\title[Continued fraction algorithms and translations]{Multidimensional continued fractions  and symbolic codings of toral translations}
\author[V.~Berth\'e]{Val\'erie Berth\'e}
\author[W.~Steiner]{Wolfgang Steiner}
\address{Université de Paris, CNRS, IRIF, F-75006 Paris, France}
\email{berthe@irif.fr, steiner@irif.fr}
\author[J. M. Thuswaldner]{J\"org M. Thuswaldner}
\address{Chair of Mathematics and Statistics, University of Leoben, A-8700 Leoben, AUSTRIA}
\email{joerg.thuswaldner@unileoben.ac.at}
\date{\today}
\keywords{Symbolic dynamical systems; symbolic codings; multidimensional continued fraction algorithms; $S$-adic dynamical systems; substitutions; Pisot conjecture;  Lyapunov exponents; Rauzy fractals; bounded remainder sets; toral translations; purely discrete spectrum}
\subjclass[2010]{37B10, 37A30, 11K50, 28A80}
\thanks{This work was supported by the Agence Nationale de la Recherche through the project CODYS (ANR-18-CE40-0007).
The third author was supported by the projects FWF P27050 and FWF P29910 granted by the Austrian Science Fund.}
\newtheorem{lemma}{Lemma}[section]
\newtheorem{theorem}[lemma]{Theorem}
\newtheorem{conjecture}[lemma]{Conjecture}
\newtheorem{proposition}[lemma]{Proposition}
\newtheorem{corollary}[lemma]{Corollary}
\newcommand{\thistheoremname}{}
\newtheorem*{genericthm}{\thistheoremname}
\newenvironment{namedthm}[1]
  {\renewcommand{\thistheoremname}{#1}%
   \begin{genericthm}}
  {\end{genericthm}}
\theoremstyle{remark}
\newtheorem{remark}[lemma]{Remark}
\newtheorem{definition}[lemma]{Definition}
\numberwithin{equation}{section}
\newcommand{\tr}[1]{{}^t #1}
\newcommand{\NN}{\mathbb{N}}
\newcommand{\ZZ}{\mathbb{Z}}
\newcommand{\QQ}{\mathbb{Q}}
\newcommand{\RR}{\mathbb{R}}
\newcommand{\TT}{\mathbb{T}}
\newcommand{\cA}{\mathcal{A}} 
\newcommand{\cC}{\mathcal{C}} 
\newcommand{\cD}{\mathcal{D}} 
\newcommand{\cF}{\mathcal{F}} 
\newcommand{\cM}{\mathcal{M}} 
\newcommand{\cR}{\mathcal{R}} 
\newcommand{\cS}{\mathcal{S}} 
\newcommand{\cL}{\mathcal{L}} 
\newcommand{\fD}{\mathfrak{D}} 
\newcommand{\sB}{{\scriptscriptstyle\mathrm{B}}} 
\newcommand{\sC}{{\scriptscriptstyle\mathrm{C}}} 
\newcommand{\sAR}{{\scriptscriptstyle\mathrm{AR}}} 
\newcommand{\sJP}{{\scriptscriptstyle\mathrm{JP}}} 
\newcommand{\bsigma}{\boldsymbol{\sigma}}
\newcommand{\bphi}{\boldsymbol{\varphi}}
\newcommand{\bpsi}{\boldsymbol{\psi}}
\newcommand{\bseta}{\boldsymbol{\eta}}
\newcommand{\btau}{\boldsymbol{\tau}}
\newcommand{\bl}{\boldsymbol{\ell}}
\newcommand{\bs}{\mathbf{s}}
\newcommand{\bu}{\mathbf{u}}
\newcommand{\bv}{\mathbf{v}}
\newcommand{\bM}{\mathbf{M}}
\newcommand{\bt}{\mathbf{t}}
\newcommand{\bw}{\mathbf{w}}
\newcommand{\bx}{\mathbf{x}}
\newcommand{\be}{\mathbf{e}}
\newcommand{\by}{\mathbf{y}}
\newcommand{\bz}{\mathbf{z}}
\newcommand{\bone}{\mathbf{1}}
\newcommand{\labitem}[2]{\def\@itemlabel{#1}\item\def\@currentlabel{#1}\label{#2}}
\begin{document} 
\begin{abstract}
It has been a long standing problem to find good symbolic codings for translations on the $d$-dimensional torus that enjoy the beautiful properties of Sturmian sequences like low factor complexity and good local discrepancy properties. Inspired by Rauzy's approach we construct such codings in terms of multidimensional continued fraction algorithms that are realized by sequences of substitutions. In particular, given any exponentially convergent continued fraction algorithm, these sequences lead to renormalization schemes which produce symbolic codings of toral translations and bounded remainder sets at all scales in a natural way. 

The exponential convergence properties of a continued fraction algorithm can be viewed in terms of a Pisot type condition imposed on an attached symbolic dynamical system. Using this fact, our approach provides a systematic way to confirm purely discrete spectrum results for wide classes of symbolic dynamical systems. Indeed, as our examples  illustrate, we are able to confirm the Pisot conjecture for many well-known families of sequences of substitutions. These examples comprise classical algorithms like the Jacobi--Perron, Brun, Cassaigne--Selmer, and Arnoux--Rauzy algorithms. 

As a consequence, we gain symbolic codings of almost all translations of the $2$-dimensional torus having factor complexity $2n+1$ that are balanced for words, which leads to multiscale bounded remainder sets. 
Using the Brun algorithm, we also give symbolic codings of almost all $3$-dimensional toral translations having multiscale bounded remainder sets.
\end{abstract}

\maketitle


\section{Introduction}

One of the classical motivations of symbolic dynamics is to provide  representations of  dynamical systems as subshifts made of infinite sequences which code itineraries through suitable choices of partitions. 
In the present paper, we focus on symbolic models for toral translations. 
More precisely, for a given toral translation, we provide symbolic realizations based on multidimensional continued fraction algorithms. 
These realizations have strong dynamical and arithmetic properties. 
In particular, they define bounded remainder sets for toral translations with a natural subdivision structure governed by the underlying continued fraction algorithm.
We recall that bounded remainder sets are defined as sets having bounded local discrepancy. 
In ergodic terms, these are sets for which the Birkhoff sums of their characteristic function have bounded deviations. 
Their study started with the work of W.~M.~Schmidt in his series of papers on  irregularities of distributions (see for instance \cite{Schmidt:74})  and has led to many important contributions; see \cite{Grepstad-Lev} for references.

Our approach is inspired by the seminal example of Sturmian dynamical systems, introduced by M.~Morse and  G.~Hedlund  in \cite{MorseHedlund:40}. 
There is an impressive literature devoted to their study and to possible generalizations in word combinatorics \cite{Fog02}, and also in digital geometry \cite{Rosenfeld}. 
The importance of Sturmian dynamical systems is due to several reasons. 
For instance, they provide symbolic codings for the simplest arithmetic dynamical systems, namely for irrational translations on the circle, they code discrete lines, and  they are one-dimensional models of quasicrystals \cite{BaakeGrimm}.
Besides that, Sturmian dynamical systems are characterized as the minimal shifts having $1$-balanced language over a two-letter alphabet \cite{MorseHedlund:40}. Balance is a classical notion in word combinatorics and symbolic dynamics that has been widely studied from many viewpoints, for instance in ergodic theory and word combinatorics (see e.g.\ \cite{Cassaigne-Ferenczi-Zamboni:00}) and in number theory in connection with Fraenkel's conjecture \cite{Fraenkel:73,Tijdeman:2000}.
The scale invariance of Sturmian dynamical systems allows them to be described by using a renormalization scheme governed by classical continued fractions, which in turn can be interpreted as Poincar\'e sections of the geodesic flow acting on the modular surface. 
This admits important generalizations in the study of interval exchange transformations in relation with the Teichm\"uller flow and renormalization schemes that can often be interpreted as continued fractions \cite{Yoccoz}. 
The basic combinatorial elements for the understanding of Sturmian dynamical systems together with their renormalization scheme are substitutions which are symbolic versions of induction steps (i.e., of first return maps).

In order to get symbolic models, in the present work we rely on substitutive dynamical systems as well as on the more general $S$-adic dynamical systems. 
A~substitution is a rule, either combinatorial or geometric, that replaces a letter by a word, or a tile by a patch of tiles. 
Substitutions are used to define substitutive dynamical systems which play a fundamental role in symbolic dynamics, as emphasized e.g.\ in the monographs \cite{BaakeGrimm,Fog02,Queffelec:10}. 
In particular, Pisot substitutions are of importance in this context since they create  hierarchical structures with a significant amount of long range order \cite{AkiBBLS}. 
Each substitutive dynamical system defined in terms of a Pisot substitution is conjectured to have purely discrete spectrum, that is, to be isomorphic (in the measure-theoretic sense) to a translation on a compact abelian group. 
The fact that this so-called Pisot substitution conjecture is still open (even though it is solved for beta-numeration in~\cite{Barge15}) shows that important parts of the picture are still to be developed.

More generally, $S$-adic dynamical systems are defined in terms of words that are generated by iterating sequences of substitutions, rather than iterating just a single substitution, much the same way like multidimensional continued fraction algorithms in general produce sequences of matrices, and not just powers of a single one. 
A~survey on $S$-adic dynamical systems is provided in \cite{Berthe-Delecroix}. 
The $S$-adic formalism offers representations similar to the Bratteli--Vershik systems related to Markov compacta, and to representations by Rohlin towers as studied for instance in \cite{DHS:99} or \cite[Chapter~6]{CANT}. 
In \cite{BST:19}, we extend the Pisot conjecture to $S$-adic dynamical systems, which enables us to go beyond algebraicity. 
Since $S$-adic dynamical systems are defined in terms of sequences of substitutions, they can be regarded as nonabelian  and combinatorial versions of multidimensional continued fraction algorithms. 
The requirement of Pisot substitutions in the substitutive case is replaced here by a more general condition, called \emph{Pisot condition}, which essentially is an exponential convergence condition imposed on this underlying continued fraction algorithm (see Section~\ref{sec:cf} for precise definitions).
Under this condition, $S$-adic dynamical systems are conjectured to have purely discrete spectrum. 
In \cite{BST:19}, we prove that this extended Pisot conjecture holds for large families of three-letter $S$-adic dynamical systems based on well-known continued fraction algorithms, such as the Brun or the Arnoux--Rauzy algorithm.
As a striking outcome, this yields symbolic codings for almost every translation of the torus~$\TT^2$ \cite{BST:19}, paving the way for the development of equidistribution results for the associated two-dimensional Kronecker sequences.      

In order to apply the results of \cite{BST:19} for a given family of $S$-adic dynamical systems, one has to check quite tedious combinatorial conditions for the involved sequences of substitutions (like the ones checked in \cite{BBJS14} in case of the Brun algorithm; see \cite[Proposition~9.7]{BST:19}). These arguments crucially relied on the topology of the plane and were thus applicable only for three-letter alphabets. This is why the results of \cite{BST:19} are not sufficient for setting up a general theory that is easy to apply for a given family of $S$-adic dynamical systems. 

In the present paper, we circumvent this problem by a new ergodic argument which ensures that the required combinatorial conditions are generically satisfied under mild and natural assumptions. This enables us to formulate results that are easily applicable to any given class of $S$-adic dynamical systems that satisfies the Pisot condition  (see Definition~\ref{d:Pisot}) on any finite alphabet. 
For instance, our new theory works for generalized continued fraction algorithms including the Arnoux--Rauzy algorithm in arbitrary dimension, the Jacobi--Perron algorithm (in dimension~$3$),  the Brun algorithm (in dimension~$4$),  and the Cassaigne--Selmer algorithm in dimension~$3$.
(Only the case of Brun and  Arnoux-Rauzy  algorithms in dimension $3$ were handled in  \cite{BST:19}.)

Another novelty we present in this paper builds on recent results from \cite{BCDLPP:19}.
In particular, we can refine the theory of bounded remainder sets established in \cite{BST:19} in the sense that bounded remainder sets  (for letters) admit natural subdivisions into subsets that form bounded remainder sets (for words).
This results in multiscale natural codings for almost all translations on the torus; see Theorem~\ref{t:nc}, whose informal version is provided in Theorem~B. 
Note that the constructions of bounded remainder sets given in \cite{Grepstad-Lev,HKK:17} do not offer such a scalability.

To each continued fraction algorithm satisfying the Pisot condition, we attach a shift-invariant set of $S$-adic sequences, which generically leads to $S$-adic dynamical systems having purely discrete spectrum. 
This shows that $S$-adic dynamical systems are measurably conjugate to minimal translations on the torus. 
In other words, we provide symbolic representations of toral translations, i.e., symbolic dynamical systems that code toral translations in the measure-theoretic sense, as well as symbolic representations for multidimensional continued fractions.  
In particular, we gain symbolic codings of almost all translations of the $2$-dimensional torus having factor complexity $2n + 1$ that are balanced for words (and not only for letters). Thus they admit bounded remainder sets at all scales; see Corollaries~C and~\ref{cor:CS}. 
Using the Brun algorithm, we also give symbolic codings of almost all 3-dimensional toral translations with bounded remainder sets for all words; see Corollaries~D and~\ref{cor:Br}.

In our results on purely discrete spectrum (see Theorems~\ref{theo:MCF} and~\ref{theo:main}, and Theorem~A for an informal version), we use two main conditions. Firstly, the above-mentioned Pisot condition  (see Definition~\ref{d:Pisot}), 
which is formulated in terms of negativity of the second Lyapunov exponent. Secondly, the existence of a single substitutive dynamical system  that ``behaves well''  and corresponds to a periodic sequence in the set of $S$-adic sequences under consideration. As mentioned above, contrary to the results in \cite{BST:19}, our results on the purely discrete spectrum of $S$-adic dynamical systems do not require combinatorial conditions which are hard to verify.  In fact, some of our results do not need any combinatorial conditions to be verified, see Theorems~\ref{theo:MCF2} and~\ref{theo:main2}.
Indeed, we can prove that each algorithm that satisfies the Pisot condition has an acceleration that leads to toral translations almost surely by using the existence of arbitrarily large blocks of Pisot substitutions in the set of $S$-adic sequences.

\begin{figure}[ht]
\includegraphics[trim= 0cm 4cm 0 5cm, clip,height=5cm]{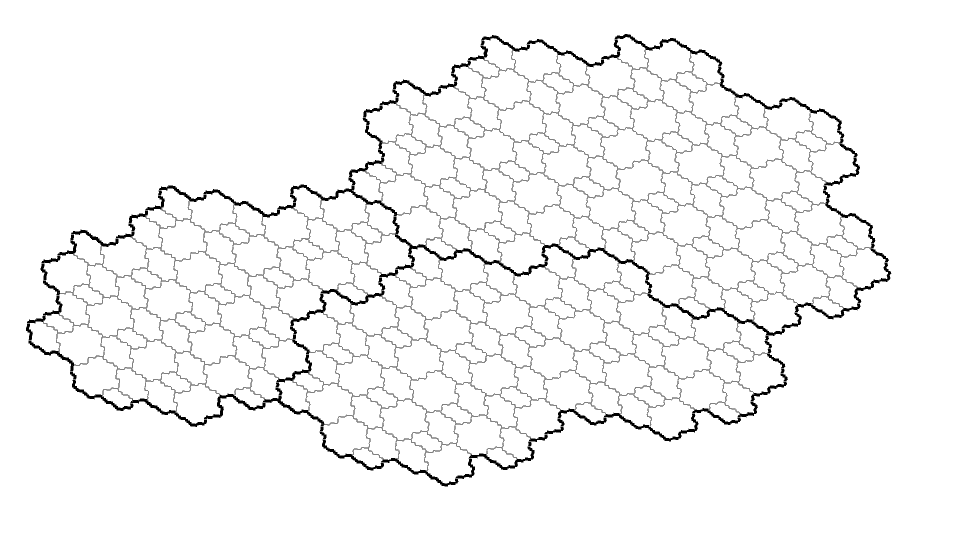}
\caption{An (affine image of an) $S$-adic Rauzy fractal and its subdivision (cf.\ Section~\ref{sec:natur-codings-bound}) whose directive sequence $(\sigma_n)_{n\in\NN}$ starts with $\sigma_0=\cdots=\sigma_7$ and $\sigma_8=\cdots=\sigma_{15}$, where $\sigma_0$, defined by $1\mapsto 13$,  $2\mapsto 12$, $3\mapsto2$, is a Cassaigne--Selmer substitution (see Section~\ref{sec:exSelmer}), and $\sigma_8$ is the classical Tribonacci substitution $1\mapsto 12$,  $2\mapsto 13$, $3\mapsto1$.} \label{fig:selmertribu}
\end{figure}

In our proofs, we also heavily rely on the theory of $S$-adic Rauzy fractals, which has been developed in \cite{BST:19}.  For an illustration of such a Rauzy fractal, see Figure~\ref{fig:selmertribu}. 
Rauzy fractals have been introduced in \cite{Rauzy:82} for the so-called Tribonacci substitution; see also \cite{Thu89}.  
One motivation for Rauzy's construction was to exhibit explicit factors of the substitutive dynamical system as translations on compact abelian groups, under the Pisot hypothesis.
The formalism of $S$-adic Rauzy fractals allows us to verify the Pisot conjecture on sequences of substitutions for wide families of systems satisfying the Pisot condition, thereby extending the results in \cite{BST:19,Fogg:20}; see Theorems~\ref{theo:MCF} and~\ref{theo:main}.  
Already in \cite{BST:19}, for the Brun  algorithm as well as the Arnoux--Rauzy algorithm, purely discrete spectrum results have been shown. 
Parallel to our work, \cite{Fogg:20} proved results on purely discrete spectrum of $S$-adic dynamical systems coming from continued fraction algorithms with special emphasis on the Cassaigne--Selmer algorithm. 
However, the conditions we have to assume in our main results are easy to check effectively and our results (stated in Section~\ref{sec:main-results}) are more general than the ones in \cite{BST:19,Fogg:20}.  
This allows us to treat the Arnoux--Rauzy algorithm in arbitrary dimensions as well as multiplicative continued fraction algorithms like the Jacobi--Perron algorithm (which requires to work  with $S$-adic dynamical systems based on infinitely many substitutions).  

\bigskip

In order to state our results in full mathematical precision, we require several concepts and notation that will be introduced in Section~\ref{sec:land}. Nevertheless, for the convenience of the reader, we provide already here an informal ``prototype'' of our theorems on purely discrete spectrum; for the exact statements, we refer to Theorems~\ref{theo:MCF},  \ref{theo:MCF2}, \ref{theo:main}, and~\ref{theo:main2}.

\begin{namedthm}{Theorem A}
If $(D,\nu)$ is an $S$-adic version of a $(d{-}1)$-dimensional continued fraction algorithm satisfying the Pisot condition, then, under mild conditions that are easy to check, the $S$-adic dynamical system $(X_{\bsigma},\Sigma)$ has purely discrete spectrum for $\nu$-almost every $\bsigma\in D$. 
Moreover, $X_{\bsigma}$ is a bounded natural coding of an explicitly given translation on $\TT^{d-1}$.
\end{namedthm}

The next result, which can be considered as a partial  converse of Theorem A, is an informal statement of Theorem~\ref{t:nc}, which shows that $S$-adic Rauzy fractals essentially are the only candidates of bounded remainder sets for $S$-adic dynamical systems.

\begin{namedthm}{Theorem B}
Assume that the $S$-adic dynamical system $(X,\Sigma)$ is the natural coding of a minimal translation~$R_{\bt}$ on~$\TT^{d-1}$ w.r.t.\ a partition $\{\cF_1,\dots,\cF_d\}$ of a bounded fundamental domain of~$\TT^{d-1}$. Then the sets  $\cF_1,\dots,\cF_d$ are affine images of $S$-adic Rauzy fractals. Moreover, they are bounded remainder sets of~$R_{\bt}$ for letters (and, under some properness condition, also for words).  
\end{namedthm}

When we apply these theorems to concrete examples in Section~\ref{sec:ex}, we will see that they have several consequences. We want to mention two of these consequences already here; see Corollaries~\ref{cor:CS} and~\ref{cor:Br}.

\begin{namedthm}{Corollary C}
Almost every rotation (w.r.t.\ Lebesgue measure) of the $2$-torus $\TT^2$ has a natural coding by a subshift of three letters of complexity $2n+1$ that is balanced for words. The associated bounded remainder sets for letters and words are the (bounded) $S$-adic Rauzy fractals corresponding to the Cassaigne--Selmer algorithm. 
\end{namedthm}

\begin{namedthm}{Corollary D}
Almost every rotation (w.r.t.\ Lebesgue measure) of the $3$-torus~$\TT^3$ has a natural coding by a subshift of four letters that is balanced for words. The associated bounded remainder sets for letters and words are the (bounded) $S$-adic Rauzy fractals corresponding to the Brun algorithm. 
\end{namedthm}

As applications for our results, we want to mention the recent paper  \cite{CDFG}, where our present results are used in the framework of Schr\"odinger operators with quasi-periodic multi-frequency potentials based on toral translations. 
In particular, they use our theory to produce Cantor spectra of zero Lebesgue measure for these potentials.
Moreover, we are currently considering higher-dimensional versions of the three-distance theorem in \cite{ABKST} where the  involved shapes are generated by  symbolic and geometric versions of continued fraction algorithms (related again to $S$-adic Rauzy fractals). Note that there   have been recently several  major advances  on   higher-dimensional  distance  theorems, such as \cite{BK:18,HM:20,HM:20bis,HR:20}. We also mention that sequences with good properties of balance are used in operations research, for optimal routing and scheduling (see e.g.\ \cite{AltmanGH:00,BraunerCrama:04,BraunerJost:08}).

More generally, we would like to deduce global discrepancy estimates for multidimensional Kronecker sequences from the local study of bounded remainder sets and thanks to the symbolic codings considered here. This is in the spirit of the one-dimensional results obtained in \cite{Adamczewski:04}. In \cite{sadic3}, we also consider Markov partitions for nonstationary hyperbolic toral automorphisms (as defined in \cite{Arnoux-Fisher:05}) related to continued fraction algorithms. 
We thereby develop symbolic models as nonstationary subshifts of finite type and Markov partitions for sequences of toral automorphisms.
The pieces of the corresponding Markov partitions are fractal sets (and more precisely $S$-adic Rauzy fractals) defined by associating substitutions to (incidence) matrices, or in terms of Bratteli diagrams, obtained by constructing suspensions via two-sided Markov compacta \cite{Bufetov:14b}.     

In the present paper, we are dealing exclusively with results that hold for almost all parameters (with respect to a given measure). However, similarly to the examples on the Arnoux--Rauzy algorithm in \cite[Theorem~3.8 and Corollary~3.9]{BST:19}, it is possible to produce concrete families of $S$-adic dynamical systems having purely discrete spectrum (characterized e.g.\ by properties of their partial quotients or by recurrence properties) for other continued fraction algorithms as well. According to \cite[Theorem~3.1]{BST:19}, their study involves the investigation of combinatorial properties of the underlying sequences of substitutions. Other explicit examples are   provided  by  $S$-adic systems related to a constant sequence given by the repetition of a single Pisot substitution. We end up with a substitutive dynamical system for such examples and for this class of parameters, there exist many algorithms for checking purely discrete spectrum; see e.g.~\cite{CANTBST} or Section~\ref{subsec:ba}
 below. Besides that,  given any Pisot matrix,  we show how to construct Pisot substitutions giving rise to substitutive dynamical systems with purely discrete spectrum for large enough powers of  this   Pisot matrix; we refer to Section~\ref{sec:proof-theor-refth},  and in particular, to Proposition~\ref{p:sigmatilde}.

\subsection*{Outline of the paper} 
After recalling basic notation and definitions in Section~\ref{sec:land}, Section~\ref{sec:main-results} is devoted to the precise statement of our main results on purely discrete spectrum including their consequences on natural codings of translations and bounded remainder sets. 
The concepts needed in the proofs of our results are provided in Section~\ref{sec:defprep}. 
In particular, we recall the required background on Rauzy fractals. 
These proofs are then given in Section~\ref{sec:proofs}. 
Section~\ref{sec:ex} is devoted to the detailed discussion of some examples which provide codings of a.e.\ translation on~$\TT^2$ and~$\TT^3$ that lead to bounded remainder sets of all scales.

\section{Mise en sc\`ene}\label{sec:land}
\subsection{Multidimensional continued fraction algorithms}\label{sec:cf} There are several formalisms for defining  multidimensional continued fractions,
 see e.g.\ \cite{AL18,BRENTJES,BAG:01,KLM:07,Lagarias:93,Lagarias:94,SCHWEIGER}.
In the present paper, a \emph{$(d{-}1)$-dimensional continued fraction algorithm} $(\Delta,T,A)$ is defined on a set
\[
\Delta \subseteq \{\bx \in [0,1]^d \,:\, \|\bx\|_1 = 1\}
\]
by a map 
\[
A:\, \Delta \to \mathrm{GL}(d,\ZZ)
\]
satisfying $\frac{\tr{\!A}(\bx)^{-1} \bx}{\|\tr{\!}A(\bx)^{-1} \bx\|_1}\in \Delta$ for all $\bx \in \Delta$, together with the associated  transformation
\begin{equation}\label{eq:Tdef}
T:\, \Delta \to \Delta, \quad \bx \mapsto \frac{\tr{\!A}(\bx)^{-1} \bx}{\|\tr{\!}A(\bx)^{-1} \bx\|_1}.
\end{equation}
Here $\tr{\!M}$ denotes the transpose of a matrix~$M$. The map $A$ is usually piecewise constant which entails that $T$ is piecewise continuous. These algorithms are called \emph{linear simplex-splitting} in \cite[Section~2]{Lagarias:93}, and their iteration produces \emph{convergent matrices} used for simultaneous Diophantine approximation. The matrices $\tr{\!}A(\bx)$  are called \emph{partial quotient matrices}. 
This class of algorithms contains prominent examples like the classical algorithms of Brun~\cite{Brun19,Brun20,BRUN}, Jacobi--Perron \cite{Bernstein:71,Heine1868,Perron:07,Schweiger:73}, and Selmer~\cite{Selmer:61}, which are discussed in Section~\ref{sec:ex}. When we refer to these  well-known continued fraction algorithms we will often informally talk about the {\em classical continued fraction algorithms}. 

In the present paper the transition from the linear homogeneous version of the algorithm given by the piecewise linear map $\bx\mapsto \tr{\!A}(\bx/\|\bx\|_1)^{-1} \bx$ to its projectivized version \eqref{eq:Tdef} is performed by a normalization by the $1$-norm. 
This choice allows working with a symmetric version of the algorithm, as e.g.\ in \cite{AL18}.

A~multidimensional continued fraction algorithm $(\Delta,T,A)$ is called \emph{positive} if $A(\bx)$ is a nonnegative matrix for all $\bx \in \Delta$, i.e., if $A(\Delta)$ is contained in 
\[
\cM_d = \{M \in \NN^{d\times d} \,:\, |\!\det M| = 1\},
\]
with $\NN = \{0,1,2,\ldots\}$. 
It is \emph{additive} if  the set of produced  matrices  $A(\Delta)$  is finite, \emph{multiplicative} otherwise.
Setting 
\[
A^{(n)}(\bx) = A(T^{n-1}\bx) \cdots \,A(T\bx)\,A(\bx),
\]
$A$~is a \emph{linear cocycle} for~$T$, i.e., it fulfills the cocycle property $A^{(m+n)}(\bx) = A^{(m)}(T^n \bx) A^{(n)}(\bx)$; this is the reason for defining $T$ by the transpose of~$A$. 

The column vectors $\by^{(n)}_i$, $1 \le i \le d$, of the convergent matrices  $\tr{\!}A^{(n)}(\bx)$ produce $d$ sequences of \emph{rational convergents} $(\by^{(n)}_i/\|\by^{(n)}_i\|_1)_{n\in\NN}$ that are supposed to converge to~$\bx$. More precisely, 
\begin{itemize}
\item
$T$ converges \emph{weakly} at $\bx\in\Delta$  if $\lim_{n\to\infty} \by^{(n)}_i/\|\by^{(n)}_i\|_1 = \bx$ holds for all $i\in\{1,\ldots,d\}$;
\item
$T$ converges \emph{strongly} at $\bx\in\Delta$ if $\lim_{n\to\infty}   \|\by^{(n)}_i  -  \|\by^{(n)}_i\|_1 \,\bx\| =0$ holds\footnote{We indicate which norm we use only if the choice of the norm is relevant. Here, $\|\cdot\|$ can be any norm in~$\RR^d$.}
for all $i\in\{1,\ldots,d\}$;
\item
$T$ converges \emph{exponentially} at $\bx\in\Delta$ if there are positive constants $\kappa,\delta\in\RR$ such that 
$\|\by^{(n)}_i  -  \|\by^{(n)}_i\|_1 \,\bx\| < \kappa e^{-\delta n}$ holds for all $i\in\{1,\ldots,d\}$ and all $n\in\NN$.
\end{itemize}
An important role is played by the following condition, which is essentially equivalent to almost everywhere exponential convergence of the algorithm. 

\begin{definition}[Pisot condition, cf.\ \cite{Berthe-Delecroix,BST:19}] \label{d:Pisot}
Let $(X,T,\nu)$ be a dynamical system with ergodic invariant probability measure~$\nu$, and let $C: X\to \cM_d$ be a log-integrable linear cocycle for~$T$; here log-integrable means that $\int_X \log \max(1,\|C(x)\|)\, \mathrm{d}\nu(x) < \infty$.
Then the Lyapunov exponents $\vartheta_k(C)$ of~$C$ exist and are given for $k \in \{1,\ldots,d\}$ by
($\wedge^k$ denotes the $k$-fold exterior product)
\[
\vartheta_1(C) + \cdots + \vartheta_k(C) = \lim_{n\to\infty} \frac{1}{n} \log \|\wedge^k C(T^{n-1} x) \cdots \,C(T x)\,C(x)\| \quad \mbox{for $\nu$-almost all $x \in X$}.
\]
We say that $(X,T,C,\nu)$ satisfies the \emph{Pisot condition} if
$\vartheta_1(C) > 0 > \vartheta_2(C)$. 
\end{definition}

We always assume that the continued fraction algorithm $(\Delta,T,A)$ is endowed with an ergodic $T$-invariant probability measure~$\nu$ such that the map~$A$ is $\nu$-measurable; here $\mathrm{GL}(d,\ZZ)$ carries the discrete topology. 
Then the Pisot condition together with the Oseledets theorem (see e.g.~\cite[Theorem~3.4.1]{Arnold:98}) implies that there is a constant $\delta < 0$ such that, for $\nu$-almost all $\bx \in \Delta$, there is a hyperplane $V$ of $\RR^d$ with
\[
\lim_{n\to\infty} \frac{1}{n} \log \|A^{(n)}(\bx)\, \bv\| \le \delta \quad \mbox{for all}\ \bv \in V.
\]
According to Lagarias~\cite[Theorem~4.1]{Lagarias:93} the Pisot condition is equivalent to a.e.\  exponential convergence of $(\Delta,T,A)$ under some natural conditions called (H1) -- (H5) that are introduced in \cite[Section~4]{Lagarias:93}. These conditions are true in many cases; see e.g.\ \cite{berth2021second}. In the present paper, we will only rely on the Pisot condition; the relation between the Pisot condition and exponential convergence will not be used. Thus we do not go into details.

\subsection{Substitutive and $S$-adic dynamical systems, shifts of directive sequences}
Substitutions will be very important objects in our constructions. 
Let $\cA=\{1,2,\ldots,d\}$ be a finite ordered alphabet and let $\sigma: \cA^* \to \cA^*$ be an endomorphism of the free monoid $\cA^*$ of words over~$\cA$, which is equipped with the operation of concatenation. 
If $\sigma$ is \emph{nonerasing}, i.e., if $\sigma$ does not map a nonempty word to the empty word, then we call $\sigma$ a \emph{substitution} over the alphabet~$\cA$. A word~$w$ is a \emph{factor} of a word~$v$ if there exist words $p,s$ such that $v = pws$.
Moreover, if $p$ is the empty word, then $w$ is a \emph{prefix} of~$v$,  which will often be denoted by $w\preceq v$; we write $w\prec v$ when $w\preceq v$ and $w \ne v$.
On the space $\cA^\NN$ of one-sided infinite sequences over $\cA$ (equipped with the product topology of the discrete topology on $\cA$), the notions of \emph{factor} and \emph{prefix} are defined in a similar  way. With the substitution~$\sigma$ we associate the language
\[
L_\sigma= \big\{w\in \cA^* \,:\, \mbox{$w$ is a factor of $\sigma^n(i)$ for some $i\in\cA$, $n\in \NN$}\big\},
\]
i.e.,  $L_\sigma$~is the set of words that occur as subwords in iterations of~$\sigma$ on a letter of~$\cA$. 
Using the language~$L_\sigma$, the \emph{substitutive dynamical system} $(X_\sigma,\Sigma)$ is defined by
\[
X_\sigma=\{\omega \in \cA^\NN \,:\, \mbox{each factor of $\omega$ is contained in $L_\sigma$}\},
\]
with $\Sigma$ being the \emph{shift map} $(\omega_n)_{n\in\NN} \mapsto (\omega_{n+1})_{n\in\NN}$;\footnote{We denote the shift map on any 
space of sequences by~$\Sigma$; this should not cause any confusion.} $X_\sigma$ is obviously $\Sigma$-invariant. 

 The abelianized counterpart of a substitution $\sigma$ is its so-called \emph{incidence matrix} 
\[
M_\sigma=(|\sigma(j)|_i)_{1\le i,j\le d},
\]
where $|w|_i$ denotes the number of occurrences of a letter $i\in \cA$ in the word $w\in\cA^*$. The \emph{abelianization} of a word $w\in\cA^*$ is $\bl(w) = \tr{}(|w|_1,\ldots,|w|_d)$, so that $\bl(\sigma(w)) = M_\sigma \bl(w)$. 

Many properties of a substitution depend on its incidence matrix.
Indeed, while $M_\sigma$ ``forgets'' the combinatorics of $\sigma$, it encodes letter frequencies and convergence properties of the sequences of~$X_\sigma$. 
So-called \emph{unimodular Pisot substitutions}, which are characterized in terms of incidence matrices, have received particular interest: A~unimodular Pisot substitution is a substitution~$\sigma$ whose incidence matrix~$M_\sigma$ has a characteristic polynomial which is the minimal polynomial of a Pisot unit. Recall that a Pisot unit is an algebraic integer greater than~$1$ whose norm equals $\pm 1$ and whose Galois conjugates are all contained in the open unit disk. 
For example, if $\sigma$ is unimodular Pisot, then we can infer that the elements of $X_\sigma$ are balanced in the sense defined in Section~\ref{subsec:prop}; see e.g.\ \cite[Theorem~1]{Adamdis}. Moreover, a unimodular Pisot substitution $\sigma$ is \emph{primitive} in the sense that its incidence matrix admits a positive power. This implies that the associated symbolic dynamical system $(X_\sigma,\Sigma)$ is \emph{minimal} (i.e., $X_\sigma$ has no nontrivial closed shift-invariant subset); see e.g.\ \cite{Queffelec:10}. 
Throughout this paper we will assume that the incidence matrix of a substitution $\sigma$ is \emph{unimodular}, i.e., we consider the set of substitutions
\[
\cS_d = \{\sigma \,:\, \mbox{$\sigma$ is a substitution over $\cA=\{1,\ldots,d\}$},\, M_\sigma \in \cM_d\}.
\]
When we discuss sequences $(\sigma_n)_{n\in\NN}$ of unimodular substitutions later, considering the linear cocycle $(\sigma_n)_{n\in\NN} \mapsto \tr{\!M_{\sigma_0}}$ will enable us to study the convergence behavior of $(\sigma_n)_{n\in\NN}$. Here the Pisot condition (see Definition~\ref{d:Pisot}), which is also a condition on incidence matrices in this setting, will be of particular importance for us.

Substitutive dynamical systems (and related tiling flows) have been studied extensively in the literature with special emphasis on unimodular Pisot substitutions; see for instance \cite{baake2019fourier,BufSol18,Fog02,Queffelec:10}. The main conjecture in this context, the so-called \emph{Pisot substitution conjecture}, claims that, for each unimodular Pisot substitution~$\sigma$, the substitutive dynamical system $(X_\sigma,\Sigma)$ is measurably conjugate to a minimal translation on the torus~$\TT^{d-1}$, and, hence, has purely discrete spectrum. Although there are many partial results (see e.g.\ \cite{AkiBBLS,Barge:16,Barge15,HolSol:03,mercat2018characterization}), this conjecture is still open. However, given a single unimodular Pisot substitution~$\sigma$, there are many algorithms that can be used to verify that $(X_\sigma,\Sigma)$ has purely discrete spectrum; see \cite{AL11,CANTBST,mercat2018characterization,SS:02}. 
Thus, for each single unimodular Pisot substitution~$\sigma$, this property is easy to check, which is important for us.

To be more precise, in the present paper, unimodular Pisot substitutions are of importance because of their relation to  multidimensional continued fraction algorithms that satisfy the Pisot condition. Indeed, we show that wide classes of  symbolic dynamical systems of Pisot type  are measurably conjugate to minimal translations on the torus, provided that the same is true for a particular Pisot unimodular substitutive element of the class; see Theorem~\ref{theo:main}.

The concept of $S$-adic dynamical system constitutes a generalization of substitutive dynamical systems; see for instance \cite{Arnoux-Mizutani-Sellami,sadic3,Berthe-Delecroix,BST:19,thuswaldner2019boldsymbolsadic}, where $S$-adic dynamical systems are studied in a similar context as in the present paper. 
An $S$-adic dynamical system is defined in terms of a sequence $\bsigma=(\sigma_n)_{n\in \NN}$ of substitutions over a given alphabet~$\cA$ in a way that is analogous to the definition of a substitutive dynamical system. In particular, let 
\[
L_{\bsigma}= \big\{w \in \cA^* \,:\, \mbox{$w$ is a factor of $\sigma_{[0,n)}(i)$ for some $i \in \cA$,  $n \in \NN$}\big\},
\]
be the language associated with~$\bsigma$, with
\[
\sigma_{[k,n)} = \sigma_k \circ \sigma_{k+1} \circ \cdots \circ \sigma_{n-1} \qquad (0\le k\le n).
\]
Then the \emph{$S$-adic dynamical system} $(X_{\bsigma},\Sigma)$ is defined by setting
\[
X_{\bsigma}=\{\omega \in \cA^\NN \,:\, \mbox{each factor of $\omega$ is contained in $L_{\bsigma}$}\}.
\]
The sequence~$\bsigma$ is called a \emph{directive sequence} of $(X_{\bsigma},\Sigma)$.
Note that the $S$-adic dynamical system of a periodic directive sequence $(\sigma_0,\ldots,\sigma_{n-1})^\infty$ is equal to the substitutive dynamical system $(X_{\sigma_{[0,n)}},\Sigma)$. 

We say that a directive sequence~$\bsigma$ has \emph{purely discrete spectrum} if the system $(X_{\bsigma},\Sigma)$ is \emph{uniquely ergodic} (i.e., it has a unique shift-invariant measure~$\mu$), minimal, and has purely discrete measure-theoretic spectrum (i.e., the measurable eigenfunctions of the \emph{Koopman operator} $U_T: L^2(X_{\bsigma},\Sigma,\mu) \to L^2(X_{\bsigma},\Sigma,\mu)$, $f \mapsto f \circ \Sigma$, span $L^2(X_{\bsigma},\Sigma,\mu))$.

There is a tight link between $S$-adic dynamical systems and continued fraction algorithms. 
For the classical continued fraction algorithm, this is worked out in detail in \cite{Arnoux-Fisher:01,Arnoux-Fisher:05}; for multidimensional continued fractions algorithms, see for instance~\cite{BST:19,thuswaldner2019boldsymbolsadic}. 
Indeed, for each given vector, a~continued fraction algorithm creates a sequence of partial quotient matrices.
If these matrices are nonnegative and integral (i.e., if the algorithm is positive), they can be regarded as incidence matrices of a  directive sequence of substitutions of an $S$-adic dynamical system.
In fact, a continued fraction algorithm produces a whole shift of sequences of matrices, depending on the vector that has to be approximated.
The matrices are taken from a (finite or infinite) set~$\cM$ depending on the algorithm. 
While for some algorithms, all sequences in~$\cM^\NN$ occur as sequences of partial quotient matrices (as is the case for instance for the Brun and Selmer algorithms), other algorithms (like the Jacobi--Perron algorithm) impose some restrictions on these \emph{admissible sequences}, which are usually given by a finite type condition. 
As a further illustration, in the formalization of multidimensional continued fraction algorithms as Rauzy induction type algorithms developed in \cite{CN:13,Fougeron:20},
inspired by interval exchanges, finite graphs allow  to formalize admissibility conditions. 
Here, we do not need to restrict ourselves to such finite type admissibility conditions and we work with shift-invariant sets of directive sequences such as formalized below.
We will come back to the notion of admissibility in Section~\ref{sec:further-definitions}.
 
Assume throughout the paper that the space~$\cS_d^{\NN}$ of sequences over the substitutions~$\cS_d$ carries the product topology of the discrete topology on~$\cS$.  Let $D \subset \cS_d^{\NN}$ be a \emph{shift-invariant set of directive sequences} (which is not to be confused with the $S$-adic shift $(X_{\bsigma},\Sigma)$ of a single directive sequence $\bsigma \in D$); note that we do not require $D$ to be closed.
We define the \emph{linear cocycle}~$Z$ over $(D,\Sigma)$~by
\[
Z:\, D \to \cM_d, \qquad (\sigma_n)_{n\in\NN} \mapsto \tr{\!M}_{\sigma_0};
\] 
recall that $M_\sigma$ is the incidence matrix of~$\sigma$.
Analogously to the linear cocycle~$A$, we  define
\begin{equation}\label{eq:codef}
Z^{(n)}(\bsigma) = Z(\Sigma^{n-1} \bsigma) \cdots Z(\Sigma \bsigma) Z(\bsigma),
\end{equation}
so that $Z^{(n)}(\bsigma) = \tr{\!}M_{\sigma_{n-1}} \cdots \tr{\!}M_{\sigma_1} \tr{\!}M_{\sigma_0} = \tr{\!}M_{\sigma_{[0,n)}}$.
As mentioned before, this cocycle will be important in order to study convergence properties of the $S$-adic dynamical system $(X_{\bsigma},\Sigma)$. Indeed, we have under mild conditions (see Section~\ref{subsec:prop}) that
\begin{equation}\label{eq:u}
\bigcap_{n\in\NN} M_{\sigma_0} M_{\sigma_1} \cdots M_{\sigma_{n-1}} \RR^d_+ = \RR_+\bu 
\end{equation}
for some vector $\bu \in \RR_+^d$, which  is called a \emph{generalized right eigenvector} of~$\bsigma$ (or of $(M_{\sigma_n})_{n\in\NN}$) and can be seen as the generalization of the Perron--Frobenius eigenvector of a primitive matrix. 
Moreover, we wish to carry over the property of a substitution being Pisot in the substitutive case to this more general setting. 
This will be done by imposing the Pisot condition in Definition~\ref{d:Pisot} on the Lyapunov exponents of the cocycle $(D,\Sigma,Z,\nu)$ for a 
convenient $\Sigma$-invariant Borel measure~$\nu$.
Thus we do not consider a single sequence~$\bsigma$ but the behavior of $\nu$-almost all sequences in~$D$.

Finally, recall that in general a~\emph{shift} (or equivalently, a~\emph{symbolic dynamical system}) is a closed and shift-invariant set~$Y$ of sequences $\omega \in \cA^\NN$ over some alphabet~$\cA$. 
The \emph{language} $L$ of~$Y$ is the set of all factors of the sequences in~$Y$.
The \emph{factor complexity} of  $L$ (or of $Y$) is given by
\begin{equation}\label{eq:factorcx}
p_L:\,\NN \to \NN, \quad n \mapsto \#\{v \in L \,:\, v\ \mbox{has length}\ n\}.
\end{equation}

\subsection{$S$-adic shifts given by continued fraction algorithms}\label{sec:s-adic-shifts}
Our goal is to set up symbolic realizations of positive continued fraction algorithms, which in turn will provide symbolic models of toral translations, in a way that is described in Section~\ref{sec:natur-codings-bound} below. 
To this end, for a given multidimensional continued fraction algorithm $(\Delta,T,A)$, we associate with each $\bx \in \Delta$ a sequence of substitutions $\bsigma = (\sigma_n)_{n\in\NN} \in \cS_d^{\NN}$ with generalized right eigenvector~$\bx$.
In particular, given $\bx \in \Delta$, we regard the partial quotient matrices $\tr{\!}A(T^n \bx)$ as incidence matrices of substitutions, i.e., for each $n \in \NN$ we choose $\sigma_n$ with incidence matrix $M_{\sigma_n}=\tr{\!}A(T^n \bx)$. 
This obviously implies that $M_{\sigma_{[0,n)}} = \tr{\!}A^{(n)}(\bx)$.

\begin{definition}[$S$-adic realization] \label{d:realization}
We call a map $\varphi: \Delta \to \cS_d$ a \emph{substitution selection} for a positive $(d{-}1)$-dimensional continued fraction algorithm $(\Delta,T,A)$ if the incidence matrix of $\varphi(\bx)$ is equal to $\tr{\!A}(\bx)$ for all $\bx \in \Delta$.  The corresponding \emph{substitutive realization} of $(\Delta,T,A)$ is the map 
\[
\bphi:\, \Delta \to \cS_d^{\NN}, \quad \bx \mapsto (\varphi(T^n\bx))_{n\in\NN},
\]
together with the shift $(\bphi(\Delta),\Sigma)$.
For any $\bx \in \Delta$, the sequence $\bphi(\bx)$ is called an \emph{$S$-adic expansion} of~$\bx$, and $(X_{\bphi(\bx)}, \Sigma)$ is called the \emph{$S$-adic dynamical system of~$\bx$ w.r.t.\ $(\Delta,T,A,\varphi)$}.

If $\varphi(\bx) = \varphi(\by)$ for all $\bx, \by \in \Delta$ with $A(\bx) = A(\by)$, then $\varphi$ is called a \emph{faithful substitution selection} and $\bphi$ is a \emph{faithful substitutive realization}. 
\end{definition}

Note that the diagram 
\begin{equation}\label{eq:diagphis}
\begin{tikzcd}
\Delta \arrow[r, "T"]\arrow[d,"\bphi"] & \Delta \arrow[d, "\bphi"] \\
\bphi(\Delta) \arrow[r, "\Sigma"]& \bphi(\Delta) 
\end{tikzcd}
\end{equation}
commutes.
If $T$ converges weakly at~$\bx$ for $\nu$-almost all $\bx \in \Delta$ (w.r.t.\ a measure $\nu$ having the properties determined in Section~\ref{sec:cf}), then the dynamical system $(\Delta,T,\nu)$ is measure-theoretically isomorphic to its substitutive realization, which we write as 
\begin{equation} \label{eq:phi}
(\Delta,T,\nu) \overset{\bphi}{\cong} (\bphi(\Delta),\Sigma,\nu\circ\bphi^{-1}).
\end{equation}

\bigskip
The following definition will play a crucial role in the sequel.  
A~\emph{Pisot matrix} is an integer matrix with characteristic polynomial equal to the minimal polynomial of a Pisot number, and a \emph{Pisot substitution} is a substitution whose incidence matrix is a Pisot matrix.\footnote{We stress the fact that in this paper we mainly work with \emph{unimodular} Pisot substitutions and matrices.}

\begin{definition}[Pisot sequence and point] \label{d:Pisotpoint}
A~sequence $(M_n) \in \cM_d^{\NN}$ [$(\sigma_n) \in \cS_d^{\NN}$] is called a \emph{periodic Pisot sequence} if there is an $k \ge 1$ such that the sequence has period~$k$ and $M_0 M_1 \cdots M_{k-1}$ is a Pisot matrix [$\sigma_0 \circ \sigma_1 \circ \cdots \circ \sigma_{k-1}$ is a Pisot substitution].

For a multidimensional continued fraction algorithm $(\Delta,T,A,\nu)$, 
we say that $\bx_0 \in \Delta$ is a \emph{periodic Pisot point} if there is an $k \ge 1$ such that $T^k(\bx_0)=\bx_0$ and $A^{(k)}(\bx_0)$ is a Pisot matrix. 
\end{definition}

We also  need  to recall the notion of properness. 
A~substitution $\sigma$ over~$\cA$ is \emph{left [right] proper} if there exists $j \in \cA$ such that $\sigma(i)$ starts [ends] with $j$ for all $i \in \cA$.
A~sequence of substitutions $\bsigma = (\sigma_n)$ is \emph{left [right] proper} if for each $k\in \NN$ there exists $n > k$ such that $\sigma_{[k,n)}$ is left [right] proper.
It is \emph{proper} if it is both left and right proper.\footnote{We mention that, in previous papers, a sequence of substitutions $(\sigma_n)$ is called proper if each substitution $\sigma_n$ is proper, see for instance~\cite{Durand:00b,BCDLPP:19}. For our purposes, the weaker definition stated before is sufficient, i.e., for each $k\in \NN$ there exists $n > k$ such that $\sigma_{[k,n)}$ is  proper.  Via telescoping,  the definition used  in the present paper amounts  to the definition which requires   each substitution $\sigma_n$ to be  proper.}
Properness is a natural assumption introduced in \cite{DHS:99} in order to relate Bratteli--Vershik systems associated with stationary, properly ordered Bratteli  diagrams with substitutive dynamical systems. In the present paper, we will use \cite[Corollary~5.5]{BCDLPP:19} which states that if a primitive unimodular proper $S$-adic shift $(X_{\bsigma},\Sigma)$ is balanced for letters, then it is also balanced for words (see Sections~\ref{subsec:prop} and~\ref{subsec:balancewords} for definitions).  Telescoping   a directive sequence $(\sigma_n)$  means  the following  (this is also called blocking):  we consider a directive sequence  of the form $(\sigma_{[k_n, k_{n+1})} )$ for some strictly increasing sequence $(k_n)$.  Directive sequences are not assumed to take finitely many values in 
\cite{BCDLPP:19} hence, up to telescoping, we can use \cite[Corollary~5.5]{BCDLPP:19} with the present definition of properness.

\subsection{Natural codings, bounded remainder sets, and Rauzy fractals} \label{sec:natur-codings-bound}
In this section, we introduce some terminology related to symbolic codings of toral translations with respect to finite partitions; see~\cite{Chevallier} for more details
and also \cite[Section III]{AndrieuPhD}.
For $\bt \in \RR^d$, we consider the translation 
\[
R_\bt:\, \TT^d \to \TT^d, \quad \bx \mapsto \bx + \bt \pmod{\ZZ^d}
\]
on $\TT^d = \RR^d/\ZZ^d$. 
We assume that $\bt=(t_1, \dots, t_d)$ is \emph{totally irrational} in the sense that $1, t_1, \dots, t_d$ are rationally independent. 
This implies that $R_\bt$ is minimal and uniquely ergodic. 

We want to provide symbolic codings of~$R_\bt$ with respect to a given finite partition. 
There are many possible codings, and the simplest partitions, using polytopes for example, do not give the best results in terms of  multiscale bounded remainder sets.   
We rather consider partitions of a fundamental domain of~$\TT^d$ which are chosen in a way that on each atom the map~$R_\bt$ is a translation by a vector. 
This induces an exchange of domains on this fundamental domain and leads to the notion of natural partition and natural coding, which we describe now. 

\begin{definition}[Natural partition]\label{def:naturalpartition}
A~\emph{measurable fundamental domain} of $\TT^d$ is a set $\cF \subset \RR^d$ with Lebesgue measure~$1$ that satisfies $\cF + \ZZ^d = \RR^d$.
A~collection $\{\cF_1,\ldots, \cF_h\}$ is said to be a \emph{natural partition}\footnote{This is a partition up to  zero measure sets.} of~$\cF$ with respect to~$R_\bt$ if  
\begin{itemize}
\item
$\bigcup_{i=1}^h \cF_i = \cF$;
\item
the (Lebesgue) measure of $\cF_i \cap \cF_j$ is zero for all $i \ne j$, $1\le i,j \le h$;
\item
each set $\cF_i$, $1\le i \le h$, is the closure of its interior and has boundary of measure zero;
\item
there exist vectors $\bt_1, \dots, \bt_h$ in~$\RR^d$ such that $\bt_i+ \cF_i \subset \cF$ with  $\bt_i \equiv \bt \pmod{\ZZ^d}$, $1 \le i \le h$. 
\end{itemize}
A~natural partition is called \emph{bounded} if the set $\cF$ is bounded. 
\end{definition}

A~natural partition $\{\cF_1,\ldots, \cF_h\}$ of a measurable fundamental domain~$\cF$ of~$\TT^d$ allows to define a.e.\ on~$\cF$ a map $\tilde{R}_\bt:\, \cF \to \cF$ as an \emph{exchange of domains} (which depends on the partition) by $\tilde{R}_{\bt}(\bx)= \bx + \bt_i$ whenever $\bx \in \mathring{\cF_i}$.  
The map~$\tilde{R}_\bt$ is defined on $\cF \setminus \bigcup _{i=1}^h \partial \cF_i $, hence, it is defined almost everywhere. 
The  dynamical system $(\cF,\tilde{R}_{\bt},\lambda|_\cF)$, where $\lambda$ denotes the Lebesgue measure, is 
measurably isomorphic to  $(\TT^d, R_\bt)$ (endowed with the Haar measure). 
One has for a.e.\ $\bx \in \cF$, $ \tilde{R}_\bt(\bx) \equiv  R_\bt(\bx)  \pmod{\ZZ^d}$.  
The collection $\{\cF_1+\bt_1, \dots, \cF_h+\bt_h\}$ also forms a measurable natural partition of~$\cF$, hence the terminology exchange of domains; see Figure~\ref{fig:exchRauzy} below for an illustration. 
The \emph{language} associated with the partition $\{\cF_1, \dots,\cF_h\}$ is the set of words $i_0 \cdots i_n \in \{1, \dots,h\}^*$ such that $\bigcap _{k=0}^n  \tilde{R}_\bt^{-k} \mathring{\cF}_{i_k} \neq \emptyset$. 

\begin{definition}[Natural coding]\label{def:NC}
A~shift $(X,\Sigma)$ is a \emph{natural coding} of $(\TT^d,R_\bt)$ if  its language is the language of a natural partition $\{\cF_1,\dots,\cF_h\}$ and $\bigcap_{n\in\NN} \overline{\bigcap_{k=0}^n \tilde{R}_\bt^{-k} \mathring{\cF}_{i_k}}$ is reduced to one point for any $(i_n)_{n\in\NN} \in X$, where $\tilde{R}_\bt$ stands for the  associated  exchange of domains.\footnote{This intersection on $\cF$ is meaningful because $\{\cF_1,\dots,\cF_h\}$ is a natural partition of $\cF$; see the fourth bullet point of Definition~\ref{def:naturalpartition}.}

A~sequence $(i_n)_{n\in\NN} \in \{1,\dots,h\}^\NN$ is said to be a \emph{natural coding} of $(\TT^d,R_\bt)$ w.r.t.\ the natural partition $\{\cF_1,\dots,\cF_h\}$ if there exists $\bx \in \cF$ such that $(i_n)_{n\in\NN}$ codes the orbit of~$\bx$ under the action of~$\tilde{R}_\bt$, i.e., $\tilde{R}_\bt ^n(\bx) = \bx + \sum_{k=0}^{n-1} \bt_{i_k} \in \cF_{i_n}$ for all $n \in \NN$; note that $R_\bt ^n(\bx)  \equiv \tilde{R}_\bt ^n(\bx) \pmod{\ZZ^d}$. 
\end{definition}

If $(X,\Sigma)$ is a natural coding of $(\TT^d,R_\bt)$ w.r.t.\ a natural partition $\{\cF_1,\dots,\cF_h\}$, whose elements $\cF_1, \dots, \cF_h$ are bounded, we call $(X,\Sigma)$ a {\em bounded natural coding}. The shift $(X,\Sigma)$ is minimal, uniquely ergodic, and has purely discrete spectrum according to Lemma~\ref{lem:ncChev}.


We give an example for the concepts defined above. Consider the translation~$R_{\alpha}$ on~$\TT^1$ with $\alpha \in \RR \setminus \QQ$. The partition $\{\cF_1,\cF_2\}$ of $\cF=[0,1)$ given by $\cF_1=[0,1{-}\alpha)$ and $\cF_2=[1{-}\alpha,1)$ is a bounded natural partition (which corresponds to a Sturmian dynamical system \cite{MorseHedlund:40}) because $R_\alpha(x) = x+\alpha$ for $x \in  \cF_1$ and $R_\alpha(x) = x+\alpha-1$ for  $x \in \cF_2$. 
The bounded natural coding of a point $x \in \TT^1$ is the (Sturmian) sequence $(i_n)_{n\in\NN}$ given by $R_\alpha^n(x) \in \cF_{i_n}$, $n \in \NN$. 
On the contrary, the partition of $[0,1)$ by the intervals $[0,\frac{1}{2})$ and $[\frac{1}{2},1)$ is not a natural partition for~$R_\alpha$. Indeed, since we have no integers $k_1, k_2$ such that both $[\alpha + k_1, \alpha + k_1+\frac{1}{2}) \subset [0,1)$ and $[\alpha + k_2+\frac{1}{2}, \alpha + k_2+1) \subset [0,1)$, the fourth bullet point in Definition~\ref{def:naturalpartition} is not fulfilled.

\begin{definition}[Bounded remainder set]\label{def:BRS}
A~\emph{bounded remainder set} of a dynamical system $(X,T,\mu)$ with invariant probability measure~$\mu$ is a measurable set $Y \subseteq X$ such that there exists $C > 0$ with the property
\[
\big|\# \{0 \le n<N \,:\, T^n(x) \in Y\} - N \mu(Y)\big| \le C \quad \mbox{for all $N\in \NN$ and a.e. $x \in X$}.
\] 
\end{definition}

Bounded natural codings and bounded remainder sets are closely related; see for instance \cite{Rauzy:84,Ferenczi92} and Theorem~\ref{t:nc} below. 
We will define bounded natural partitions using \emph{Rauzy fractals}. 
To define Rauzy fractals, we denote by 
\begin{equation}\label{def:piu}
\pi_{\bu}:\, \RR^d \to \bone^\perp \quad \mbox{the projection along $\bu$ on $ \bone^\perp$},
\end{equation}
where $\bone^\perp$ is the hyperplane orthogonal to $\bone = (1,1,\dots,1)$.

\begin{figure}[ht]
\includegraphics[trim=0 60 0 60,width = 0.5\textwidth]{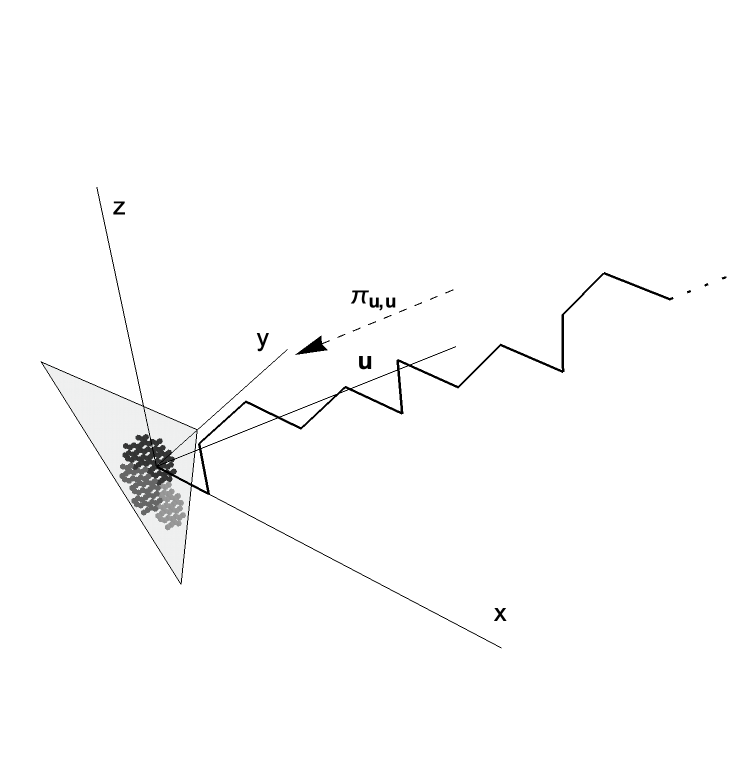} \qquad
\includegraphics[trim=0 0 0 0, width = 0.25\textwidth]{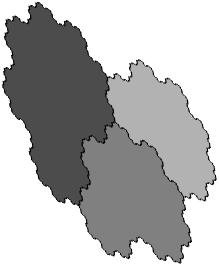}
\caption{Illustration of the definition of the Rauzy fractal $\cR_{\bsigma}$ corresponding to the periodic directive sequence $\bsigma = (\gamma_1,\gamma_2)^\infty$, where $\gamma_1,\gamma_2$ are the Cassaigne--Selmer substitutions defined in \eqref{eq:cassaigneSubs}. The abelianizations $\bl(p)$ of the prefixes of $(\gamma_1\circ \gamma_2)^n(1)$ define a broken line and are projected along~$\bu$ to~$\bone^\perp$ in order to define the Rauzy fractal~$\cR_{\bsigma}$, where $\bu$ is a generalized right eigenvector of~$\bsigma$. The subtiles $\cR_{\bsigma}(1)$, $\cR_{\bsigma}(2)$, and $\cR_{\bsigma}(3)$ are indicated by different shades of grey.} \label{fig:constrRauzy}
\end{figure}

\begin{definition}[Rauzy fractal and subtile] \label{def:RF}
Let $(X_{\bsigma},\Sigma)$ be an $S$-adic dynamical system with $\bsigma \in \cS_d^{\NN}$  having the generalized right eigenvector~$\bu$.
The \emph{Rauzy fractal} associated with $\bsigma =  (\sigma_n)_{n\in\NN}$ is defined as 
\[
\cR_{\bsigma} = \overline{\{\pi_{\bu}\, \bl(p) \,:\, \mbox{$p\preceq \sigma_{[0,n)}(j)$ for infinitely many  $n\in\NN$, $j \in \cA$}\}},
\]
and, for each word $w \in \cA^*$, a~\emph{subtile} of~$\cR_{\bsigma}$ is defined by 
\begin{equation}\label{eq:RFactor}
\cR_{\bsigma}(w) = \overline{\{\pi_{\bu}\, \bl(p) \,:\, \mbox{$p\,w \preceq \sigma_{[0,n)}(j)$ for infinitely many $n \in \NN$, $j \in \cA$}\}}.
\end{equation}
\end{definition} 

We clearly have 
\[
\cR_{\bsigma} = \bigcup_{w\in\cA^n} \cR_{\bsigma}(w) \qquad (n \in \NN),  
\]
and in particular $\cR_{\bsigma} = \bigcup_{i\in\cA} \cR_{\bsigma}(i)$. 
In Figure~\ref{fig:constrRauzy}, we illustrate the definition of the Rauzy fractal for the periodic directive sequence $\bsigma=(\gamma_1,\gamma_2)^\infty$, with $\gamma_1,\gamma_2$ being the Cassaigne--Selmer substitutions defined in \eqref{eq:cassaigneSubs} below. 
Rauzy fractals associated with periodic sequences~$\bsigma$ (and therefore related to substitutive dynamical systems) go back to \cite{Rauzy:82} and have been studied extensively; see for instance \cite{Arnoux-Ito:01,Berthe-Siegel:05,CANTBST,CS:01,Fog02,Ito-Rao:06,ST09,thuswaldner2019boldsymbolsadic}. 
Our definition of~$\cR_{\bsigma}$ is equivalent to the one in \cite[Section~2.9]{BST:19}, which uses limit sequences of~$\bsigma$, i.e., infinite sequences that are images of $\sigma_{[0,n)}$ for all $n \in \NN$.

\begin{figure}[ht]
\includegraphics[width=.8\textwidth]{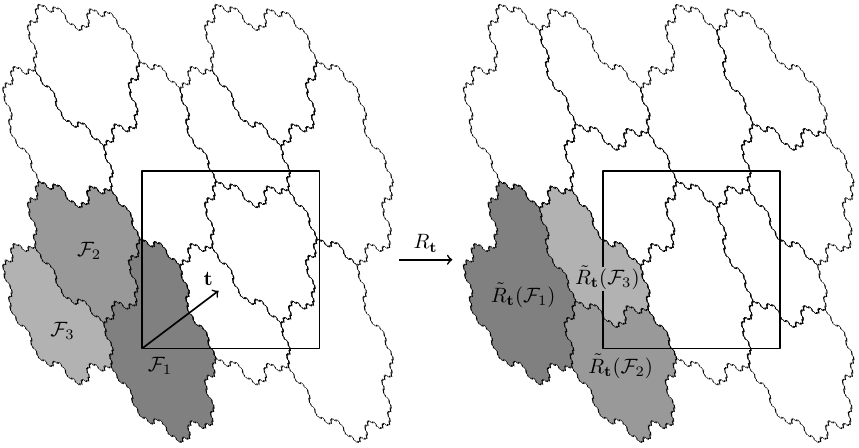}
\caption{Let $\cR_{\bsigma} = \bigcup_{i\in\cA} \cR_{\bsigma}(i)$ be the Rauzy fractal associated with the directive sequence $\bsigma = (\gamma_1,\gamma_2)^\infty$; see \eqref{eq:cassaigneSubs} for the definition of the Cassaigne--Selmer substitutions $\gamma_1$ and~$\gamma_2$. The negative projection $-\cR'_{\bsigma}$ of this Rauzy fractal is a measurable fundamental domain of~$\TT^2$ (i.e., its translates by vectors in~$\ZZ^2$ tile~$\RR^2$)  admitting the natural partition $\{\cF_1,\cF_2,\cF_3\}=\{-\cR'_{\bsigma}(1),-\cR'_{\bsigma}(2),-\cR'_{\bsigma}(3)\}$ w.r.t.\ $R_{\bt}$, where $\bt = (1/\beta^3,1/\beta^4)$ with $\beta^3=\beta+1$.
The exchange of domains~$\tilde{R}_{\bt}$ is defined by $\tilde{R}_{\bt}(\bx) = \bx + \bt_i$ on~$\cF_i$ with $\bt_1 = \bt - (1,0)$, $\bt_2= \bt - (0,1)$, $\bt_3 = \bt$.} \label{fig:exchRauzy}
\end{figure}

For convenience, we define a further ``projection''  that will provide translations on~$\TT^{d-1}$ in the main results given in Section~\ref{sec:main-results}. 
We set
\begin{equation} \label{e:defpi}
\pi':\, \RR^d \to \RR^{d-1}, \quad (x_1,\ldots,x_d) \mapsto  (x_1,\dots,x_{d-1}),
\end{equation}
i.e., we omit the last coordinate of a vector. 
(In doing so, we make an arbitrary choice; it would also be possible to omit any other coordinate.) 
Sometimes, we will just write $\bx'$ instead of~$\pi'(\bx)$.  
Similarly, for the subtiles embedded in $\RR^{d-1}$ via~$\pi'$, we will write
\begin{equation}\label{eq:Rprime}
\cR'_{\bsigma}(w) = \pi' (\cR_{\bsigma}(w)) \qquad (w\in \cA^*).
\end{equation}
Figure~\ref{fig:exchRauzy}  illustrates how subtiles of the projection of a Rauzy fractal $\cR_{\bsigma}$ give rise to a natural partition and visualizes the domain exchange~$\tilde{R}_{\bt}$. In this figure, we use again the Rauzy fractal for the periodic directive sequence $\bsigma=(\gamma_1,\gamma_2)^\infty$, with $\gamma_1,\gamma_2$ as in \eqref{eq:cassaigneSubs} below.

\subsection{Cylinders  and  positive range}\label{sec:further-definitions}
To state our theorems, we need a few more definitions on partitions associated with continued fraction algorithms.

\begin{definition}[Cylinder and follower set, positive range] \label{def:pr}
Let $(D,\Sigma,\nu)$ be a dynamical system with $D \subset \cS_d^{\NN}$ and a shift invariant Borel measure~$\nu$. 
The \emph{cylinder set} of $(\omega_0,\dots,\omega_{n-1}) \in \cS_d^n$ is defined as 
 \[
[\omega_0,\dots,\omega_{n-1}] = \big\{(\upsilon_k)_{k\in\NN} \in D\,:\, (\upsilon_0,\dots,\upsilon_{n-1}) = (\omega_0,\dots,\omega_{n-1})\big\},
\] 
and $\Sigma^n[\omega_0,\dots,\omega_{n-1}]$ is the \emph{follower set} of $(\omega_0,\dots,\omega_{n-1})$.
Moreover, we say that $(\omega_n)_{n\in\NN}$ has \emph{positive  range} in $(D,\Sigma,\nu)$ if
\[
\inf_{n\in\NN} \nu(\Sigma^n[\omega_0,\dots,\omega_{n-1}]) > 0.
\]
 
Similarly, the \emph{cylinder sets} of a multidimensional continued fraction algorithm $(\Delta,T,A,\nu)$ are given by
\begin{equation} \label{eq:Deltan}
\Delta^{(n)}(\bx) = \{\by \in \Delta \,:\, A(\by) = A(\bx), A(T\by) = A(T \bx), \dots, A(T^{n-1}\by) = A(T^{n-1} \bx)\},
\end{equation}
with $\Delta^{(0)}(\bx) = \Delta$; for convenience, we set $\Delta(\bx) = \Delta^{(1)}(\bx)$. 
In this context, the \emph{follower sets} are the sets of the form $T^n \Delta^{(n)}(\bx)$.
Then $\bx \in \Delta$ is said  to have \emph{positive range} in $(\Delta,T,A,\nu)$ if 
\[
\inf_{n\in\NN} \nu(T^n \Delta^{(n)}(\bx)) > 0.
\]
\end{definition}

Cylinder sets of $(D,\Sigma,\nu)$ are measurable because all cylinders are open sets in the subspace topology on~$D$. 
This is the reason why we assumed~$\nu$ to be a Borel measure. 
We also recall that $D$ is not necessarily closed. 
Note that measurability of the cylinder sets of $(\Delta,T,A,\nu)$ holds because $A$ is measurable by assumption.

In all the classical algorithms we are aware of, almost every $\bx \in \Delta$ has positive range, and we even have the (global) \emph{finite range property} (cf.~\cite{ItoYuri:87}) stating that the set of follower sets
\[
\cD = \{T^n \Delta^{(n)}(\bx) \,:\, \bx \in \Delta,\, n \in \NN\}
\]
is finite, where sets differing only on a set of $\nu$-measure zero are identified.
For instance, although the Jacobi--Perron algorithm is multiplicative, $\cD$ consists of only two elements; see also Section~\ref{subsec:JP}.
By the $T$-invariance of $\nu$, the finite range property obviously implies positive range for a.e.\ $\bx \in \Delta$ if we suppose that all cylinders satisfy $\nu(\Delta^{(n)}(\bx))>0$; this will be the case for the algorithms considered in Section~\ref{sec:ex}.

If $(\Delta,T,A,\nu)$ has the finite range property and $\bigcap_{n\in\NN}  \Delta^{(n)}(\bx) = \{\bx\}$ for almost all $\bx \in \Delta$, i.e., the set of cylinders $\{\Delta(\bx) : \bx \in \Delta\}$  is a generating partition, then $\{U \cap \Delta(\bx) : U \in \cD,\, \bx \in \Delta\}$ forms a (measurable countable)  {generating Markov partition} of $(\Delta,T)$; see e.g.\  \cite[Theorem~10.1]{Yuri:95}.  
Most of the classical continued fraction algorithms (like Brun, Selmer, and Jacobi--Perron) are designed in a way that this Markov partition property holds. 

We need that any set $B \subset \Delta$ with $\nu(B) > 0$ included in the follower set $T^n \Delta^{(n)}(\bx)$ leads to an intersection $T^{-n} B \cap \Delta^{(n)}(\bx)$ with positive measure. 
To this end, we always assume the stronger property that  
\begin{equation} \label{e:abscontinuity}
\nu(E) = 0 \quad \Longrightarrow \quad \nu \circ T(E)=0 \qquad \mbox{for all measurable sets $E$}.
\end{equation}
Although $\nu \circ T$ is usually  not additive and therefore not a measure, we use the notation $\nu\circ T \ll \nu$ because \eqref{e:abscontinuity} is reminiscent of absolute continuity. 

The notation $\nu\circ \Sigma \ll \nu$ has the analogous meaning in the context of a shift $(D,\Sigma,\nu)$.

\section{Main results}\label{sec:main-results}
We  present two types of results: the first type is stated in the framework 
of multidimensional continued fraction algorithms in Section~\ref{sec:mainMCF}, the second one is stated in terms of $S$-adic dynamical systems and directive sequences in Section~\ref{sec:mainSadic}. 
For both frameworks, two theorems are given.
The first one requires the existence of a single substitutive dynamical system with purely discrete spectrum which corresponds to a periodic sequence in the set of  $S$-adic sequences under consideration. 
The existence of this single system already implies purely discrete spectrum for a whole shift of $S$-adic dynamical systems. 
It is stated in Theorem~\ref{theo:MCF} for multidimensional continued fraction algorithms and in Theorem~\ref{theo:main} for shifts of directive sequences. 
The second one yields unconditional purely discrete spectrum results for accelerations and is contained in Theorem~\ref{theo:MCF2} for multidimensional continued fraction algorithms and in Theorem~\ref{theo:main2} for shifts of directive sequences. 
All these results are then made more explicit in terms of bounded remainder sets with Theorem~\ref{t:nc}. 

\subsection{Main results on multidimensional continued fraction algorithms} \label{sec:mainMCF}
In this section, we provide our main results for multidimensional continued fraction algorithms. 
We recall that we use the abbreviation $\bx' = \pi'(\bx)$ for the map~$\pi'$ defined in (\ref{e:defpi}).
In particular, following \eqref{eq:Rprime}, we wite $\cR'_{\bsigma}(i) = \pi'(\cR_{\bsigma}(i))$. 
The notation~$\ll$ is defined at the end of Section~\ref{sec:further-definitions}. 

\begin{theorem} \label{theo:MCF}
Let $(\Delta,T,A,\nu)$ be a positive $(d{-}1)$-dimensional continued fraction algorithm satisfying the Pisot condition and $\nu \circ T \ll \nu$. 
Let $\bphi$ be a faithful substitutive realization of $(\Delta,T,A,\nu)$. 
Assume that there is a periodic Pisot point $\bx_0 \in \Delta$ with positive range in  $(\Delta,T,A,\nu)$ such that $\bphi(\bx_0)$ has purely discrete spectrum.
Then, for $\nu$-almost all $\bx \in \Delta$, the $S$-adic dynamical system $(X_{\bphi(\bx)},\Sigma)$ is a bounded natural coding of the minimal translation by $\pi'(\bx)$ on~$\TT^{d-1}$ w.r.t.\ the partition $\{-\cR'_{\bphi(\bx)}(i) \,:\, i\in \cA\}$; in particular, its measure-theoretic spectrum is purely discrete. 
\end{theorem}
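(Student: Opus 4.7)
The plan is to translate the statement into properties of the Rauzy fractal $\cR_{\bphi(\bx)}$ and then propagate those properties from the distinguished periodic Pisot point to $\nu$-almost every $\bx \in \Delta$ using the Pisot cocycle and ergodicity. The expected logical structure runs through the natural coding criterion of Theorem~\ref{t:nc}: to obtain a natural coding of the toral translation by $\pi(\bx)$ on $\TT^{d-1}$, it suffices to show that, for $\nu$-almost every $\bx \in \Delta$, the Rauzy fractal $\cR_{\bphi(\bx)}$ tiles $\bone^{\perp}$ by $\pi'_{\bx}(\ZZ^d)$ translates (equivalently, the subtiles $\cR_{\bphi(\bx)}(i)$, $i \in \cA$, form a natural measurable partition of a bounded fundamental domain). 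The dictionary \eqref{eq:phi} identifying $(\Delta,T,\nu)$ with $(\bphi(\Delta),\Sigma,\nu\circ \bphi^{-1})$ reduces everything to a statement about the shift $(\bphi(\Delta),\Sigma)$ equipped with the cocycle $Z$, whose Lyapunov spectrum is Pisot by hypothesis.

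Step one is the seed. At the periodic Pisot point $\bx_0$, the sequence $\bphi(\bx_0)$ is purely periodic and the associated substitutive system has purely discrete spectrum by hypothesis. Standard Pisot substitution theory (as recalled in the excerpt) then gives that $\cR_{\bphi(\bx_0)}$ is a bounded compact set tiling $\bone^{\perp}$ periodically with respect to $\pi'_{\bx_0}(\ZZ^d)$, and $\{\cR_{\bphi(\bx_0)}(i)\}_{i\in\cA}$ is a natural measurable partition. Step two is propagation of tiling. By positive range and $\nu\circ T\ll\nu$, the cylinder $\Delta^{(n)}(\bx_0)$ together with its followers has uniformly positive measure at every level; hence, by ergodicity of $(\Delta,T,\nu)$, $\nu$-almost every $\bx$ visits the tower based on $\bx_0$ infinitely often with positive frequency. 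Using the substitution action of $M_{\sigma_{[0,n)}}$ on Rauzy fractals (which maps $\cR_{\Sigma^n\bphi(\bx)}(w)$ to a finite union of subtiles of $\cR_{\bphi(\bx)}$), one pulls back the good tiling at $\bx_0$ through the prefixes along a generic orbit.

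Step three uses the Pisot condition. The second Lyapunov exponent $\theta_2(Z) < 0$ and Oseledets' theorem provide, for $\nu$-almost every $\bx$, a contracting hyperplane $V$ such that the action of $(M_{\sigma_{[0,n)}})^{-1}$ on the transverse direction decays exponentially. Combined with \eqref{eq:u} for the expanding direction, this is exactly the mechanism developed in \cite{BST:19}: measure-preservation of the Rauzy fractal under the renormalization, together with exponential transverse contraction, forces that if $\cR_{\Sigma^n\bphi(\bx)}$ tiles (for infinitely many $n$ on a positive-measure set of $\bx$), then so does $\cR_{\bphi(\bx)}$. One also has to verify that the generalized right eigenvector of $\bphi(\bx)$ is $\bx$ itself (so that the translation vectors $\pi'_{\bx}\be_i$ reduce modulo the lattice to the translation by $\pi(\bx)$ on $\TT^{d-1}$); this is a direct consequence of weak convergence, which follows from the Pisot condition and faithfulness of $\bphi$.

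Having established that $\{\cR_{\bphi(\bx)}(i)\}_{i\in\cA}$ is a natural measurable partition of a bounded fundamental domain for $\pi'_{\bx}(\ZZ^d)$, Theorem~\ref{t:nc} immediately delivers the conclusion: $(X_{\bphi(\bx)},\Sigma)$ is a natural coding of $(\TT^{d-1},R_{\pi(\bx)})$, and its measure-theoretic spectrum is purely discrete. The principal obstacle will be step three, namely transporting the tiling property from the periodic Pisot point to $\nu$-almost every point: this requires a careful interplay between (i) the Markov/positive-range structure giving recurrence to a fixed cylinder, (ii) the Pisot contraction on transverse directions to prevent subtiles from overlapping in the limit, and (iii) showing that the topological and measure-theoretic assumptions on the partition at $\bx_0$ (boundary of measure zero, natural partition structure) survive the renormalization. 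The faithfulness hypothesis on $\bphi$ is what guarantees that $Z$ is a well-defined measurable cocycle over $(\Delta,T,\nu)$, so the Lyapunov and ergodic arguments apply coherently.
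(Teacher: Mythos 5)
Your overall skeleton (seed at the periodic Pisot point, Poincar\'e recurrence/ergodicity to propagate, Pisot condition for balancedness and contraction, conclusion via tiling and Theorem~\ref{t:nc}) matches the paper's strategy of reducing Theorem~\ref{theo:MCF} to Theorem~\ref{theo:main}. But the central propagation step is only asserted, and as you state it it does not work. When a generic orbit visits the cylinder of the periodic Pisot point, the shifted sequence $\Sigma^n\bphi(\bx)$ merely \emph{begins} with the block $(\tau_0,\dots,\tau_{h-1})$; its Rauzy fractal $\cR_{\Sigma^n\bphi(\bx)}$ depends on the entire infinite tail and is in general a completely different set from $\cR_{\bphi(\bx_0)}$. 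So you cannot ``pull back the good tiling at $\bx_0$ through the prefixes'': sharing a finite prefix of substitutions transfers neither the tiling property nor the coincidence structure of the periodic system to the generic tail. Likewise, your appeal to measure preservation plus exponential transverse contraction only yields that $\cC_{\bphi(\bx)}$ is a \emph{multiple} tiling (constant covering degree, cf.\ \cite[Proposition~7.5]{BST:19}); nothing in your argument pins the degree to one.

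The missing idea is the finitization of coincidence that the paper supplies in Lemma~\ref{lem:tau} and Lemma~\ref{lem:sadic7.9crit}: from pure discrete spectrum of the seed one extracts (via Lemma~\ref{rem:spectrumEquiv}) the geometric coincidence condition for $\tau$, and then shows that for every balance constant $C$ there is a power $m=m_\tau(C)$ such that the \emph{effective} coincidence inclusion \eqref{e:gcct} holds \emph{uniformly in the projection direction} $\bt\in\RR^d_{\ge0}$ --- this uniformity is exactly what lets the finite window $\tau^m$ inside a generic sequence (whose generalized right eigenvector differs from that of $\tau$) certify an exclusive point of $\cC_{\Sigma^n\bsigma}$, provided the further tail lies in $B_C$. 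Accordingly, the recurrence must be to the longer set $[\tau_0,\dots,\tau_{km-1}]\cap\Sigma^{-km}B_C$ (length depending on $C$, with balanced tail), not just to the cylinder of $\bx_0$; combined with the fact that a.e.\ sequence satisfies PRICE (Lemma~\ref{lem:combcond}, itself needing Lemma~\ref{l:nuBC}), Proposition~\ref{p:gccBST} upgrades the exclusive point to a genuine tiling, and only then does \cite[Proposition~8.5]{BST:19} give the natural coding. Without this effective, direction-uniform coincidence step your plan has no mechanism to rule out covering degree $\ge 2$ for generic $\bx$, so the proof as proposed is incomplete.
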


It will follow from Theorem~\ref{t:nc} that the sets $-\cR'_{\bphi(\bx)}(i)$, $i\in \cA$, are bounded remainder sets.
If the directive sequence  $\bphi(\bx)$ is assumed to be (left) proper (as defined in Section \ref{sec:s-adic-shifts}), Theorem~\ref{t:nc} shows that we can even refine these bounded remainder sets from letters to words. 
In particular, in this case the Rauzy fractals $-\cR'_{\bphi(\bx)}(w)$, $w\in \cA^n$, associated with words of length~$n$ are bounded remainder sets for each $n \in \NN$.

\begin{remark}\mbox{}  \label{r:faithful}
\begin{itemize}
\item[(i)]
We note that $(X_{\bphi(\bx_0)},\Sigma)$ is a substitutive dynamical system since $\bphi(\bx_0)$ is a periodic sequence of substitutions. 
For such systems, some combinatorial \emph{coincidence conditions} (as for instance the ones used in \cite{AkiBBLS,Barge-Kwapisz:06,CANTBST,Ito-Rao:06}) can be used to establish purely discrete measure-theoretic spectrum; see Section~\ref{sec:Rauzy} for precise statements. 
We could therefore replace the purely discrete spectrum condition in Theorem~\ref{theo:MCF} by ``$\varphi(\bx_0) \circ \varphi(T\bx_0) \circ \dots \circ \varphi(T^{n-1}\bx_0)$ satisfies the \emph{super coincidence condition} from \cite[Definition~4.2]{Ito-Rao:06}''. 
However, since coincidence conditions require quite some notation, we decided to formulate them later in this paper in order to make our main results easier to read.
The Pisot substitution conjecture implies that all Pisot substitutions satisfy the super coincidence condition. 
To get an impression of the techniques used in the substitutive case for proving purely discrete spectrum, see also Section~\ref{sec:ex}, where we use the balanced pair algorithm to prove purely discrete spectrum of a substitutive dynamical system.
\item[(ii)]
In Theorem~\ref{theo:MCF}, we can omit the requirement that $\bphi$ is faithful if we replace $A$ by $\varphi$ in the definition of the cylinder sets $\Delta^{(n)}(\bx)$ in \eqref{eq:Deltan}, if we assume that $\varphi$ is measurable,  and if we assume positive range with respect to  this new definition of cylinder.
\end{itemize}
\end{remark}

Since the Pisot substitution conjecture is not proved, we cannot omit the requirement of a periodic Pisot point with purely discrete spectrum in Theorem~\ref{theo:MCF}, and we do not even know whether there always exists a substitutive realization~$\bphi$ that admits such a point.
However, we are able to establish the following unconditional theorem that guarantees the existence of accelerations $(\Delta,T^k)$ for which there exists a faithful substitutive realization~$\bphi$ with a periodic Pisot point $\bx_0$ such that $\bphi(\bx_0)$ has purely discrete spectrum. 

\begin{theorem} \label{theo:MCF2}
Let $(\Delta,T,A,\nu)$ be a positive $(d{-}1)$-dimensional continued fraction algorithm satisfying the Pisot condition and $\nu \circ T \ll \nu$, and assume that there exists a periodic Pisot point with positive range. 
Then there exist a positive integer~$k$ and a (faithful) substitutive realization~$\bphi$ of $(\Delta,T^k,A,\nu)$ such that for $\nu$-almost all $\bx \in \Delta$ the $S$-adic dynamical system $(X_{\bphi(\bx)},\Sigma)$ is a bounded natural coding of the minimal translation by $\pi'(\bx)$ on~$\TT^{d-1}$ w.r.t.\ the partition $\{-\cR'_{\bphi(\bx)}(i) \,:\, i \in \cA\}$; in particular, its measure-theoretic spectrum is purely discrete.
Moreover, we have $(\Delta,T^k,\nu) \overset{\bphi}{\cong} (\bphi(\Delta),\Sigma,\nu\circ\bphi^{-1})$.
\end{theorem}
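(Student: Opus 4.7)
The plan is to reduce Theorem \ref{theo:MCF2} to Theorem \ref{theo:MCF} by replacing $T$ with a suitable power $T^k$ and designing a faithful substitutive realization $\bphi$ of $(\Delta, T^k, A^{(k)}, \nu)$ for which the periodic Pisot point supplied by hypothesis satisfies the missing spectral assumption of Theorem \ref{theo:MCF}. Once this is achieved, all other hypotheses of Theorem \ref{theo:MCF} automatically hold for $(\Delta, T^k, A^{(k)}, \nu)$: the Pisot condition on the Lyapunov exponents of the $Z$-cocycle, positive range, and absolute continuity $\nu \circ T^k \ll \nu$ are all inherited by any positive-integer power of $T$.

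To set up the construction, let $\bx_0$ be the periodic Pisot point of period $n$ given by hypothesis, and put $M = A^{(n)}(\bx_0)$, a Pisot matrix. For any $\ell \ge 1$, the point $\bx_0$ is fixed by $T^{n\ell}$, with cocycle value $A^{(n\ell)}(\bx_0) = M^\ell$ again Pisot. A faithful substitution selection $\varphi$ for $(\Delta, T^{n\ell}, A^{(n\ell)})$ is determined by assigning, to each matrix $N$ in the range of $A^{(n\ell)}$, a single substitution with incidence matrix $\tr{N}$; this assignment is entirely at our disposal. In particular, at $N = M^\ell$ we may take \emph{any} substitution $\tau$ with incidence matrix $\tr{M}^\ell$ and set $\varphi|_{\{A^{(n\ell)} = M^\ell\}} = \tau$. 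Then $\bphi(\bx_0) = (\tau, \tau, \ldots)$ is constant, so the $S$-adic system $(X_{\bphi(\bx_0)}, \Sigma)$ coincides with the substitutive system $(X_\tau, \Sigma)$.

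The crux of the argument, and what I expect to be the main obstacle, is to exhibit $\ell$ and a substitution $\tau$ with incidence matrix $\tr{M}^\ell$ such that $(X_\tau, \Sigma)$ has purely discrete spectrum; one cannot invoke the Pisot substitution conjecture directly. The key is that passing to a large power $M^\ell$ provides the necessary combinatorial flexibility: for $\ell$ sufficiently large, one constructs $\tau$ as a geometric (dual) substitution associated with a sufficiently refined Markov partition of the hyperbolic toral automorphism induced by $\tr{M}$, arranged so as to satisfy the super-coincidence condition of \cite[Definition 4.2]{Ito-Rao:06}, or so that one of the algorithmic criteria for purely discrete spectrum cited in \cite{CANTBST, mercat2018characterization, Siso:02} applies. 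While for $\ell = 1$ the set of substitutions with incidence matrix $\tr{M}$ may be too rigid to admit the required coincidence, this set grows combinatorially with $\ell$, and for large $\ell$ the desired $\tau$ can be produced explicitly from the Rauzy fractal geometry of the Pisot matrix $M$.

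Having fixed such $\tau$, set $k = n\ell$ and extend $\varphi$ to all of $\Delta$ by assigning, on each cylinder of $A^{(k)}$ other than the one containing $\bx_0$, an arbitrary substitution with the appropriate incidence matrix. The resulting $\bphi$ is a faithful substitutive realization of $(\Delta, T^k, A^{(k)}, \nu)$ under which $\bx_0$ is a periodic Pisot point with positive range, since every $T^k$-cylinder of level $n$ contains the corresponding $T$-cylinder of level $nk$, whose $\nu$-measure is bounded away from zero by hypothesis. Its $S$-adic system is $(X_\tau, \Sigma)$, which has purely discrete spectrum by the choice of $\tau$. Weak convergence of $T^k$ at $\nu$-almost every $\bx$, a consequence of the Pisot condition via Oseledets' theorem, delivers the conjugacy $(\Delta, T^k, \nu) \overset{\bphi}{\cong} (\bphi(\Delta), \Sigma, \nu \circ \bphi^{-1})$ in the sense of \eqref{eq:phi}. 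Theorem \ref{theo:MCF} applied to $(\Delta, T^k, A^{(k)}, \nu)$ with periodic Pisot point $\bx_0$ and realization $\bphi$ now yields the natural coding of the translation by $\pi(\bx)$ on $\TT^{d-1}$ for $\nu$-almost all $\bx \in \Delta$, completing the argument.
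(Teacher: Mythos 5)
There is a genuine gap at what you yourself identify as the crux. Your reduction hinges on the existence, for the Pisot matrix $M=A^{(n)}(\bx_0)$, of some power $\ell$ and a substitution $\tau$ with incidence matrix $\tr{\!M}^\ell$ such that $(X_\tau,\Sigma)$ has purely discrete spectrum; but you only assert this, appealing to ``a sufficiently refined Markov partition'' and to the algorithmic criteria of \cite{CANTBST,mercat2018characterization,Siso:02}. Those criteria are verification procedures for a \emph{given} substitution, not existence results, and the statement that super-coincidence can be arranged for large $\ell$ is exactly the nontrivial content that the paper supplies: Proposition~\ref{p:sigmatilde} constructs such a substitution explicitly, ordering the lattice points of a bounded corridor around $\RR\bu$ by their $\bu$-height and realizing the successive differences by words produced by the Meijer--Tijdeman lemma (Lemma~\ref{lem:word}), with Lemma~\ref{lem:balanced} guaranteeing the balance needed to verify the effective geometric coincidence condition~\eqref{e:gcc4}. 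Without an argument of this kind (or an adaptation of the criterion of \cite{Barge:16}, which the paper notes would itself require extra work to transfer from the tiling flow to the shift), your proof does not get off the ground, since ``purely discrete spectrum for some substitution with prescribed Pisot incidence matrix'' is not a consequence of anything cited and is in the same spirit as the open Pisot conjecture for a single arbitrary substitution.

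A secondary but real issue: you claim all hypotheses of Theorem~\ref{theo:MCF} are automatically inherited by $(\Delta,T^k,A^{(k)},\nu)$, but ergodicity is not. The measure $\nu$ is ergodic for $T$, yet it may fail to be ergodic for $T^k$, and Theorem~\ref{theo:MCF} (through Theorem~\ref{theo:main}) is stated for an \emph{ergodic} invariant measure. The paper's proof of Theorem~\ref{theo:main2} handles precisely this point by showing that any $\Sigma$-invariant subset of the image shift has measure at least $1/k$, decomposing into at most $k$ ergodic components, checking positive range of a shifted copy of the periodic Pisot sequence inside each component, and applying Theorem~\ref{theo:main} componentwise. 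Your argument needs an analogous step (ergodic decomposition of $\nu$ under $T^k$, together with locating a periodic Pisot point with positive range in each component) before Theorem~\ref{theo:MCF} can be invoked. Apart from these two points your overall strategy --- pass to a power, choose the substitution freely on the cylinder of the periodic point, and feed the result back into the conditional theorem --- is the same reduction the paper performs via $\bseta$, $\bpsi$ and Theorem~\ref{theo:main2}.
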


\begin{remark}\label{rem:allRotations}
The set of translations in Theorems~\ref{theo:MCF} and~\ref{theo:MCF2} does not cover~$\TT^{d-1}$ since the translations are of the form $R_{\bt}$ with $\bt \in [0,1]^{d-1}$ and $\|\bt\|_1 \le 1$. 
However, $R_{\bt}$ is conjugate to all translations $R_{\bs}$ with $\bs \in \mathrm{GL}(d{-}1,\ZZ)\, \bt$, and $\{\bt \in [0,1]^{d-1} \,:\, \|\bt\|_1 \le 1\}$ is mapped by
\[
(t_1,\ldots,t_{d-1}) \mapsto (t_1,t_1+t_2,\dots,t_1+t_2+\cdots+t_{d-1})
\]
to $\{\bt \in [0,1]^{d-1} \,:\, 0 \le t_1 \le t_2 \le \cdots \le t_{d-1} \le 1\}$.
Then, taking permutations of the coordinates of the latter set gives the whole torus~$\TT^{d-1}$.
\end{remark}

Verifying purely discrete spectrum for some concrete substitutive dynamical systems will allow us to use Theorem~\ref{theo:MCF}  in Section~\ref{sec:ex} in order to prove a.e.\ purely discrete spectrum for many continued fraction algorithms like for instance the Jacobi--Perron, Brun, Cassaigne--Selmer and Arnoux--Rauzy--Poincar\'e algorithms. 
Indeed, it is well known that these algorithms have the finite range property, and the Pisot condition holds for all these algorithms when $d=3$. 
In the case of Brun, the Pisot condition also holds for $d=4$.
Applying Theorem~\ref{theo:MCF} to these algorithms, according to Remark~\ref{rem:allRotations} we are able to realize almost all translations in~$\TT^2$ and~$\TT^3$ via systems of the form $(X_{\bphi(\bx)},\Sigma)$, $\bx \in \Delta$.
Since  the Cassaigne--Selmer algorithm (for $d=3$) gives rise to languages~$L_{\bphi(\bx)}$ of factor complexity $2n+1$, we also show that there exist natural codings for almost all translations of~$\TT^2$ with factor complexity $2n+1$, see Corollary~\ref{cor:CS}. 
Looking at \cite{BCDLPP:19,BST:19}, we also see other consequences for these algorithms and their associated shifts of directive sequences like bounded remainder sets for letters and words, tiling properties of Rauzy fractals, and a description of their dimension group. We will come back to these consequences in Theorem~\ref{t:nc}, in Section~\ref{sec:Rauzy}, and in Section~\ref{sec:ex}.

\subsection{Main results on shifts  of directive sequences} \label{sec:mainSadic}
We now give variants of the results of the previous section in terms of directive sequences. 

\begin{theorem} \label{theo:main}
Let $D \subset \cS_d^{\NN}$ be a shift-invariant set of directive sequences equipped with an ergodic $\Sigma$-invariant Borel probability measure~$\nu$ satisfying $\nu \circ \Sigma \ll \nu$. 
Assume that the linear cocycle $(D,\Sigma,Z,\nu)$ defined by $Z((\sigma_n)_{n\in\NN}) = \tr{\!}M_{\sigma_0}$ satisfies the Pisot condition, and that there is a periodic Pisot sequence in $D$ having positive range in $(D,\Sigma,\nu)$ and purely discrete spectrum. 
Then for $\nu$-almost all $\bsigma\in D$ the $S$-adic dynamical system $(X_{\bsigma},\Sigma)$ is a bounded natural coding of the minimal translation by $\pi'(\bu)$ on~$\TT^{d-1}$  w.r.t.\ the partition $\{-\cR'_{\bsigma}(i) \,:\, i \in \cA\}$. Here, $\bu$ is the   generalized right eigenvector of~$\bsigma$ normalized by $\|\bu\|_1=1$. 
In particular, the measure-theoretic spectrum of $(X_{\bsigma},\Sigma)$ is purely discrete.  
\end{theorem}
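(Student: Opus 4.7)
The plan is to combine the Rauzy-fractal geometry developed in \cite{BST:19} with an ergodic propagation argument that uses the periodic Pisot sequence as a seed. First, for $\nu$-almost every $\bsigma\in D$, I would establish that the Rauzy fractal $\cR_{\bsigma}\subset\bone^\perp$ is a compact set of positive Lebesgue measure, that each subtile $\cR_{\bsigma}(i)$ ($i\in\cA$) has boundary of measure zero, and that the subtile family $\{\cR_{\bsigma}(w):w\in\cA^*\}$ obeys the usual affine set-equation relating $\cR_{\bsigma}$ to the substitution-transformed $\cR_{\Sigma\bsigma}$ with disjoint interiors. All of this rests on the Pisot condition: the Oseledets theorem supplies the uniform contraction on $\bone^\perp$ for $\nu$-a.e.\ $\bsigma$, and log-integrability of the cocycle $Z$ is automatic since its values are integer substitution matrices whose ergodic norm is controlled by the first Lyapunov exponent.

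Second, let $\btau=(\tau_0,\ldots,\tau_{n-1})^\infty$ denote the hypothesized periodic Pisot sequence of positive range and purely discrete spectrum. Because $\btau$ is periodic, $(X_{\btau},\Sigma)$ is substitutive, and purely discrete spectrum translates, via Theorem~\ref{t:nc}, into the statement that $\{\cR_{\btau}(i):i\in\cA\}$ is a natural measurable partition of a bounded fundamental domain of $\TT^{d-1}$, yielding a natural coding of the translation by $\pi(\bu_{\btau})$, where $\bu_{\btau}$ is the generalized right eigenvector of $\btau$.

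The heart of the proof is propagation. Using positive range of $\btau$ together with $\nu\circ\Sigma\ll\nu$, a Borel--Cantelli argument combined with ergodicity of $\Sigma$ ensures that, for $\nu$-almost every $\bsigma\in D$, there are infinitely many return times $\ell_k$ such that the block $(\sigma_{\ell_k},\ldots,\sigma_{\ell_k+kn-1})$ equals $(\tau_0,\ldots,\tau_{n-1})$ repeated $k$ times. At such a return, iterating the set-equation exhibits $\cR_{\bsigma}$ as an $M_{\sigma_{[0,\ell_k)}}$-image of a union of translates of subtiles of $\cR_{\Sigma^{\ell_k}\bsigma}$, whose first $k$ periods coincide with those of $\btau$. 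The rigid tiling behaviour of $\cR_{\btau}$ therefore dictates, at depth $k$ of the hierarchy, the subdivision of $\cR_{\bsigma}$. Letting $k\to\infty$ and invoking the uniform exponential contraction from the Pisot condition, the tiling property transfers from $\btau$ to $\cR_{\bsigma}$, giving the natural coding of the translation by $\pi(\bu)$ on $\TT^{d-1}$ together with the bounded-fundamental-domain property (immediate from compactness of the Rauzy fractal).

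The main obstacle I anticipate is exactly this propagation step. Transferring a single periodic seed's tiling property to $\nu$-almost every $\bsigma\in D$, without any coincidence-type hypothesis along generic orbits, requires reconciling two ingredients of rather different nature: the ergodic mass estimate provided by positive range and $\nu\circ\Sigma\ll\nu$, and the uniform geometric control supplied by the negative second Lyapunov exponent. The delicate point is showing that the tiling, and not merely purely discrete spectrum, survives in the inverse limit; equivalently, that overlap errors in the set-equation iteration do not accumulate at the returns. Once this is secured, minimality of the target translation by $\pi(\bu)$ follows from the Pisot condition ensuring total irrationality of $\pi(\bu)$ for $\nu$-a.e.\ $\bsigma$, and the proof is complete.
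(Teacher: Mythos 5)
There is a genuine gap, and you have located it yourself: the ``propagation step'' that you describe as the main obstacle is precisely the content of Theorem~\ref{theo:main}, and your proposed route for it (infinitely many returns to ever longer blocks $(\tau_0,\dots,\tau_{n-1})^k$, then a limit $k\to\infty$ in the iterated set equation while controlling accumulating overlap errors) is left entirely open; controlling those overlaps in the inverse limit is as hard as the theorem itself. The paper circumvents any such limit by making the coincidence criterion \emph{finitary}: by the effective version of geometric coincidence in Proposition~\ref{p:gccBST}, the tiling property of $\cC_{\bsigma}$ follows from a single inclusion involving lattice points in a ball of radius $C$, where $C$ is a balance constant for the tail of the directive sequence. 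Lemma~\ref{l:nuBC} (a consequence of the Pisot condition) yields such a uniform $C$ for a set of tails of measure arbitrarily close to $1$; Lemma~\ref{lem:tau} shows that the seed substitution $\tau$ satisfies this radius-$C$ inclusion for a fixed power $\tau^{m_\tau(C)}$ \emph{uniformly in the projection direction} $\bt \in \RR^d_{\ge 0}\setminus\{\mathbf{0}\}$ --- this uniformity is indispensable, because inside a generic $\bsigma$ the relevant projection is along $M_{\sigma_{[n,n+\ell)}}^{-1}M_{\sigma_{[0,n)}}^{-1}\bu$, which has nothing to do with the eigendirection of $\tau$, an issue your sketch never addresses; and Lemma~\ref{lem:sadic7.9crit} then shows that one \emph{single} occurrence of the block $\tau^{m_\tau(C)}$ followed by a $C$-balanced tail already forces $\cC_{\bsigma}$ to tile. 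One application of Poincar\'e recurrence (using positive range and $\nu\circ\Sigma\ll\nu$ to see that $[\tau_0,\dots,\tau_{km-1}]\cap\Sigma^{-km}B_C$ has positive measure) then covers $\nu$-almost every $\bsigma$; no Borel--Cantelli argument and no $k\to\infty$ is needed.

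A second, smaller but real, problem is your use of Theorem~\ref{t:nc}: that theorem goes in the opposite direction (it \emph{assumes} that $(X,\Sigma)$ is a natural coding and then identifies the atoms with Rauzy fractals), so purely discrete spectrum of the periodic seed does not, via that theorem, give you the tiling or natural-partition statement you need as geometric input. The passage from pure discreteness of $(X_\tau,\Sigma)$ to the geometric coincidence condition (equivalently, the tiling property of $\cC_\tau$) is a nontrivial input supplied in the paper by Lemma~\ref{rem:spectrumEquiv}, which rests on the results of Clark--Sadun and Barge--Kwapisz; without it your propagation has nothing geometric to propagate. Finally, the a.e.\ validity of the set equation and the measure-theoretic properties of $\cR_{\bsigma}$ that you take for granted in your first step require the PRICE property (recurrence of prefixes, a positive block, algebraic irreducibility, balancedness along a subsequence), which must be and is established for $\nu$-a.e.\ $\bsigma$ in Lemma~\ref{lem:combcond}; Oseledets alone does not suffice.
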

  
To get an analogue of Theorem~\ref{theo:MCF2} for directive sequences, we do not start with a shift of directive sequences but rather with its abelianization, i.e., a shift of sequences of matrices $(\fD,\Sigma)$, for which we would like to find a map $s: \cM_d \to \cS_d$ such that almost all $\bsigma \in s(\fD)$ have purely discrete spectrum, where $s((M_n)_{n\in\NN}) = (s(M_n))_{n\in\NN}$.
Again, we have to consider the accelerated shift $(\fD,\Sigma^k)$ for a suitable power $\Sigma^k$ to gain such a result. 
The main issue is the construction of a substitution with purely discrete spectrum associated with a given unimodular Pisot matrix, which is done in Proposition~\ref{p:sigmatilde}. 

\begin{theorem} \label{theo:main2}
Let $\fD \subset \cM_d^{\NN}$ be a shift-invariant set of sequences of unimodular matrices equipped with an ergodic $\Sigma$-invariant Borel probability measure~$\nu$ satisfying $\nu \circ \Sigma \ll \nu$.
Assume that the linear cocycle $(\fD,\Sigma,Z,\nu)$ defined by $Z((M_n)_{n\in\NN}) = \tr{\!}M_0$ satisfies the Pisot condition, and that there is a periodic Pisot sequence in $\fD$ having positive range in $(\fD,\Sigma,\nu)$. 
Then there exists a positive integer~$k$ and a map $\bpsi: \fD \to \cS_d^{\NN}$ satisfying $\bpsi \circ \Sigma^k = \Sigma \circ \bpsi$ such that for $\nu$-almost all $\bM \in \fD$ the $S$-adic dynamical system $(X_{\bpsi(\bM)},\Sigma)$ is a bounded natural coding of the minimal translation by $\pi'(\bu)$ on~$\TT^{d-1}$  w.r.t.\ the partition $\{-\cR'_{\bpsi(\bM)}(i) \,:\, i \in \cA\}$. 
Here, $\bu$ is the generalized right eigenvector of~$\bM$ normalized by $\|\bu\|_1=1$. 
In particular, the measure-theoretic spectrum of $(X_{\bpsi(\bM)},\Sigma)$ is purely discrete.  
\end{theorem}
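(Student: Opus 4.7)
The plan is to reduce to Theorem~\ref{theo:main} by accelerating $\Sigma$ and lifting each block of $k$ consecutive matrices to a substitution. Let $\bM^\star = (N_0, \ldots, N_{m-1})^\infty \in \fD$ be the periodic Pisot sequence of positive range granted by hypothesis, and set $N = N_0 N_1 \cdots N_{m-1}$, a Pisot matrix. The crux of the construction is to select an integer $j \geq 1$ and a substitution $\sigma^\star \in \cS_d$ with incidence matrix $N^j$ such that the substitutive dynamical system $(X_{\sigma^\star}, \Sigma)$ has pure discrete spectrum; the liberty to take $j$ large and to choose from among the many substitutions sharing the prescribed incidence matrix makes this feasible via verifiable coincidence criteria (for instance the super-coincidence condition of Ito--Rao invoked in an earlier remark), even though the general Pisot substitution conjecture is open.

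With $\sigma^\star$ in hand, set $k = jm$ and define $\bpsi:\, \fD \to \cS_d^{\NN}$ by a measurable assignment sending $\bM = (M_n)_{n\in\NN}$ to a sequence of substitutions whose $n$-th term has incidence matrix $M_{nk} M_{nk+1} \cdots M_{nk+k-1}$, normalized so that $\bpsi(\bM^\star) = (\sigma^\star)^\infty$. By construction $\bpsi \circ \Sigma^k = \Sigma \circ \bpsi$, and the cocycle identity $Z^{(n)}(\bpsi(\bM)) = Z^{(nk)}(\bM)$ shows that the Lyapunov exponents of the cocycle on $(\bpsi(\fD), \Sigma, \nu \circ \bpsi^{-1})$ are $k$ times those of the original, so the Pisot condition transfers. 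The absolute continuity $\nu \circ \bpsi^{-1} \circ \Sigma \ll \nu \circ \bpsi^{-1}$ follows from $\nu \circ \Sigma^k \ll \nu$, and the positive range of $\bM^\star$ in $(\fD, \Sigma, \nu)$ passes to positive range of $(\sigma^\star)^\infty$ in $(\bpsi(\fD), \Sigma, \nu \circ \bpsi^{-1})$, since the pullback under $\bpsi$ of the length-$n$ cylinder $[\sigma^\star, \ldots, \sigma^\star]$ contains the length-$nk$ cylinder $[N_0, \ldots, N_{m-1}, \ldots, N_0, \ldots, N_{m-1}]$ in $\fD$.

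All hypotheses of Theorem~\ref{theo:main} then hold on $(\bpsi(\fD), \Sigma, \nu \circ \bpsi^{-1})$, passing if necessary to an ergodic component of $\Sigma^k$ (the components are cyclically permuted by $\Sigma$, so conclusions on one yield analogous statements on the others). An application of Theorem~\ref{theo:main} furnishes, for $\nu \circ \bpsi^{-1}$-almost every $\bsigma \in \bpsi(\fD)$, that $(X_{\bsigma}, \Sigma)$ is a natural coding of the translation by $\pi(\bu)$ on $\TT^{d-1}$ with respect to a partition of a bounded fundamental domain, with pure discrete spectrum. Pulling back along $\bpsi$ gives the result for $\nu$-almost every $\bM \in \fD$, noting that the generalized right eigenvectors of $\bM$ and $\bpsi(\bM)$ coincide because the defining nested intersection of cones in \eqref{eq:u} is invariant under grouping matrices into $k$-fold products. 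The principal obstacle is the first step, producing $\sigma^\star$ of prescribed incidence matrix $N^j$ with pure discrete spectrum; once that is in place, the remainder of the argument is essentially a structural transfer through $\bpsi$.
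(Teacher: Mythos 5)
Your overall architecture (accelerate by $k$, lift $k$-blocks of matrices to substitutions via a map $\bpsi$ intertwining $\Sigma^k$ with $\Sigma$, and reduce to Theorem~\ref{theo:main}) is the same as the paper's, but the proposal leaves the two substantive points of the proof unproved. The first, which you yourself flag as ``the principal obstacle'', is precisely the content that distinguishes this theorem from the conditional Theorem~\ref{theo:MCF}: one must show \emph{unconditionally} that for a nonnegative Pisot matrix $N$ there exist $j$ and a substitution $\sigma^\star$ with $M_{\sigma^\star}=N^j$ having pure discrete spectrum (equivalently, geometric coincidence). Saying that coincidence criteria are ``verifiable'' and that one has ``liberty to take $j$ large'' is not an argument: verifiability of a coincidence condition for a \emph{given} substitution does not yield the \emph{existence} of a substitution satisfying it, and no known general result (short of the open Pisot conjecture) provides this. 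The paper supplies exactly this missing step as Proposition~\ref{p:sigmatilde}, whose proof is a genuine construction: one orders the lattice points of a bounded balanced strip along the direction $\bu$, connects consecutive points by words of bounded discrepancy (Lemma~\ref{lem:word}, Meijer--Tijdeman), controls balancedness via Lemma~\ref{lem:balanced}, and then checks the effective geometric coincidence condition \eqref{e:gcc4} directly. Without an argument of this kind your proof is conditional and the theorem is not established.

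The second gap is the treatment of ergodicity of the accelerated system. Passing ``to an ergodic component of $\Sigma^k$'' and asserting that ``the components are cyclically permuted by $\Sigma$, so conclusions on one yield analogous statements on the others'' does not work as stated: Theorem~\ref{theo:main} must be applied to each ergodic component of $(\bpsi(\fD),\Sigma,\nu\circ\bpsi^{-1})$ separately, and its hypotheses require a periodic Pisot sequence with positive range \emph{inside that component} having pure discrete spectrum. Your single $\sigma^\star$ serves only the component(s) whose intersection with the follower sets of $[(\sigma^\star)^n]$ has positive measure; for the other components the relevant periodic sequences are the cyclic rotations $(N_i,\dots,N_{m-1},N_0,\dots,N_{i-1})^\infty$, whose products are conjugate to $N$ but not equal to it, so each needs its own substitution $\tau_i$ with geometric coincidence (this is why the paper applies Proposition~\ref{p:sigmatilde} to every rotation $\tilde{M}_{[i,k)}\tilde{M}_{[0,i)}$ and builds $s$ accordingly). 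Nor can the conclusion be transported between components by the cyclic permutation: $\bpsi\circ\Sigma^i$ regroups the matrices into $k$-blocks with an offset, and the resulting $S$-adic systems $X_{\bpsi(\Sigma^i\bM)}$ are not in any evident way conjugate to $X_{\bpsi(\bM)}$, since the substitutions chosen for the offset blocks bear no prescribed relation to those chosen for the unshifted blocks. Both gaps are repaired by the paper's route; your ``structural transfer'' part (Lyapunov exponents multiplied by $k$, transfer of positive range through $\bpsi^{-1}$, identification of the generalized right eigenvectors) is essentially correct.
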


\begin{remark}\label{rem:theo2}
Let $\bM = (M_n)$ and $\bpsi(\bM) = (\sigma_n)$. 
According to \eqref{eq:bpsidef}, the map $\bpsi$ in Theorem~\ref{theo:main2} can be chosen in a way that $M_{nk}\cdots M_{(n+1)k-1}$ is the incidence matrix of~$\sigma_n$. 
This choice is needed to derive Theorem~\ref{theo:MCF2} from Theorem~\ref{theo:main2}.
\end{remark} 
 
The main difference between the results in Section~\ref{sec:mainMCF} and the ones in Section~\ref{sec:mainSadic} is that in the latter case there can be several directive sequences in~$D$ with the same generalized right eigenvector (normalized w.r.t.\ $\|\cdot\|_1$).  

\subsection{Main results on natural codings and  bounded remainder sets} 
We now prove that natural codings with respect to bounded  fundamental domains (see Definition~\ref{def:NC}) provide bounded remainder sets and that, moreover, Rauzy fractals can be considered as canonical bounded remainder sets, up to some affine map.  In the following theorem, we need the fundamental domain~$\cF$ to be bounded and the partition of~$\cF$ to have $d$ atoms for a translation on~$\TT^{d-1}$. 
Recall that we set $\bx' = \pi'(\bx)$ for the projection $\pi'$ defined in (\ref{e:defpi}) and that $\lambda$ denotes the Lebesgue measure.

\begin{theorem} \label{t:nc}
Assume that $(X,\Sigma)$ is the natural coding of a minimal translation~$R_{\bt}$ on~$\TT^{d-1}$ w.r.t.\ a natural partition $\{\cF_1,\dots,\cF_d\}$ of a bounded fundamental domain~$\cF$.
Then the atoms  $\cF_1,\dots,\cF_d$ are bounded remainder sets of~$R_{\bt}$. 
Their Lebesgue measures are rationally independent.

If, moreover, $(X,\Sigma)$ is an $S$-adic dynamical system with $X  = X_{\bsigma}$ for some $\bsigma \in \cS_d^{\NN}$, then
\begin{itemize}
\item
$\bu=(\lambda(\cF_1),\ldots,\lambda(\cF_d))$ is a generalized right eigenvector of $\bsigma$,
\item  
there is an affine map $H: \RR^d \to \RR^{d-1}$ such that  $\cF_i = H(\cR_{\bsigma}(i))$ for $1\le i\le d$,
\item
$(X_{\bsigma},\Sigma)$ is a natural coding of~$R_{\bu'}$ w.r.t.\ the natural partition $\{-\cR'_{\bsigma}(i) \,:\, 1 \le i \le d\}$. 
\end{itemize}
Furthermore, if the directive sequence $\bsigma$ is left proper, then for each word $i_0 i_1 \cdots i_n  \in L_{\bsigma}$, the  ``cylinder set''  $\cF_{i_0} \cap R_{\bt}^{-1} \cF_{i_1} \cap \cdots \cap R_{\bt}^{-n} \cF_{i_n}$  is also  a bounded remainder set of~$R_{\bt}$; in particular, $-\cR'_{\bsigma}(i_0 i_1 \cdots i_n)$ is a bounded remainder set of~$R_{\bu'}$.
\end{theorem}

The result also holds if one replaces left properness by right properness. 

As mentioned in the introduction, the study of bounded remainder sets started with the work of W.~M.~Schmidt~\cite{Schmidt:74}. A~vast literature is devoted to the subject, see e.g.\ \cite{Ferenczi92,Grepstad-Lev,Liardet:87,Rauzy:84}. In the case of $S$-adic dynamical systems that are natural codings of a minimal translation on a torus, Theorem~\ref{t:nc} characterizes the bounded remainder sets for letters as affine images of $S$-adic Rauzy fractals and can  be considered as a partial converse to Theorems~\ref{theo:MCF}, \ref{theo:MCF2}, \ref{theo:main} and~\ref{theo:main2}. It shows that these bounded remainder sets ``extend to words'' in the sense that they can be subdivided in a natural way to provide bounded remainder sets for words as well. This yields a great variety of sets of bounded local discrepancy for Kronecker rotations on the torus. In \cite{Liardet:87}, it is shown that only ``trivial'' axis-parallel boxes can be bounded remainder sets for Kronecker sequences and toral translations. The bounded remainder sets constructed in \cite{Grepstad-Lev} are based on polytopes. In all these cases, the bounded remainder sets do not ``extend to words'' like ours.

We also note that such natural codings by $S$-adic dynamical systems provide (nonstationary) Markov partitions in the sense of \cite{Arnoux-Fisher:05} for automorphisms of the torus. We will pursue this in the forthcoming paper \cite{sadic3}.

Theorem~\ref{t:nc} leads us to state the following conjecture  stating, roughly speaking, that a bounded remainder set that ``extends to words'' must have fractal boundary.

\begin{conjecture}\label{conj:BRS}
Let $\{\cF_1,\dots,\cF_h\}$ be a natural partition of a minimal translation~$R_{\bt}$ on $\TT^{d-1}$, $d \ge 3$, such that  all sets $\cF_{i_0} \cap R_{\bt}^{-1} \cF_{i_1} \cap \cdots \cap R_{\bt}^{-n} \cF_{i_n}$,  $i_0 i_1 \cdots i_n \in \{1,\dots,h\}^*$, are bounded remainder sets for~$R_{\bt}$.
Then $\cF_i$ cannot have piecewise smooth boundaries $(1\le i\le h)$.
\end{conjecture}

One argument supporting this conjecture is the above-mentioned relation between natural codings and Markov partitions for automorphisms of the torus, and the fact that Markov partitions cannot have smooth boundaries for hyperbolic automorphisms of the torus in dimension $d\ge 3$, see \cite{Bow78}.

\medskip

After some preparations in Section~\ref{sec:defprep}, the proofs of all main results will be contained in Section~\ref{sec:proofs}. 
The proof of the $S$-adic results in Theorem~\ref{theo:main} and Theorem~\ref{theo:main2} will be  given in Section~\ref{subsec:ptheomain} and Section~\ref{sec:proof-theor-refth}, respectively. 
The results on multidimensional continued fractions, namely Theorems~\ref{theo:MCF} and~\ref{theo:MCF2}, will then be deduced from the corresponding $S$-adic results in Section~\ref{subsec:proofsMCF}. 
Finally, Theorem~\ref{t:nc} is proved in Section~\ref{subsec:proofBRS}. 

\section{Preparations for the proofs of the main theorems} \label{sec:defprep}

Throughout the proofs of our main results, we will need notation, definitions, and results that are recalled in this section.

\subsection{Properties of sequences of substitutions}\label{subsec:prop}
In our main theorems, we put certain assumptions, most notably, the Pisot condition from Definition~\ref{d:Pisot}. 
We will now discuss combinatorial properties that will be satisfied by almost all directive sequences $\bsigma$ under these assumptions. 
We need these combinatorial properties because they occur in some results from \cite{BST:19} that will be important for us. 
Accordingly, most of the definitions stated in the present subsection are taken from \cite[Section~2]{BST:19}.

Let $\bsigma = (\sigma_n) \in \cS_d^{\NN}$ be a sequence of substitutions over a given alphabet $\cA = \{1,\dots,d\}$. 
We say that $\bsigma$ is \emph{primitive}, if for each $k \in \NN$ there exists $n > k$ such that $M_{\sigma_{[k,n)}}$ is a positive matrix. 
If each factor $(\sigma_0, \ldots, \sigma_m)$, $m\in\NN$, occurs infinitely often in~$\bsigma$, then $\bsigma$ is \emph{recurrent}. 
As observed in~\cite[p.~91--95]{Furstenberg:60}, primitivity and recurrence of $\bsigma$ allow for an analog of the Perron--Frobenius theorem for the associated sequence $(M_{\sigma_n})$ of incidence matrices. 
In particular, if $\bsigma$ is primitive and recurrent, then the generalized right eigenvector $\bu$ defined in \eqref{eq:u} exists. 

A~sequence of substitutions $\bsigma$ is said to be \emph{unimodular} if the incidence matrices of the substitutions are unimodular.

Another important property is \emph{algebraic irreducibility}. 
A~sequence of substitutions $\bsigma = (\sigma_n)$ over the alphabet~$\cA$ is called algebraically irreducible if for each $k \in \NN$ the matrix $M_{\sigma_{[k,n)}}$ has irreducible characteristic polynomial provided that $n \in \NN$ is large enough. 
For $S$-adic dynamical systems that arise from multidimensional continued fraction algorithms which satisfy primitivity and the Pisot condition, we can (almost everywhere) prove a result that is even stronger than algebraic irreducibility; see Lemma~\ref{lem:sadic87}.


Finally, we require the language given by a sequence of substitutions to be balanced. 
More precisely, a~language~$L$ over a finite alphabet $\cA = \{1,\ldots,d\}$ is said to be \emph{$C$-balanced} if for each two words $w, w' \in L$ with $|w| = |w'|$ we have $\big||w|_i - |w'|_i\big| \le C$ for each $i \in \cA$.
It is called \emph{balanced} if it is $C$-balanced for some $C$. 
We define
\begin{equation}\label{eq_BC}
B_C = \{\bsigma \in \cS_d^\NN \,:\, \mbox{$L_{\bsigma}$ is $C$-balanced}\}.
\end{equation} The following lemma relates balancedness to boundedness of Rauzy fractals.


\begin{lemma}[cf.~{\cite[Lemma~4.1]{BST:19}}]\label{lem:balbound}
Let $\bsigma$ be a primitive sequence of substitutions with a generalized right eigenvector and $C\in\NN$. Then $\bsigma\in B_C$ implies that $\mathcal{R}_{\bsigma} \subset[-C, C]^{d} \cap \bone^\bot$.
\end{lemma}

We mention that unbounded Rauzy fractals were recently studied in \cite{Andrieu:20} for the case of the Arnoux-Rauzy $S$-adic dynamical systems discussed in Section~\ref{subsec:AR}.

We will need results from \cite{BST:19} which require a set of technical conditions that goes under the name \emph{Property PRICE},  which is an abbreviation for \underline{P}rimitivity, \underline{R}ecurrence, algebraic \underline{I}rreducibility, $\underline{C}$-balancedness, and recurrent left \underline{E}igenvector. 

\begin{definition}[Property PRICE]\label{def:PRICE}
A~directive sequence $\bsigma = (\sigma_n)\in \cS_d^{\NN}$ has Property PRICE if the following conditions hold for some strictly increasing sequences $(n_k)_{k\in\NN}$ and $(\ell_k)_{k\in\NN}$ and a vector $\mathbf{v} \in \RR_{\ge0}^d \setminus \{\mathbf{0}\}$.
\begin{itemize}
\labitem{(P)}{defP}
There exists $h \in \NN$ and a positive matrix~$M'$ such that $M_{\sigma_{[\ell_k-h,\ell_k)}} = M'$ for all $k \in \NN$.
\labitem{(R)}{defR}
We have $(\sigma_{n_k}, \dots,\sigma_{n_k+\ell_k-1}) = (\sigma_0, \dots,\sigma_{\ell_k-1})$, i.e., $\Sigma^{n_k} \bsigma \in [\sigma_0, \sigma_1, \dots,\sigma_{\ell_k-1}]$ for all $k\in\NN$.
\labitem{(I)}{defI}
The directive sequence~$\bsigma$ is algebraically irreducible.
\labitem{(C)}{defC}
There exists $C > 0$ such that the language of $\Sigma^{n_k+\ell_k}\bsigma$ is $C$-balanced, i.e., $\Sigma^{n_k+\ell_k}\bsigma \in B_C$ for all $k\in\NN$.
\labitem{(E)}{defE}
We have $\lim_{k\to\infty} \tr{\!}M_{\sigma_{[0,n_k)}} \bv / \|\tr{\!}M_{\sigma_{[0,n_k)}} \bv\|_1 = \bv$.
\end{itemize}
\end{definition}
We note that if $\bsigma$ satisfies Property PRICE, then $\Sigma\bsigma$ also satisfies Property PRICE by \cite[Lemma~5.10]{BST:19}.

\begin{remark}\label{rem:PeriodicPrice}
Since a unimodular Pisot substitution $\sigma$ is primitive by {\cite[Proposition~1.3]{CS:01}} and balanced by {\cite[Theorem~13~(1)]{Adamczewski:03}}, the constant sequence $(\sigma)$ satisfies Property PRICE with $\bv$ being the dominant left eigenvector of $M_\sigma$.
\end{remark}

\subsection{Tilings by Rauzy fractals and coincidence conditions} \label{sec:Rauzy}
As mentioned before, the Rauzy fractals defined in Section~\ref{sec:natur-codings-bound} play a crucial role in proving that the $S$-adic dynamical system $(X_{\bsigma},\Sigma)$ has purely discrete spectrum. 
The importance of Rauzy fractals is due to the fact that one can ``see'' on them the toral translation to which we want to conjugate (in the measure-theoretic sense) an $S$-adic dynamical system $(X_{\bsigma},\Sigma)$; this is worked out in \cite[Section~8]{BST:19}. 
In the substitutive case, the proof of this conjugacy strongly relies on a certain self-affinity property of the subtiles $\cR_{\bsigma}(i)$, $i\in \cA$; see e.g.\ \cite{SirventWang02}. 
In the $S$-adic case, these subtiles are no longer self-affine. 
However, they still satisfy a certain \emph{set equation} that allows to express them as unions of shrunk copies of subtiles $\cR_{\Sigma^n\bsigma}(i)$ corresponding to a shift of the original directive sequence~$\bsigma$.
More precisely, we have the following slight variant of \cite[Proposition~5.6]{BST:19}.

\begin{lemma}\label{lem:seteq}
If $\bsigma$ admits a generalized right eigenvector $\bu$ then
\begin{equation} \label{e:setequationkl}
\cR_{\bsigma}(i) = \bigcup_{p \in \cA^*,\, j \in \cA \,:\, p\,i \preceq \sigma_{[0,n)}(j)} \pi_{\bu} \big(\bl(p) + M_{\sigma_{[0,n)}} \cR_{\Sigma^n\bsigma}(j) \big) \qquad(i \in \cA,\, n \in \NN).
\end{equation}
\end{lemma}

Because the notation (and also the statement) of this lemma differs from \cite[Proposition~5.6]{BST:19}, we provide a full proof for the convenience of the reader.
Figures~\ref{fig:selmertribu} and \ref{fig:1} illustrate Rauzy fractals that are subdivided into subtiles according to Lemma~\ref{lem:seteq}.

\begin{proof}
Let $i \in \cA$, $n \in \NN$. 
According to \eqref{eq:RFactor}, $\cR_{\bsigma}(i)$ is the closure of the set of points of the form $\pi_{\bu}\, \bl(p')$, where $p'i$ is a prefix of  $\sigma_{[0,k)}(j')$ for infinitely many $k >n$, $j' \in \cA$. Since $\sigma_{[0,k)}(j')=\sigma_{[0,n)}\circ\sigma_{[n,k)}(j')$, we conclude that $p'i \preceq\sigma_{[0,k)}(j')$ if and only if $p'$ can be written as $p' = \sigma_{[0,n)}(\tilde{p})\,p$ with $\tilde{p}\,j \preceq \sigma_{[n,k)}(j')$, $p\,i \preceq \sigma_{[0,n)}(j)$ for some $\tilde{p},p \in \cA^*$, $j \in \cA$. Thus
$
\bl(p') = \bl(p) +  \bl(\sigma_{[0,n)}(\tilde{p})) =
\bl(p) +M_{\sigma_{[0,n)}} \bl(\tilde{p}) 
$
and, hence,
\[
\cR_{\bsigma}(i) = \hspace{-3em} \bigcup_{p \in \cA^*,\, j \in \cA \,:\, p\,i \preceq \sigma_{[0,n)}(j)} \hspace{-3em}
\big(\pi_{\bu}\,\bl(p) +\overline{\{ \pi_{\bu} M_{\sigma_{[0,n)}}\, \bl(\tilde p) \,:\, \mbox{$\tilde p\,j \preceq \sigma_{[n,k)}(j')$ for infinitely many $k>n$, $j' \in \cA$}\}}\big).
\]
It remains to show that the latter set is equal to $\pi_{\bu} M_{\sigma_{[0,n)}} \cR_{\Sigma^n\bsigma}(j)$.
It follows from \eqref{eq:u} that $\bu^{(n)} = M_{\sigma_{[0,n)}}^{-1}\bu$ is a generalized right eigenvector of $\Sigma^n\bsigma$. Since $\pi_{\bu^{(n)}}(\bx) = \bx$ for all $\bx \in
\bone^\perp$ and $\pi_{\bu^{(n)}} \bu^{(n)} = \mathbf{0} = \pi_\bu
M_{\sigma_{[0,n)}} \bu^{(n)}$ we have $\pi_\bu M_{\sigma_{[0,n)}} =\pi_\bu M_{\sigma_{[0,n)}}
\pi_{\bu^{(n)}}$, thus
\[
\begin{split}
&\overline{\{ \pi_{\bu} M_{\sigma_{[0,n)}}\, \bl(\tilde p) \,:\, \mbox{$\tilde p\,j \preceq \sigma_{[n,k)}(j')$ for infinitely many $k>n$, $j' \in \cA$}\} }\\
&\qquad=
\pi_{\bu} M_{\sigma_{[0,n)}}\overline{\{\pi_{\bu^{(n)}} \, \bl(\tilde p) \,:\, \mbox{$\tilde p\,j \preceq \sigma_{[n,k)}(j')$ for infinitely many $k>n$, $j' \in \cA$}\} } \\
&\qquad= \pi_{\bu} M_{\sigma_{[0,n)}} \cR_{\Sigma^n\bsigma}(j). \qedhere
\end{split}
\]
\end{proof}

An $S$-adic Rauzy fractal $\cR_{\bsigma}$ has thus two different kinds of natural subsets: the \emph{subtiles} $\cR_{\bsigma}(w)$ defined in \eqref{eq:RFactor} and the (level~$n$) \emph{subdivision tiles} $\pi_{\bu} \big(\bl(p) + M_{\sigma_{[0,n)}} \cR_{\Sigma^n\bsigma}(j)\big)$ occurring on the right hand side of~\eqref{e:setequationkl} for some $i \in \cA$. 
In this section, we will mostly use the subdivision tiles.

We will need the collection\footnote{Note that we cannot exclude a priori that different pairs $(\bx,i)$ give rise to the same set $\bx + \cR_{\bsigma}(i)$, i.e., that $\cC_{\bsigma}$ is a multiset and not a set. If $\cC_{\bsigma}$ forms a tiling, then this possibility is excluded.}
\[
\cC_{\bsigma} = \{\bx + \cR_{\bsigma}(i) \,:\, \bx \in \ZZ^d \cap \bone^\perp,\, i \in \cA\}.
\]
consisting of the translations of (the subtiles of) the Rauzy fractal $\cR_{\bsigma}$ by vectors in the lattice $\ZZ^d \cap \bone^\perp$.
As shown e.g.\ in \cite{BST:19}, the fact that $\cC_{\bsigma}$ forms a tiling of~$\bone^\perp$ implies that $(X_{\bsigma},\Sigma)$ has purely discrete spectrum. 
Here, a \emph{tiling}  of~$\bone^\perp$\ is a set of tiles that covers $\bone^\perp$ in a way that the intersection of any two distinct tiles has $(d{-}1)$-dimensional Lebesgue measure~$0$. 
Related results for the substitutive case are contained in \cite[Theorem~2]{Arnoux-Ito:01} and \cite[Theorem~3.8]{CS:01}; for the classical example that  initiated the whole theory we refer to~\cite{Rauzy:82}.

It is proved in \cite[Proposition~7.5]{BST:19} that, if Property PRICE holds, $\cC_{\bsigma}$ is a locally finite \emph{multiple tiling} of~$\bone^\perp$ by compact tiles (in the sense that a.e.\ point of $\bone^\perp$ is contained in exactly $m$ elements of~$\cC_{\bsigma}$ for some given $m\ge 1$). 
It is a priori not clear how to decide for a given directive sequence $\bsigma$ if this multiple tiling is actually a tiling. 
However, as shown in \cite[Section~7]{BST:19}, the following \emph{coincidence conditions} (whose meaning will be explained in Remark~\ref{rem:coinc}) can be used to get checkable criteria for this tiling property. 

\begin{definition}[{Geometric coincidence condition}]\label{def:gcc3}
A directive sequence $\bsigma= (\sigma_n)_{n\in\NN}$ satisfies the \emph{geometric coincidence condition} if for each $R > 0$, there is $k \in \NN$ such that, for all $n \ge k$, there exist $\bz_n \in \bone^\perp$, $i_n \in \cA$, such that
\begin{equation} \label{e:gcc3}
\begin{split}
\{\by \in \ZZ^d \,: & \ \|M_{\sigma_{[0,n)}}^{-1} (\by - \bz_n)\| \le R,\, 0 \le \langle \bone, \by\rangle < |\sigma_{[0,n)}(j)|\} \\
\subset \{\bl(p) \,: & \ p \in \cA^*,\, p\, i_n \preceq \sigma_{[0,n)}(j)\} 
\end{split}  \quad \mbox{for all}\ j \in \cA.
\end{equation}
(Recall that $w \preceq v$ means that $w$ is a prefix of~$v$.) 
\end{definition}

This geometric coincidence condition is a rephrasing of the more geometric variant defined in \cite[Section~2.11]{BST:19}. In this geometric setting, the condition ensures suitable growth properties of certain patches of parallelotopes that are defined by the dual $E_1^*(\sigma_{[0,n)})$ of the so-called one-dimensional geometric realization $E_1(\sigma_{[0,n)})$ of $\sigma_{[0,n)}$ for growing $n$.\footnote{The linear maps $E_1(\sigma_{[0,n)})$ and $E_1^*(\sigma_{[0,n)})$  are introduced in \cite{Arnoux-Ito:01}.}
Since we do not want to define discrete hyperplanes and dual substitutions here, we use equivalent statements with usual substitutions and abelianizations of words.

It turns out that the following version of the geometric coincidence condition taken from \cite[Proposition~7.9~(iv)]{BST:19} is more useful for our purposes.

\begin{definition}[{Effective version of the geometric coincidence condition}]\label{def:gcc4}
A directive sequence $\bsigma= (\sigma_n)_{n\in\NN}$ satisfies the \emph{effective version of the geometric coincidence condition} if 
there are $n \in \NN$, $\bz \in \bone^\perp$, $i \in \cA$, $C>0$, such that 
\begin{equation} \label{e:gcc4}
\Sigma^n\bsigma \in B_C, \
\begin{gathered}
\big\{\by \in \ZZ^d \,:\, \|\pi_{\mathbf{u}^{(n)}} M_{\sigma_{[0,n)}}^{-1} \by - \bz\|_\infty \le C,\, 0 \le \langle \bone, \by\rangle < |\sigma_{[0,n)}(j)|\big\} \\
\hspace{-11.5em}
\subset \big\{\bl(p) \,:\, p \in \cA^*,\, p\, i \preceq \sigma_{[0,n)}(j)\big\},
\end{gathered} \ \mbox{for all}\ j \in \cA,
\end{equation}
with $\bu^{(n)} = M_{\sigma_{[0,n)}}^{-1}\bu$.
\end{definition}

If $\sigma$ is a substitution for which the constant sequence $(\sigma)_{n\in\NN}$ satisfies the geometric coincidence condition, we say that $\sigma$ satisfies the geometric coincidence condition (and similarly for the effective version).

\begin{remark}\label{rem:coinc}
We want to motivate the geometric coincidence conditions of Definitions~\ref{def:gcc3} and~\ref{def:gcc4} and discuss how they imply that the multiple tiling~$\cC_{\bsigma}$ is a tiling (subject to Property PRICE; proofs will follow in Proposition~\ref{p:gccBST}). First note that these coincidence conditions are about control points of tiles and, in order to understand their meaning, it is useful to replace these control points by the associated tiles. For $n\in\NN$, let $\mathcal{T}_n$ be the collection of all $n$-th subdivision tiles (in the sense of \eqref{e:setequationkl}) of the tiles in~$\cC_{\bsigma}$. The geometric coincidence condition~\eqref{e:gcc3} states that, given $R>0$, for $n$ large enough, there is a subcollection~$\mathcal{P}_n$ consisting of all tiles of~$\mathcal{T}_n$ contained in a large ball (in terms of~$R$ and $M_{\sigma_{[0,n)}}$), such that $\mathcal{P}_n \subset \mathcal{Q}_n$, where $\mathcal{Q}_n$ is the collection of $n$-th subdivision tiles of $\cR_{\bsigma}(i_n)$ for some $i_n\in\cA$ (compare the range of the union in \eqref{e:setequationkl} to the right hand of side \eqref{e:gcc3}). Since it is known from \cite[Proposition~7.3]{BST:19} that the elements of $\mathcal{Q}_n$ are pairwise disjoint in measure (in particular, $\mathcal{Q}_n$ and thus $\mathcal{P}_n$ are sets), $\mathcal{T}_n$ is a multiple tiling that far enough inside $\mathcal{P}_n$ covers without overlaps. (Here, we need that $R$ is large enough to avoid that $M_{\sigma_{[0,n)}}^{-1}\mathcal{P}_n$ is covered again by tiles from $M_{\sigma_{[0,n)}}^{-1}(\mathcal{T}_n\setminus\mathcal{P}_n)$.)
Thus $\mathcal{T}_n$ is a tiling. As the tiles of $\cC_{\bsigma}$ are unions of tiles of $\mathcal{T}_n$, also $\cC_{\bsigma}$ is a tiling.

The size of the patch~$\mathcal{P}_n$ that we require in order to infer that $\cC_{\bsigma}$ is a tiling is determined by the largest diameter of the subtiles in the $n$-th subdivision of $\cR_{\bsigma}(i_n)$. This diameter is in turn determined by the balance constant $C$ of the language $L_{\Sigma^n\bsigma}$. This observation leads to the quantified version of geometric coincidence in~\eqref{e:gcc4}, which is also illustrated in Figure~\ref{fig:1}.
\end{remark}

The geometric coincidence condition can be seen as an $S$-adic analog of the geometric coincidence condition (or super-coincidence condition) in \cite{Barge-Kwapisz:06,Ito-Rao:06,CANTBST}, which provides a tiling criterion in the substitutive case. 
This criterion is a coincidence type condition in the same vein as the various coincidence conditions introduced in the usual Pisot framework; see e.g.\ \cite{Solomyak:97,AL11}.
The term ``coincidence condition'' goes back to Dekking~\cite{Dekking:1977} where it meant that the letters of the images of all letters under a substitution (of constant length) ``coincide'' at a certain position. The  letter $i_n$  in Definition \ref{def:gcc3} and \ref{def:gcc4} plays the role of this common coincidence letter. This condition was further developed and, in the substitutive case, it means that certain broken lines that can be associated with the multiple tiling $\cC_{\bsigma}$ ``coincide'', in the sense that they have at least one edge in common; see e.g.\ \cite{Barge-Kwapisz:06,Ito-Rao:06}.

Results from \cite{BST:19} that are central for our proofs are contained in the following proposition.

\begin{proposition}\label{p:gccBST}
Let $\bsigma \in \cS_d^{\NN}$ be a directive sequence satisfying Property PRICE.
Then the following assertions are equivalent.
\renewcommand{\theenumi}{\roman{enumi}}
\begin{enumerate}
\item \label{i:gcc1} 
The collection $\cC_{\bsigma}$ forms a tiling.
\item \label{i:gcc2} 
The collection $\cC_{\Sigma^n\bsigma}$ forms a tiling for some $n \in \NN$. 
\item \label{i:gcc2'} 
The collection $\cC_{\Sigma^n\bsigma}$ forms a tiling for all $n \in \NN$. 
\item \label{i:gcc3}  
The sequence $\bsigma$ satisfies the geometric coincidence condition.
\item \label{i:gcc4} 
The sequence $\bsigma$ satisfies the effective version of the geometric coincidence condition.
\end{enumerate}
\end{proposition}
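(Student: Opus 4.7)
The plan is to split the equivalence into two bundles: first the tiling equivalences $(\ref{i:gcc1}) \Longleftrightarrow (\ref{i:gcc2}) \Longleftrightarrow (\ref{i:gcc2'})$ that compare the tiling property across different shifts, and then the cycle $(\ref{i:gcc1}) \Longrightarrow (\ref{i:gcc3}) \Longrightarrow (\ref{i:gcc4}) \Longrightarrow (\ref{i:gcc1})$ passing through the coincidence conditions. Throughout, I would rely on the key fact established in~\cite[Section~7]{BST:19} that, under PRICE, $\cC_{\bsigma}$ is automatically a multiple tiling of $\bone^\perp$ with a well-defined integer covering multiplicity $m \ge 1$, so that proving it is a genuine tiling reduces to exhibiting an exclusive point, i.e.\ a point covered by exactly one tile.

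The first bundle follows from the standard self-affine set equation expressing each subtile $\cR_{\bsigma}(i)$ as a union of translates of $M_{\sigma_{[0,n)}}\cR_{\Sigma^n\bsigma}(j)$, indexed by the prefixes $p$ with $p\,i \preceq \sigma_{[0,n)}(j)$. Since $M_{\sigma_{[0,n)}}$ is invertible on~$\RR^d$, this equation intertwines $\cC_{\bsigma}$ with $\cC_{\Sigma^n\bsigma}$ through a bijective affine map on~$\bone^\perp$ (using that $\bu$ and $\bu^{(n)}$ are proportional under the appropriate normalisation); it follows that exclusive points transport from one collection to the other. Combined with the shift-invariance of PRICE from~\cite[Lemma~5.10]{BST:19}, which makes every $\cC_{\Sigma^n\bsigma}$ simultaneously a multiple tiling, this yields $(\ref{i:gcc1}) \Longleftrightarrow (\ref{i:gcc2}) \Longleftrightarrow (\ref{i:gcc2'})$.

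For the coincidence cycle, $(\ref{i:gcc3}) \Longrightarrow (\ref{i:gcc4})$ is essentially a specialisation: the uniform statement~\eqref{e:gcc3} applied with $R$ equal to the balance constant $C$ supplied by PRICE condition~\ref{defC} produces a single $n$ verifying~\eqref{e:gcc4}. The implication $(\ref{i:gcc4}) \Longrightarrow (\ref{i:gcc1})$ is the workhorse of the cycle: the set inclusion in~\eqref{e:gcc4} asserts that every lattice pair $(\by,j)$ whose projection lies in a controlled neighbourhood of~$\bz$ contributes to exactly one subtile $\cR_{\Sigma^n\bsigma}(i)$; unwinding the definition of $\cC_{\Sigma^n\bsigma}$ this produces an exclusive point for the shifted collection, hence $m = 1$ there and, by the first bundle, also for~$\cC_{\bsigma}$.

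The main obstacle is the direction $(\ref{i:gcc1}) \Longrightarrow (\ref{i:gcc3})$. Here, a single exclusive point of $\cC_{\bsigma}$ must be upgraded to an exclusive $R$-ball inside $\cC_{\Sigma^n\bsigma}$ for every prescribed $R > 0$ and every sufficiently large~$n$. Conceptually, the Pisot condition built into PRICE supplies exponential contraction of $M_{\sigma_{[0,n)}}^{-1}$ along~$\bone^\perp$, so that inflating the exclusive point through this contraction yields neighbourhoods of arbitrary Euclidean radius in the shifted tiling. The delicate part is keeping this neighbourhood aligned with $\ZZ^d \cap \bone^\perp$-translates of the subtiles as $n$ varies: this is precisely where the return structure~\ref{defR}, the uniform positive block~\ref{defP}, and the balance bound~\ref{defC} of PRICE combine to give the required uniform choice of $\bz_n$ and $i_n$ in~\eqref{e:gcc3}.
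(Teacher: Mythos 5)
Your proposal attempts to prove the full equivalence, whereas the paper essentially cites \cite[Lemma~7.2 and Proposition~7.9]{BST:19} and only writes out the implication (\ref{i:gcc4})$\Rightarrow$(\ref{i:gcc1}) in detail; the two steps you do spell out contain genuine gaps. In your ``workhorse'' step you claim that \eqref{e:gcc4} yields an exclusive point of the \emph{shifted} collection $\cC_{\Sigma^n\bsigma}$, and then come back to $\cC_{\bsigma}$ via your first bundle. That unwinding is not available: the window condition $0 \le \langle \bone, \by\rangle < |\sigma_{[0,n)}(j)|$ in \eqref{e:gcc4} is tailored to the level-$n$ subdivision of $\cC_{\bsigma}$ --- the admissible pairs $(\bl(p),j)$ with $p\,i \preceq \sigma_{[0,n)}(j)$ index the pieces $\pi'_{\bu}\big(\bl(p)+M_{\sigma_{[0,n)}}\cR_{\Sigma^n\bsigma}(j)\big)$ of the tiles of $\cC_{\bsigma}$ --- so what one actually gets (and what the paper proves) is that $\pi'_{\bu} M_{\sigma_{[0,n)}} \bz$ is exclusive in $\cC_{\bsigma}$ itself. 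Whether $\bz$ lies in a translate $\bx+\cR_{\Sigma^n\bsigma}(j)$ with $\bx \in \ZZ^d\cap\bone^\perp$ involves points $\by = M_{\sigma_{[0,n)}}(\bx+\bl(q))$ whose coordinate sum need not fall in that window, so \eqref{e:gcc4} says nothing about the tiles of $\cC_{\Sigma^n\bsigma}$ covering $\bz$. Your first bundle has the same flaw: the affine bijection $\pi'_{\bu} M_{\sigma_{[0,n)}}$ on $\bone^\perp$ does \emph{not} carry $\cC_{\Sigma^n\bsigma}$ onto the level-$n$ subdivision of $\cC_{\bsigma}$, because the pieces of that subdivision are translates of $\cR_{\Sigma^n\bsigma}(j)$ by the vectors $\pi'_{\bu^{(n)}} M_{\sigma_{[0,n)}}^{-1}(\bx+\bl(p))$, which are generally not in $\ZZ^d\cap\bone^\perp$; the fact that $\cC_{\bsigma}$ and $\cC_{\Sigma^n\bsigma}$ have the same covering degree is exactly the nontrivial content of \cite[Lemma~7.2]{BST:19} and cannot be read off from invertibility of the incidence matrix.

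The remaining implications are also thinner than they look. For (\ref{i:gcc1})$\Rightarrow$(\ref{i:gcc3}) you invoke ``the Pisot condition built into PRICE'' and exponential contraction, but PRICE is the purely combinatorial package \ref{defP}, \ref{defR}, \ref{defI}, \ref{defC}, \ref{defE} and contains no Lyapunov hypothesis; what it yields is strong convergence ($\pi'_{\bu} M_{\sigma_{[0,n)}}\be_i \to \mathbf{0}$, via balancedness), not exponential decay, and in any case upgrading a single exclusive point to the arbitrarily large sets of prefix abelianizations demanded by \eqref{e:gcc3}, uniformly in $n$, is the substantial argument of \cite[Proposition~7.9]{BST:19}, which your sketch does not supply. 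Likewise (\ref{i:gcc3})$\Rightarrow$(\ref{i:gcc4}) is not a mere specialization $R=C$: \eqref{e:gcc3} bounds $\|M_{\sigma_{[0,n)}}^{-1}(\by-\bz_n)\|$ while \eqref{e:gcc4} bounds the projected quantity $\|\pi'_{\bu^{(n)}} M_{\sigma_{[0,n)}}^{-1}\by - \bz\|_\infty$ and additionally requires $C$-balancedness of $L_{\Sigma^n\bsigma}$ for the chosen $n$, so one must both compare the projected and unprojected norms and pick $n$ along the subsequence where \ref{defC} applies. The economical route is the paper's: quote \cite{BST:19} for the equivalence and for the multiple tiling property, and give the direct argument that \eqref{e:gcc4} forces $\pi'_{\bu} M_{\sigma_{[0,n)}} \bz$ to be an exclusive point of $\cC_{\bsigma}$.
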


\begin{figure}[ht]
\includegraphics[width=.35\textwidth]{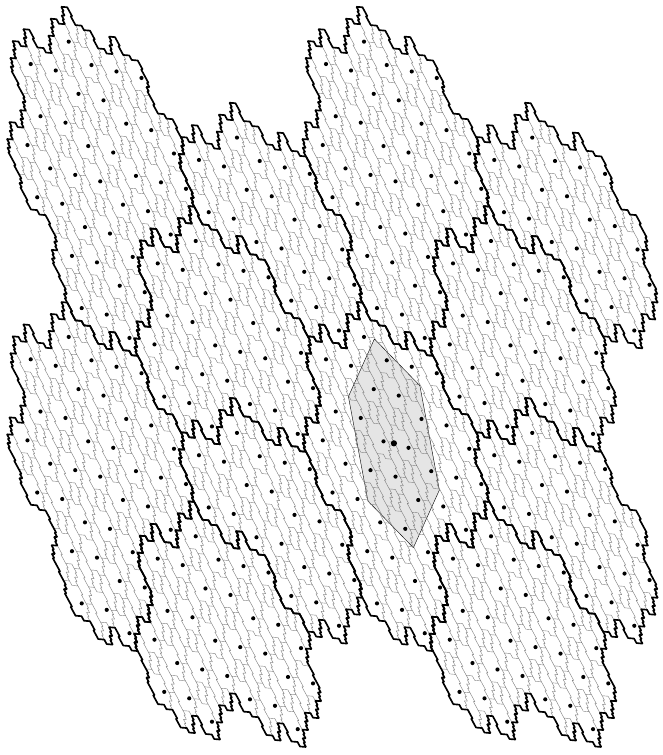}
\caption{Illustration of the proof of Proposition~\ref{p:gccBST} (\ref{i:gcc4})~$\Rightarrow$~(\ref{i:gcc1}). The large tiles are the tiles of~$\cC_{\bsigma}$, the marked points are the translation points of their level~$n$ subdivision tiles (these tiles are drawn in grey; up to three subdivision tiles can share the same translation point in this three-letter example). Because $\Sigma^n \bsigma \in B_C$, these level~$n$ subdivision tiles are bounded in terms of~$C$; here $C=2$. More precisely, the given point $\pi_{\bu} M_{\sigma_{[0,n)}} \bz$ can only be contained in level~$n$ subdivision tiles whose translation points are contained in the (shaded) parallelepiped $\pi_{\bu} M_{\sigma_{[0,n)}} (\bz+ [-C,C]^3 \cap \bone^\perp)$. 
All translation points inside the shaded parallelepiped belong to level~$n$ subdivision tiles of the same tile of~$\cC_{\bsigma}$, namely $\cR_{\bsigma}(i)$; this is the effective version of the geometric coincidence condition. Therefore, $\pi_{\bu} M_{\sigma_{[0,n)}} \bz$ belongs only to level~$n$ subdivision tiles of a single tile of~$\cC_{\bsigma}$. Thus it is an exclusive point of~$\cC_{\bsigma}$.}
\label{fig:1}
\end{figure}

\begin{proof}
This result is proved in \cite{BST:19} but, because our equivalent assertions somewhat differ from the ones in \cite[Proposition~7.9]{BST:19}, we give some details here. 
For given $\bw \in \RR^d_{\ge 0}\setminus\{\mathbf{0}\}$ and $\bsigma \in \cS_d^{\NN}$, we define in  \cite[Section~2.10]{BST:19} a collection $\cC_{\bsigma,\bw}$ similarly to~$\cC_{\bsigma}$.
However, the elements of~$\cC_{\bsigma,\bw}$ are Rauzy fractals that are projected to~$\bw^\bot$. (The detailed definition, which requires some notation, is not relevant for us and we refrain from stating it.) These collections are of particular importance when $\bw$ is equal to the generalized left eigenvector from Definition~\ref{def:PRICE}\ref{defE}. Indeed, letting $\bv$ and $\bv^{(n)}$ be generalized left eigenvectors of $\bsigma$ and $\Sigma^n\bsigma$, respectively, we can use results from \cite{BST:19} to gain that, for each $n\in\NN$,\footnote{Note the different notation in \cite{BST:19}: $\cC_{\bsigma,\bv}=\cC_{\bv}$ and $\cC_{\Sigma^n\bsigma,\bv^{(n)}}=\cC_{\bv}^{(n)}$.}
\begin{align*}
\cC_{\bsigma} & \hbox{ forms a tiling of $\bone^\bot$}  \\
& \Longleftrightarrow   \quad \mbox{$\cC_{\bsigma,\bv}$ forms a tiling of $\bv^\bot$} & & \mbox{\cite[Proposition~7.5]{BST:19}} \\
& \Longleftrightarrow   \quad \mbox{$\cC_{\Sigma^n\bsigma,\bv^{(n)}}$ forms a tiling of $(\bv^{(n)})^\bot$}  & & \mbox{\cite[Lemma~7.2]{BST:19}}
\\
& \Longleftrightarrow   \quad \mbox{$\cC_{\Sigma^n\bsigma}$ forms a tiling of $\bone^\bot$} & & \mbox{\cite[Proposition~7.5]{BST:19}}.
\end{align*}
These equivalences prove that (\ref{i:gcc1})~$\Leftrightarrow$~(\ref{i:gcc2})~$\Leftrightarrow$~(\ref{i:gcc2'}).
%
The equivalences (\ref{i:gcc1})~$\Leftrightarrow$~(\ref{i:gcc3})~$\Leftrightarrow$~(\ref{i:gcc4}) are treated in \cite[Proposition~7.9]{BST:19}.
However, the proof of the implication (\ref{i:gcc4})~$\Rightarrow$~(\ref{i:gcc1}) in \cite{BST:19} is somewhat sketchy. 
Since this implication will be of particular importance in the sequel, and in order to further explain the (effective version of the) geometric coincidence condition, we give a more detailed proof of it, which is illustrated in Figure~\ref{fig:1}.

\bigskip

\noindent
\textit{Proof of the implication (\ref{i:gcc4})~$\Rightarrow$~(\ref{i:gcc1}).} Let $\bu$ be a  generalized right eigenvector of~$\bsigma$, which exists because Property PRICE implies that $\bsigma$ is primitive and recurrent. 
Assume that there are $n \in \NN$, $\bz \in \bone^\perp$, $i \in \cA$, $C>0$, such that \eqref{e:gcc4} holds. 
We show that $\pi_{\bu} M_{\sigma_{[0,n)}} \bz$ is an {\it exclusive point} of the collection~$\cC_{\bsigma}$ in the sense that it is contained in only one element of $\cC_{\bsigma}$. 
Since \cite[Proposition~7.5]{BST:19} states that $\cC_{\bsigma}$ is a locally finite multiple tiling by compact tiles, this will already imply that $\cC_{\bsigma}$ is in fact a tiling,  because the compactness of the tiles together with local finiteness yield that each exclusive point has a neighborhood consisting of exclusive points.
Since $\cC_{\bsigma}$ forms a multiple tiling and, hence, a covering of~$\bone^\bot$, we have $\pi_{\bu} M_{\sigma_{[0,n)}} \bz \in \bx + \cR_{\bsigma}(i')$ for some $(\bx,i') \in (\ZZ^d \cap \bone^\perp) \times \cA$. 
To prove exclusivity, we have to show that this choice of $(\bx,i')$ is unique. 
By the set equation in Lemma~\ref{lem:seteq} for $\cR_{\bsigma}(i')$, there exist $p' \in \cA^*$, $j' \in \cA$ with 
\begin{equation}\label{eq:piprefj'}
p' i' \preceq \sigma_{[0,n)}(j')
\end{equation}
 such that
\begin{equation}\label{eq:purauzy}
\pi_{\bu} M_{\sigma_{[0,n)}} \bz \in \pi_{\bu} \big(\bx + \bl(p') + M_{\sigma_{[0,n)}} \cR_{\Sigma^n\bsigma}(j')\big).
\end{equation}
As in the proof of Lemma~\ref{lem:seteq}, note that $\pi_{\bu} M_{\sigma_{[0,n)}} = \pi_{\bu}\, M_{\sigma_{[0,n)}} \pi_{\bu^{(n)}}$, where $\bu^{(n)}=M_{\sigma_{[0,n)}}^{-1}\bu$ is a generalized right eigenvector of $\Sigma^n\bsigma$.
Therefore, \eqref{eq:purauzy} implies that
\begin{equation} \label{eq:zyDiff2}
\pi_{\bu} M_{\sigma_{[0,n)}} \bz \in \pi_{\bu} M_{\sigma_{[0,n)}} \big(\pi_{\bu^{(n)}} M_{\sigma_{[0,n)}}^{-1} \big(\bx + \bl(p')\big) + \cR_{\Sigma^n\bsigma}(j')\big).
\end{equation}
Since $\bu \in M_{\sigma_{[0,n)}} \RR_{\ge0}^d\setminus\{\mathbf{0}\}$ implies that $\bu \notin M_{\sigma_{[0,n)}} (\bone^\perp)$, the mapping $\pi_{\bu} M_{\sigma_{[0,n)}}|_{\bone^\perp}: \bone^\perp\to \bone^\perp$ is a bijection. Therefore, and because $\bz$, $\pi_{\bu^{(n)}} M_{\sigma_{[0,n)}}^{-1} \big(\bx + \bl(p')\big)$, and $\cR_{\Sigma^n\bsigma}(j')$ are contained in~$\bone^\perp$, \eqref{eq:zyDiff2} is equivalent to
\begin{equation}\label{eq:projgone}
\bz \in \pi_{\bu^{(n)}} M_{\sigma_{[0,n)}}^{-1} \big(\bx + \bl(p')\big) + \cR_{\Sigma^n\bsigma}(j').
\end{equation}
Because we assume \eqref{e:gcc4}, we have $\Sigma^n \bsigma\in B_C$ and thus Lemma~\ref{lem:balbound} implies that $\|\by\|_\infty \le C$ for all $\by \in \cR_{\Sigma^n\bsigma}$, hence, \eqref{eq:projgone} yields
\[
\|\pi_{\bu^{(n)}} M_{\sigma_{[0,n)}}^{-1} \big(\bx + \bl(p')\big) - \bz\|_\infty
\le C.
\]
Since $\langle \bone, \bx + \bl(p')\rangle = \langle \bone, \bl(p')\rangle = |p'| < |\sigma_{[0,n)}(j')|$, by \eqref{e:gcc4} we may conclude that $\bx+\bl(p') = \bl(p)$ for some $p \in \cA^*$ with $p\,i \preceq\sigma_{[0,n)}(j')$.
In particular, we have $|p'| = |p|$.
Since $p'i'$ is also a prefix of $\sigma_{[0,n)}(j')$ by \eqref{eq:piprefj'}, we obtain that $p' = p$ and $i' = i$, thus $\bx = \mathbf{0}$.
Therefore, $(\bx,i')=(\mathbf{0},i)$ is the only possible choice for $(\bx,i')$ and, hence, $\mathbf{0} + \cR_{\bsigma}(i)$ is the only tile of the collection~$\cC_{\bsigma}$ containing $\pi_{\bu} M_{\sigma_{[0,n)}} \bz$. This proves that $\pi_{\bu} M_{\sigma_{[0,n)}} \bz$ is an exclusive point of $\cC_{\bsigma}$ and, hence, yields that the collection $\cC_{\bsigma}$ is a tiling (and, a fortiori, that all elements of~$\cC_{\bsigma}$ are different). This concludes the proof of the implication (\ref{i:gcc4})~$\Rightarrow$~(\ref{i:gcc1}).
\end{proof}

\subsection{Purely discrete spectrum implies geometric coincidence}
In our main theorems, substitutive dynamical systems with purely discrete spectrum play a  key role. 
The following lemma shows that in the substitutive case purely discrete spectrum is equivalent to the geometric coincidence  condition, and thus, by Proposition~\ref{p:gccBST}, also to its effective version. 
This will be crucial in the proofs of Theorems~\ref{theo:main} and~\ref{theo:main2}; see also the discussion before Lemma~\ref{lem:tau}. 
Indeed, let $\tau$ be a unimodular Pisot substitution that satisfies the geometric coincidence condition. 
We will show that the existence of occurrences of long blocks of~$\tau$ in a given directive sequence~$\bsigma$ allows to  ``transfer'' the effective version of the coincidence condition from $\tau$ to~$\bsigma$. 
Using the following lemma, this ``transfer'' works for purely discrete spectrum property as well.

\begin{lemma}\label{rem:spectrumEquiv}
Let $\sigma$ be a unimodular Pisot substitution. 
Then $(X_\sigma,\Sigma)$ has purely discrete spectrum if and only if $\sigma$ satisfies the geometric coincidence condition.
\end{lemma}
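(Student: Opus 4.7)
The plan is to view the Pisot substitution~$\sigma$ as generating the constant directive sequence $\bsigma = (\sigma)_{n\in\NN} \in \cS_d^\NN$ and to invoke Proposition~\ref{p:gccBST}. Note that $\bsigma$ is periodic, so the $S$-adic system $(X_\bsigma,\Sigma)$ coincides with the substitutive system $(X_\sigma,\Sigma)$, and the collection $\cC_\bsigma$ coincides with the usual collection of lattice translates of the subtiles of the classical Rauzy fractal of~$\sigma$.

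First I would verify that $\bsigma$ satisfies Property PRICE from Section~\ref{subsec:prop}. Since $\sigma \in \cS_d$, its incidence matrix $M_\sigma$ is unimodular with Pisot dominant eigenvalue, hence primitive, and some power $M_\sigma^h$ is strictly positive; this gives (P) with $B = M_\sigma^h$, and (R) is trivial because $\bsigma$ is constant. Algebraic irreducibility (I) is immediate from the fact that the characteristic polynomial of~$M_\sigma$ is (a power of) the minimal polynomial of a Pisot unit. The balance condition (C) is the well-known fact that the language of a (unit) Pisot substitution is $C$-balanced for some $C>0$. Finally, (E) holds with $\mathbf{v}$ the dominant right eigenvector of~$M_\sigma$ by the Perron--Frobenius theorem. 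Proposition~\ref{p:gccBST} then says that $\bsigma$ satisfies the geometric coincidence condition if and only if $\cC_\bsigma$ is a tiling of $\bone^\perp$, so all that remains is to extract a tiling statement from the purely discrete spectrum hypothesis.

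For the final step I would appeal to the classical equivalence, for unit Pisot substitutions, between purely discrete spectrum of $(X_\sigma,\Sigma)$ and the tiling property of the associated Rauzy fractal collection; this is the content of the results of Barge--Kwapisz and Ito--Rao \cite{Barge-Kwapisz:06,Ito-Rao:06}, see also the account in~\cite{CANTBST}. Under the hypothesis that $(X_\sigma,\Sigma)$ has purely discrete spectrum, this equivalence yields that $\cC_\bsigma$ tiles, and Proposition~\ref{p:gccBST} then produces the geometric coincidence condition for~$\bsigma$, which is exactly the geometric coincidence condition for~$\sigma$.

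The main obstacle is precisely the invocation of the last classical equivalence; everything else is bookkeeping, verifying assumptions of pre-existing tools. The check one must do is that the definition of tiling and of Rauzy fractal in those references matches the one used here. Because this paper's Rauzy fractal lives in~$\bone^\perp$ and the setup $\sigma\in\cS_d$ is unimodular (so we are automatically in the unit Pisot case), the match is direct and no additional work with $p$-adic factors is required.
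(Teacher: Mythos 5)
Your proposal is correct and follows essentially the same route as the paper's proof: pass from purely discrete spectrum to the tiling property of $\cC_{\sigma}$ via the Barge--Kwapisz results, then invoke Proposition~\ref{p:gccBST} for the (constant, PRICE-satisfying) directive sequence to obtain geometric coincidence. The only point the paper spells out that you compress into ``the classical equivalence'' is that the Barge--Kwapisz criterion concerns the tiling \emph{flow}, so one needs the Clark--Sadun result \cite[Theorem~3.1]{CS:03} to transfer purely discrete spectrum from the shift $(X_\sigma,\Sigma)$ to the flow; your explicit verification of PRICE is harmless extra bookkeeping that the paper leaves implicit.
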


\begin{proof} 
Assume that $(X_\sigma,\Sigma)$ has purely discrete spectrum.\footnote{Since there does not seem to exist a direct proof of the fact that purely discrete spectrum of $(X_\sigma,\Sigma)$ implies the geometric coincidence condition, we have to take the deviation via tiling flows in the proof of this lemma. Because tiling flows will play no role in this paper, we refrain from giving detailed definitions and refer e.g.\ to~\cite{Barge-Kwapisz:06}.} As $\sigma$ is primitive, the elements of $(X_\sigma,\Sigma)$ have sublinear complexity by \cite[Proposition~5.12]{Queffelec:10}, by \cite[Proposition~5.1.12]{Fog02} we can uniquely extend a.e.\ sequence in $(X_\sigma,\Sigma)$ to a two-sided infinite sequence having the same language. Hence, it is immaterial if we define $(X_\sigma,\Sigma)$ by using one- or two-sided infinite sequences. 

Next we claim that $(X_\sigma,\Sigma)$ has purely discrete spectrum if and only if the tiling flow $(\mathcal{T}_\sigma,T)$ associated with $(X_\sigma,\Sigma)$ has purely discrete spectrum. To prove sufficiency, assume that $(X_\sigma,\Sigma)$ has  purely discrete spectrum. Then $(X_\sigma,\Sigma)$ is measurably conjugate to a translation $x\mapsto x + \alpha$ on a compact abelian group $G$ via a measurable conjugacy $\Phi$.
Let $(\tilde X_\sigma, \tilde T)$  be the suspension flow with constant roof function $f(\omega)\equiv c$. Then $(\tilde X_\sigma, \tilde T)$ is measurably conjugate to the translation $\tilde x \mapsto \tilde x + (0,ct)$ on the compact abelian group $\tilde G = (G \times \RR)/\sim$, with
$(g,c) \sim (g+\alpha,0)$, via the measurable conjugacy $\Phi \times {\rm id}$.
Thus, by a slight variation of \cite[Theorem~3.5]{Walters:82}, $(\tilde X_\sigma, \tilde T)$ has purely discrete spectrum. If the constant~$c$ is chosen properly, \cite[Corollary~5.7]{Barge-Kwapisz:06} shows that $(\tilde X_\sigma, \tilde T)$ and the tiling flow $(\mathcal{T}_\sigma,T)$ associated with $(X_\sigma,\Sigma)$ are conjugate; see also \cite[Corollary~3.2]{CS:03}. Thus, $(\mathcal{T}_\sigma,T)$ has purely discrete spectrum. Necessity is due to \cite[Corollary~5.2]{SS:02}.

Next we establish that the tiling flow $(\mathcal{T}_\sigma,T)$ has purely discrete spectrum  if and only  the substitution~$\sigma$ satisfies the geometric coincidence condition.  Indeed, according to \cite[Corollary~9.4]{Barge-Kwapisz:06}, $(\mathcal{T}_\sigma,T)$ has purely discrete spectrum if and only if  the so-called \emph{coincidence rank} of~$T$ is equal to~$1$.\footnote{The fact that the coincidence rank is equal to one is the analog of our geometric coincidence condition in the setting of flows, see~\cite[Section~7]{Barge-Kwapisz:06}.} This, by {\cite[Remark~18.5]{Barge-Kwapisz:06}}, is in turn equivalent to the fact that the collection~$\cC_{(\sigma)}$ of (substitutive) Rauzy fractals associated with the constant sequence~$(\sigma)$ forms a tiling. Finally, because the constant sequence $(\sigma)$ satisfies Property PRICE by Remark~\ref{rem:PeriodicPrice}, Proposition~\ref{p:gccBST}  shows that this tiling property holds if and only if the substitution~$\sigma$ satisfies the geometric coincidence condition.

This chain of equivalences proves the lemma.

%
\end{proof}

\subsection{Balance and bounded remainder sets} \label{subsec:balancewords}
In the sequel, we will strongly rely on the relation between balance and bounded remainder sets. We are interested in bounded remainder sets given by arbitrary words and not only by letters. Therefore, we also consider balance for words:
A~language~$L$ is {\em balanced for the word $v \in L$} if there exists some $C_v \ge 1$ such that, for any two words $w, w' \in L$ with $|w| = |w'|$, we have $\big||w|_v - |w'|_v\big| \le C_v$, and $L$ is \emph{balanced for words} if it is balanced on each $v \in L$.
Here, $|w|_v$ denotes the number of occurrences of the factor~$v$ in~$w$.  
Without further precision, balance will always refer to letters. We note that, in case a directive sequence $\bsigma$ is primitive and proper, balance for letters of the language  $L_{\bsigma}$ implies its balance for all words; see \cite[Corollary~5.5]{BCDLPP:19}.

The quantity~$C$ occurring in the definition of bounded remainder sets (i.e., in Definition~\ref{def:BRS}) can be considered as a notion of local discrepancy; see e.g.\ \cite{Adamdis}. 
To illustrate this, we characterize balance by the following geometric version of \cite[Proposition~7]{Adamczewski:03}, using the projection~$\pi_{\bu}$ defined in (\ref{def:piu}).
For $\bu \in \RR_+^d$ with $\|\bu\|_1 = 1$, we have $\pi_{\bu}\,\bl(w) = \bl(w)-|w|\,\bu$, which is a geometric version of local discrepancy when $\bu$ is a letter frequency vector.


\begin{proposition} \label{prop:balance}
Let $(X,\Sigma)$ be a uniquely ergodic minimal shift over the alphabet $\cA = \{1,\ldots,d\}$. 
Let $\bu = (u_1,\ldots, u_d)$ be the vector whose entry $u_i$ equals the measure of the cylinder $[i]$ for each $i \in \cA$. Then the language of~$X$ is balanced for letters if and only if $\sup\{\|\pi_{\bu}\,\bl(w)\|\,:\, \mbox{$w$ in the language of $X$}\}$ is bounded. 
Moreover, $(X,\Sigma)$ is balanced for the word~$v$ if and only if the cylinder $[v]$ is a bounded remainder set.
\end{proposition}

 \begin{proof}
Let $L$ be the language of $X$ and denote the unique $\Sigma$-invariant measure of $(X,\Sigma)$ by~$\mu$. 
 
We start with the proof of the second assertion.
Assume first that $v\in L$ is chosen in a way that $[v]$ is a bounded remainder set, and let $w,w'\in L$ with $|w|=|w'|=m$ be given. Choose $x = x_0x_1\cdots\in X$. Then, by minimality, there exist $n,n' \in \NN$ such that $\Sigma^n x \in [w]$ and $\Sigma^{n'} x \in [w']$. Thus, because $[v]$ is a bounded remainder set, 
\begin{align*}
\big| |w|_v - |w'|_v   \big| & \le  \big|  |x_0\cdots x_{n+m-1}|_v - |x_0\cdots x_{n-1}|_v - | x_0\cdots x_{n'+m-1}|_v +  |x_0\cdots x_{n'-1}|_v    \big| + 2(|v|-1) \\
& \le \mu([v])\, \big|  (n+m) - n - (n'+m) +  n'\big|_v +2(|v|-1)+4C = 2(|v|-1)+4C
\end{align*}
holds for some $C>0$. (The summand $2(|v|-1)$ comes from occurrences of~$v$ in $x$ that partially overlap with $x_0\cdots x_{n-1}$ or with $x_0\cdots x_{n'-1}$.) Thus $L$ is $(2(|v|-1)+4C)$-balanced for the word $v$. 

Assume now that $L$ is $C$-balanced for~$v$, and let $x=x_0x_1\cdots\in X$ be generic for the measure $\mu$.
Then we have
\[
\big| |x_0\cdots x_{n-1}|_v - n\, \mu([v])\big| = \lim_{m\to\infty} \Big||x_0\cdots x_{n-1}|_v - \frac{1}{m} |x_0 \cdots x_{mn-1}|_v\Big| 
\]
for all $n \in \NN$ because $x$ is generic; moreover,
\[
0 \le |x_0 \cdots x_{mn-1}|_v - \sum_{k=0}^{m-1} |x_{kn} \cdots x_{(k+1)n-1}|_v \le (m-1)\,(|v|-1)
\]
for all $m,n \in \NN$ because we only have to count the number of  occurrences of~$v$ at positions $kn-h$, $1 \le k < m$, $1 \le h < |v|$, and
\[
\Big|m\, |x_0\cdots x_{n-1}|_v - \sum_{k=0}^{m-1} |x_{kn} \cdots x_{(k+1)n-1}|_v\Big| \le m C
\]
by the $C$-balancedness for~$v$.
Putting everything together, we obtain that 
\begin{equation} \label{eq:vwbal}
\big||x_0\cdots x_{n-1}|_v - n\, \mu([v])\big| \le C + |v|-1
\end{equation}
for all $n \in \NN$, thus $[v]$ is a bounded remainder set. 


To prove the first assertion, assume that $\sup\{\|\pi_{\bu}\,\bl(w)\|_\infty\,:\, \mbox{$w\in L$}\} = C$ (w.l.o.g.\ we may use the $\infty$-norm). Let $w,w'\in L$ with $|w|=|w'|$ be given. Then $\bl(w)-\bl(w') \in \bone^\bot$ and, hence,
 \[
 \max_{i\in \cA}(\big| |w|_i - |w'|_i \big|) = \|\bl(w) - \bl(w')\|_\infty= \|\pi_{\bu}(\bl(w) - \bl(w'))\|_\infty \le 2C.
 \]
Thus $L$ is $(2C)$-balanced for letters. Now assume that $L$ is $C$-balanced for letters. 
Then, in the same way as we derived \eqref{eq:vwbal}, we gain
$\max_{i\in \cA} \big(\big| |w|_i -|w| u_i \big|\big) = \max_{i\in \cA} \big(\big| |w|_i -|w| \mu([i]) \big|\big) \le  C$
for all $w\in L$. Since $\pi_{\bu} \be_i = \be_i - \bu$ holds for each $i\in\cA$, we have $\pi_{\bu}\,\bl(w) = (|w|_i-|w|u_i)_{i\in\cA}$ for $w \in \cA^*$. Thus $\sup\{\|\pi_{\bu}\,\bl(w)\|_\infty\,:\, \mbox{$w\in L$}\} \le C$.
 \end{proof}


\section{Proofs of the main results}\label{sec:proofs}
This section contains the proofs of all our main results. 
In Sections~\ref{subsec:ptheomain} and~\ref{sec:proof-theor-refth}, we prove the results stated in Section~\ref{sec:mainSadic} on shifts of directive sequences. 
In Section~\ref{subsec:proofsMCF}, we will use these results to derive the theorems on multidimensional continued fraction algorithms formulated in Section~\ref{sec:mainMCF}. Section~\ref{subsec:proofBRS} is devoted to the proof of Theorem~\ref{t:nc} on natural codings and bounded remainder sets.

\subsection{Proof of Theorem \ref{theo:main}} \label{subsec:ptheomain}
For convenience, we recall the assumptions of Theorem~\ref{theo:main}. 
Let $D \subset \cS_d^{\NN}$ be a shift-invariant set of directive sequences equipped with an ergodic $\Sigma$-invariant Borel probability measure~$\nu$ satisfying $\nu \circ \Sigma \ll \nu$. 
Assume that 
\begin{itemize}
\item
the linear cocycle $(D,\Sigma,Z,\nu)$ defined by $Z((\sigma_n)_{n\in\NN}) = \tr{\!}M_{\sigma_0}$ satisfies the Pisot condition;
\item
there is a periodic Pisot sequence with purely discrete spectrum and positive range in $(D,\Sigma,\nu)$. 
\end{itemize}
We first show that under these assumptions  $\nu$-almost all $\bsigma \in D$ satisfy Property PRICE. 
To this end, we need the following auxiliary results.

\begin{lemma}[{cf.~\cite[Lemma~8.7]{BST:19}}]\label{lem:sadic87}
Let the assumptions of Theorem~\ref{theo:main} be in force. If $\nu$-almost all $(\sigma_n)\in D$ are primitive, then for $\nu$-almost every sequence 
$(\sigma_n) \in D$, for each $k \in \NN$, the characteristic polynomial of $M_{\sigma_{[k,n)}}$ is the minimal polynomial of a Pisot unit for all sufficiently large $n\in\NN$.
\end{lemma}

Contrary to the assumptions in {\cite[Lemma~8.7]{BST:19}}, the shift invariant set $D$ is not required to be closed in Lemma~\ref{lem:sadic87}. Nevertheless, the lemma holds by the same proof as {\cite[Lemma~8.7]{BST:19}}.

In the statement of the next result, recall that $B_C$ is defined in \eqref{eq_BC} and denotes the set of sequences in $\cS_d^{\NN}$ with $C$-balanced language. 

\begin{lemma} \label{l:nuBC}
Under the assumptions of Theorem~\ref{theo:main}, we have $\lim_{C\to\infty} \nu(D \cap B_C) = 1$, in particular $D \cap B_C$ is $\nu$-measurable for all $C > 0$. 
\end{lemma}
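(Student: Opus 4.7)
The plan is to establish two things: (i) $D\cap B_C$ is Borel (hence $\nu$-measurable) for every $C>0$, and (ii) $\nu\bigl(\bigcup_{C>0}(D\cap B_C)\bigr)=1$. Since the family $\{D\cap B_C\}_C$ is increasing in~$C$, continuity of $\nu$ from below then yields $\lim_{C\to\infty}\nu(D\cap B_C)=1$.

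For (i), observe that $L_{\bsigma}=\bigcup_{n\in\NN}L_{(\sigma_0,\dots,\sigma_{n-1})}$, where $L_{(\sigma_0,\dots,\sigma_{n-1})}=\{w\in\cA^*:w\text{ is a factor of }\sigma_{[0,n)}(i),\,i\in\cA\}$ is a finite set determined by the cylinder $[\sigma_0,\dots,\sigma_{n-1}]$. Membership in $B_C$ is thus a countable conjunction, over $n$, over the finitely many pairs of equal-length factors in $L_{(\sigma_0,\dots,\sigma_{n-1})}$, and over letters $i\in\cA$, of elementary inequalities depending only on finitely many coordinates of~$\bsigma$; each such event is a finite union of cylinders and hence Borel. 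Therefore $B_C$, and so $D\cap B_C$, is Borel in~$\cS_d^\NN$.

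For (ii), I~would show that the balance function
\[
\beta(\bsigma)=\sup\bigl\{\big||w|_i-|w'|_i\big|\,:\,w,w'\in L_\bsigma,\ |w|=|w'|,\ i\in\cA\bigr\}
\]
is $\nu$-almost surely finite. Since $\bl(w)-\bl(w')\in\bone^\perp$ whenever $|w|=|w'|$, it suffices to show that $\sup_{w\in L_\bsigma}\|\pi'_{\bu}\,\bl(w)\|<\infty$ $\nu$-a.s., where $\bu=\bu(\bsigma)$ is the generalized right eigenvector of~$\bsigma$ (whose existence and positivity follow from $\theta_1(Z)>0$ and ergodicity of~$\nu$, which together force $\nu$-typical~$\bsigma$ to be primitive and recurrent). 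I~would control this supremum by combining the Oseledets multiplicative ergodic theorem with a standard $S$-adic telescoping. The Pisot condition yields, at $\nu$-a.e.\ $\bsigma$, an Oseledets splitting $\RR^d=E^u(\bsigma)\oplus E^s(\bsigma)$ with $E^u(\bsigma)=\RR\,\bu(\bsigma)=\ker\pi'_\bu$ and $E^s(\bsigma)$ a contracting hyperplane along which $M_{\sigma_{[0,k)}}$ decays at rate bounded by $C(\bsigma)\,e^{\delta k}$ for some deterministic $\delta<0$ and a measurable constant $C(\bsigma)$. Any factor $w\preceq\sigma_{[0,n)}(i)$ decomposes via iterated desubstitution as
\[
\bl(w)=\sum_{k=0}^{n-1}M_{\sigma_{[0,k)}}\,\bv_k+M_{\sigma_{[0,n)}}\,\bl(w^{(n)}),
\]
with each $\bv_k$ the abelianization of a prefix/suffix pair of letter images of~$\sigma_k$, so that $\|\bv_k\|\le 2\max_j|\sigma_k(j)|$, and $|w^{(n)}|\le 1$. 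Splitting each $\bv_k$ along the Oseledets decomposition at time~$k$ and applying $\pi'_\bu\circ M_{\sigma_{[0,k)}}$ kills the $E^u$-component (because $M_{\sigma_{[0,k)}}\,E^u(\Sigma^k\bsigma)=\RR\bu(\bsigma)=\ker\pi'_\bu$), while the $E^s$-component contributes at most $C(\bsigma)\,e^{\delta k}\|\bv_k\|$. Birkhoff's theorem applied to the $\nu$-integrable function $\log(2\max_j|\sigma_0(j)|)$ forces $\log\|\bv_k\|$ to grow subexponentially, so the series converges absolutely, and $\sup_w\|\pi'_\bu\,\bl(w)\|<\infty$ $\nu$-a.s.; this gives~(ii).

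The main obstacle is the bookkeeping in the Oseledets step: the constant $C(\bsigma)$ and the angle between $E^u(\Sigma^k\bsigma)$ and $E^s(\Sigma^k\bsigma)$ are measurable in~$\bsigma$ but not uniform, so upgrading the Oseledets contraction to the almost sure absolute summability statement above requires delicate log-integrability bookkeeping (which is available since $Z$ is log-integrable by assumption, being part of the Pisot hypothesis). These technicalities are of the same flavor as those carried out in \cite[Section~5]{BST:19}, on which one can model the detailed argument.
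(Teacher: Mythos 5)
Your measurability argument is fine and is essentially the paper's: one writes $B_C$ as a countable intersection over $n$ of (countable, not finite -- $\cS_d$ is infinite, e.g.\ for Jacobi--Perron) unions of cylinders determined by the condition that the finite language $L^{(n)}_{\bsigma}$ of the length-$n$ prefix is $C$-balanced, and uses the nestedness $L^{(0)}_{\bsigma}\subseteq L^{(1)}_{\bsigma}\subseteq\cdots$ to see that this intersection is exactly $D\cap B_C$. The divergence is in the full-measure statement $\nu\big(\bigcup_C (D\cap B_C)\big)=1$: the paper does not reprove this at all, it quotes it directly from \cite[Theorem~6.4]{Berthe-Delecroix}, whereas you attempt a self-contained Oseledets proof.

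In that attempt there is a genuine gap, and it sits precisely at the step you delegate to ``bookkeeping''. The cocycle in the hypothesis is $Z^{(k)}(\bsigma)=\tr{\!}M_{\sigma_{[0,k)}}$, a one-sided cocycle; Oseledets gives for it only a measurable \emph{stable filtration} (a hyperplane, which one can identify with $\bu^\perp$, on which $\tr{\!}M_{\sigma_{[0,k)}}$ decays), not a canonical splitting $E^u\oplus E^s$, and in particular it does \emph{not} give a measurable family of hyperplanes $E^s(\btau)$ with $\|M_{\sigma_{[0,k)}}\bw\|\le C(\bsigma)e^{\delta k}\|\bw\|$ for all $\bw\in E^s(\Sigma^k\bsigma)$: the products $M_{\sigma_0}\cdots M_{\sigma_{k-1}}$ are read in reverse order, so the subspaces they contract rotate with $k$ (only the image direction stabilizes, to $\RR\bu$), and even on the natural extension the Oseledets constants along the orbit $(\Sigma^k\bsigma)_k$ are merely tempered, not bounded by a single $C(\bsigma)$ as your summation requires. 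What the telescoping really needs is the exponential decay of $\|\pi'_{\bu}M_{\sigma_{[0,k)}}\|$, i.e.\ strong convergence; deducing this from $\theta_1>0>\theta_2$ is exactly the nontrivial content of \cite[Theorem~6.4]{Berthe-Delecroix} (via second exterior powers/singular values and positivity, cf.\ also \cite[equation~(4.21)]{Lagarias:93}), so your sketch in effect presupposes the theorem it is meant to replace. The honest repair is either to carry out that exterior-power argument in full (handling the unbounded substitution lengths by Birkhoff, as you indicate, and deriving primitivity a.e.\ from the periodic Pisot sequence with positive range rather than from $\theta_1>0$ alone), or simply to cite \cite[Theorem~6.4]{Berthe-Delecroix} as the paper does and reduce the lemma to the measurability statement you have already proved.
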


\begin{proof}
We first show that a.e.\ $\bsigma\in D$ is primitive. By assumption, $D$~contains a periodic Pisot sequence with positive range, i.e., there is a sequence $\btau = (\tau_n) \in D$ with the following properties:
\begin{itemize} 
\item[(a)] 
There is $j\ge 1$ such that $\Sigma^j \btau = \btau$ and $\tau_{[0,j)}$ is a unimodular Pisot substitution;
\item[(b)] 
$\inf_{n\in\NN} \nu(\Sigma^n[\tau_0,\dots,\tau_{n-1}]) > 0$. 
\end{itemize}
Since $\tau_{[0,j)}$ is a unimodular Pisot substitution by~(a), Remark~\ref{rem:PeriodicPrice} implies that it is primitive and, hence, there is $k \in \NN$ such that $\tau_{[0,kj)}$ has positive incidence matrix. 
Set $h = kj$. 
Because $\nu \circ \Sigma^h \ll \nu$, (b)~implies that 
\begin{equation}\label{eq:taupos}
\nu([\tau_0,\dots,\tau_{h-1}]) > 0. 
\end{equation}
Ergodicity of $\nu$ and the Poincar\'e Recurrence Theorem therefore yield that a.e.\ $\bsigma\in D$ contains $[\tau_0,\dots,\tau_{h-1}]$ infinitely often and, hence, a.e.\ $\bsigma\in D$ is primitive.

Since the Pisot condition holds, since $\tau_{[0,h)}$ has positive incidence matrix, and since \eqref{eq:taupos} holds, we gain from \cite[Theorem~6.4]{Berthe-Delecroix} that $\nu\big(\bigcup_{C\in\NN} (D \cap B_C)\big) = 1$.
Since $B_C \subseteq B_{C'}$ for all $C < C'$, it only remains to show that $D \cap B_C$ is $\nu$-measurable for all $C > 0$.
Let $C > 0$ be arbitrary but fixed and set
\[
B'_C = \bigcap_{n\in\NN} \bigcup_{(\sigma_0,\dots,\sigma_{n-1})\in \cS_d^n:\, \nu([\sigma_0,\dots,\sigma_{n-1}] \cap B_C) > 0} [\sigma_0,\dots,\sigma_{n-1}].
\]
(Recall that the cylinders $[\sigma_0,\dots,\sigma_{n-1}]$ are subsets of $D$ according to Definition~\ref{def:pr}.)
Then we clearly have $D \cap B_C \subseteq B'_C$.
On the other hand, if  $\bsigma \in B'_C$ is primitive, then $\bsigma \in D$ and the finite languages 
\[
L_{\bsigma}^{(n)} = \big\{  w\in \cA^* \,:\, \mbox{$w$ is a factor of $\sigma_{[0,n)}(i)$ for some $i \in \cA$}\big\}
\]
are $C$-balanced for all $n \in \NN$.
Since $L_{\bsigma}^{(0)} \subseteq L_{\bsigma}^{(1)} \subseteq \cdots$, also $L_{\bsigma} = \bigcup_{n\in\NN} L_{\bsigma}^{(n)}$ is $C$-balanced, i.e., $\bsigma \in B_C$. 
Hence, because a.e.\ directive sequence in~$D$ is primitive, we have $\nu((D \cap B_C) \triangle B'_C)=0$.
Since cylinders are measurable (they are open sets and $\nu$ is a Borel measure on~$D$) and countable unions and intersections of measurable sets are measurable, we obtain that $B'_C$ and, hence, also $D \cap B_C$ is $\nu$-measurable. 
\end{proof}

\begin{proposition}\label{prop:combcond}
Under the assumptions of Theorem~\ref{theo:main}, $\nu$-almost every
$\bsigma \in D$ satisfies  Property PRICE. 
\end{proposition}

\begin{proof}
Let $\btau=(\tau_n)\in D$ be a periodic Pisot point. We saw in the proof of Lemma~\ref{l:nuBC}  that there is $h\in \NN$ such that $\tau_{[0,h)}$ has positive incidence matrix and \eqref{eq:taupos} holds. Thus by Lemma~\ref{l:nuBC} there is $C \in \NN$ such that $\nu(\Sigma^{-h} (D \cap B_C)) = \nu(D \cap B_C) > 1 - \nu([\tau_0,\dots,\tau_{h-1}])$, and, hence, $\nu\big([\tau_0,\dots,\tau_{h-1}] \cap \Sigma^{-h} B_C\big) > 0$.

By ergodicity of~$\nu$ together with the Poincar\'e Recurrence Theorem, we have for almost all $\bsigma = (\sigma_n)_{n\in\NN} \in D$ some $\ell_0(\bsigma) \ge h$ such that $\Sigma^{\ell_0(\bsigma)-h} \bsigma \in [\tau_0,\dots,\tau_{h-1}] \cap \Sigma^{-h} B_C$, i.e., $(\sigma_0,\dots,\sigma_{\ell_0(\bsigma)-1})$ ends with $(\tau_0,\dots,\tau_{h-1})$ and $\Sigma^{\ell_0(\bsigma)} \bsigma \in B_C$. 
We will now extend $\ell_0(\bsigma)$ for almost all $\bsigma \in D$ to a sequence $(\ell_k(\bsigma))_{k\in\NN}$ such that
\begin{itemize}
\item
$(\sigma_0,\dots,\sigma_{\ell_{k+1}(\bsigma)-1})$ ends with $(\sigma_0,\dots,\sigma_{\ell_k(\bsigma)-1})$ (and, a fortiori, with $(\tau_0,\ldots,\tau_{h-1})$),
\item
$\Sigma^{\ell_{k+1}(\bsigma)} \bsigma \in B_C$,
\item
$\ell_{k+1}(\bsigma) \ge 2\ell_k(\bsigma)$,
\end{itemize}
for all $k \in \NN$.
To this end, assume that $\ell_0(\bsigma),\ldots,\ell_k(\bsigma)$ are already defined for almost all $\bsigma \in D$.
Consider the set of all $\bsigma$ having a given value $\ell_k=\ell_k(\bsigma)$ and a given prefix $(\sigma_0,\dots,\sigma_{\ell_k-1})$.
Assume that this set has positive measure, which implies that $\nu\big([\sigma_0,\dots,\sigma_{\ell_k-1}] \cap \Sigma^{-\ell_k} B_C\big) > 0$.
Then, for almost all $\bsigma$ in this set, we obtain (by the Poincar\'e Recurrence Theorem and ergodicity of~$\nu$) some $\ell_{k+1}(\bsigma)$ with the required properties. 
Applying this for all choices of $\ell_k$ and $(\sigma_0,\dots,\sigma_{\ell_k-1})$, we get some $\ell_{k+1}(\bsigma)$ for almost all $\bsigma \in D$.
Therefore, such a sequence $(\ell_k(\bsigma))_{k\in\NN}$ exists for almost all $\bsigma \in D$.

Setting $n_k (\bsigma) = \ell_{k+1}(\bsigma) - \ell_k(\bsigma)$, we obtain that conditions (P), (R) and~(C) of Property PRICE hold for almost all $\bsigma \in D$.
By \cite[Lemma~5.7]{BST:19}, we can replace $(n_k)$ and $(\ell_k)$ by subsequences such that condition~(E) holds. 
These subsequences also satisfy (P), (R), and~(C). 
From the Pisot condition and Lemma~\ref{lem:sadic87}, we obtain that almost all $\bsigma \in D$ are algebraically irreducible, i.e., also (I) holds a.e.\ and we are done. 
\end{proof}

With Proposition~\ref{prop:combcond} at our disposal, we can use a slight variation of \cite[Theorem~3.1]{BST:19} to show without much effort that under the conditions of Theorem~\ref{theo:main} the following is true: For almost all $\bsigma=(\sigma_n) \in D$, the dynamical system $(X_{\bsigma},\Sigma)$ has an $m$-to-$1$ factor which is a minimal translation on~$\TT^{d-1}$ for some $m \in\NN$. 
However, in order to prove Theorem~\ref{theo:main}, we have to show that $m=1$, i.e., that $(X_{\bsigma},\Sigma)$ is measurably conjugate to a minimal translation on~$\TT^{d-1}$, which is way more difficult. Indeed, to prove this, according to Proposition~\ref{p:gccBST} and \cite[Theorem~3.1]{BST:19}, one has to verify the (effective version of the) geometric coincidence condition for a.e.\ element of~$D$.\footnote{Recall that even in the substitutive case, it is not known if the geometric coincidence is always fulfilled.}
This would require tedious combinatorial verifications: 
By interpreting geometric coincidence geometrically (as indicated by its name),
this was done for some instances  in the case of three-letter alphabets in \cite{BBJS14} by using the dual $E_1^*(\sigma_{[0,n)})$ of the one-dimensional geometric realization of $\sigma_{[0,n)}$ for growing $n$. As recalled in the introduction,  this  requires  both combinatorial and geometric arguments relying on planar  topology, which restricts the scope of application of   such   methods to the case of three-letter alphabets.
 In the present paper, we use an ergodic argument to simplify this decisively, allowing  us to consider general alphabets,  and we show that it suffices to check the condition on the Pisot point in the statement of Theorem~\ref{theo:main}.

The idea behind this ergodic argument is as follows. 
The geometric coincidence condition \eqref{e:gcc3} is satisfied for a given directive sequence $\bsigma \in D$ if certain sets defined in terms of balls of arbitrarily large radius~$R$ are contained in sets that are defined by the combinatorics of~$\bsigma$. 
According to the effective version of the geometric coincidence condition \eqref{e:gcc4}, it is even sufficient to consider balls with a radius chosen in terms of certain properties of  balance of languages related to~$\bsigma$.
By the assumptions of Theorem~\ref{theo:main}, there exists a substitutive system $(X_\tau,\Sigma)$, with $\tau=\tau_{[0,j)}$ and $(\tau_0,\ldots, \tau_{j-1})^\infty \in D$,  which has purely discrete spectrum and, hence, by Lemma~\ref{rem:spectrumEquiv} satisfies the geometric coincidence condition \eqref{e:gcc3} for balls of arbitrarily large radii~$R$. 
After a technical preparation contained in Lemma~\ref{lem:tau}, in  Lemma~\ref{lem:sadic7.9crit} we show that this has the following consequence: Each $S$-adic dynamical system whose directive sequence $\bsigma = (\sigma_n)$ has Property  PRICE and contains a sufficiently long block $(\sigma_n,\ldots,\sigma_{n+\ell-1})$ with $\sigma_{[n,n+\ell)}=\tau^m$ (i.e., $m$ is sufficiently large) followed by some tail $\Sigma^{n+\ell}\bsigma \in B_C$, satisfies the effective version \eqref{e:gcc4} of geometric coincidence condition. 
Informally speaking, in $\bsigma$ we need a sufficiently long block consisting of the repetition of a given substitution that satisfies the coincidence condition \eqref{e:gcc3}, which is followed by a tail that  is ``balanced enough'', to guarantee the coincidence condition for the whole sequence~$\bsigma$.
Using the Poincar\'e Recurrence Theorem, we are able to show that almost all directive sequences in~$D$ contain such a block. 
This will finally imply Theorem~\ref{theo:main}.

\begin{lemma}\label{lem:tau}
Let $\tau$ be a unimodular Pisot substitution with geometric coincidence. 
Then for each $C>0$ there are $m = m_\tau(C) \in \NN$, $\bz \in \bone^\perp$, and $i \in \cA$ such that for each $\bt \in \RR^d_{\ge0} \setminus \{\mathbf{0}\}$ we have
\begin{equation} \label{e:gcct}
\begin{split}
\big\{\by \in \ZZ^d \,: & \ \Vert \pi_{\bt} M_\tau^{-m} \by - \bz\Vert_\infty \le C, \, 0 \le \langle\bone,\by \rangle < |\tau^m(j)|\big\} \\
\subset \{\bl(p) \,: & \ p\in \cA^*,\, p\, i \preceq \tau^m(j)\}
\end{split} \quad \mbox{for all}\ j \in \cA.
\end{equation}
\end{lemma}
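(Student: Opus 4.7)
Since $\tau$ is assumed to have geometric coincidence, Lemma~\ref{rem:spectrumEquiv} and the equivalence of \ref{i:gcc1} and \ref{i:gcc4} in Proposition~\ref{p:gccBST}, applied to the periodic directive sequence $\tau^\infty$, yield the standard effective coincidence: there exist $n_0\in\NN$, $\bz_0\in\bone^\perp$, the letter $i\in\cA$, and $C_0>0$ such that~\eqref{e:gcc4} holds at level $n_0$.  Here I use that for $\tau^\infty$ the generalized right eigenvector at level $n$ is $M_\tau^{-n}\bu=\lambda^{-n}\bu$, with $\bu$ the positive Perron eigenvector of $M_\tau$ and $\lambda$ the Pisot eigenvalue, so $\pi'_{\bu^{(n)}}=\pi'_\bu$.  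My plan is to bootstrap this $\bu$-projection coincidence to the $\bt$-projection coincidence of the lemma at a sufficiently high level~$m$.

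The key identity
$\pi'_\bt(\bv)-\pi'_\bu(\bv)=\langle\bone,\bv\rangle\,(\bu/\langle\bone,\bu\rangle-\bt/\langle\bone,\bt\rangle)$
reduces the comparison of the two projections to estimating $\langle\bone,M_\tau^{-m}\by\rangle$.  Writing the decomposition $\bone=c\bv_\tau+\bone_s$ along the positive left Perron eigenvector $\bv_\tau$ of $M_\tau$ (with $c>0$) and its complement in the transpose-stable subspace, the Pisot property gives $\langle\bone,M_\tau^m\bw\rangle=c\lambda^m\langle\bv_\tau,\bw\rangle+O(\|\bw\|)$.  For any $(\by,j)$ in the LHS of~\eqref{e:gcct}, I write $M_\tau^{-m}\by=\bz+\boldsymbol{\xi}+r\bt$ with $\boldsymbol{\xi}\in\bone^\perp$, $\|\boldsymbol{\xi}\|_\infty\le C$, and normalize by homogeneity $\langle\bone,\bt\rangle=1$.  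The height constraint $\langle\bone,\by\rangle<|\tau^m(j)|=O(\lambda^m)$ then forces, from the leading $\lambda^m$-terms, $|r\langle\bv_\tau,\bt\rangle|\le K(C,\tau)$, while the positivity of $\bv_\tau$ provides a uniform lower bound $\langle\bv_\tau,\bt\rangle\ge c_\tau>0$ on the nonnegative unit simplex.  Hence $|r|\le R$, where $R$ depends only on $C$ and~$\tau$.

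Combining the key identity with this bound, every such $(\by,j)$ automatically satisfies $\|\pi'_\bu(M_\tau^{-m}\by)-\bz\|_\infty\le C_1(C,\tau)$, reducing the $\bt$-coincidence at level $m$ with radius $C$ to a $\bu$-coincidence at the same level with enlarged radius $C_1$, same center $\bz$, and same letter $i$.  I would then choose $m$ a large multiple of $n_0$ and $\bz\in\mathrm{int}(\cR_{\tau^\infty}(i))$ so that the effective $\bu$-coincidence for $\tau^\infty$ at level $m$ holds with this enlarged radius $C_1$; the tool is the level-$m$ refinement of the tiling $\cC_{\tau^\infty}$ into subtiles of diameter $O(\rho^m)$ (with $\rho<1$, by the Pisot contraction), together with the fact that effective coincidence is available at every level by Proposition~\ref{p:gccBST}.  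The main obstacle is precisely this last step: because $C_1$ may exceed the inradius of the Rauzy tile when $C$ is large, no single exclusive ball inside a fixed Rauzy tile is enough.  The resolution is to exploit the exponential contraction of $M_\tau^{-m}$ on the stable subspace: together with the balancedness of $L_\tau$ it forces, for $m$ sufficiently large, that the only integer vectors $\by$ whose image $\pi'_\bu M_\tau^{-m}\by$ lies in a $C_1$-ball around $\bz$ and which respect the height restriction are exactly the desired prefix vectors, finishing the proof.
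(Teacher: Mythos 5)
Your opening reduction is sound in spirit: bounding the displacement $r=\langle\bone,M_\tau^{-m}\by\rangle$ by combining the height restriction with the positivity of the left Perron eigenvector $\bv$ of $M_\tau$, and then trading $\pi'_{\bt}$ for $\pi'_{\bu}$ at the cost of an enlarged radius, plays the same role as the transversality constants in the paper's proof (a side remark: citing Lemma~\ref{rem:spectrumEquiv} is backwards here, since geometric coincidence is the hypothesis, not pure discreteness). The proof is nevertheless incomplete, and the gap is exactly the step you flag: the $\pi'_{\bu}$-projected inclusion at level $m$ with the enlarged radius $C_1$ is asserted, not proved --- your closing ``resolution'' restates the goal. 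Worse, the route is quantitatively circular. Your bound $|r\langle\bv,\bt\rangle|\le K(C,\tau)$ silently uses $\langle\bv,\bz\rangle=O_\tau(1)$, i.e.\ a center of bounded size; but bounded centers cannot work in general: if $\|\bz\|_\infty\le C$ then $(\mathbf{0},j)$ lies in the left-hand side of \eqref{e:gcct} for every $j\in\cA$, so the inclusion would force all $\tau^m(j)$ to begin with the letter $i$, i.e.\ left properness of $\tau^m$, which is not assumed; more generally a bounded $\bz$ sends the relevant tiling-plane point $\pi'_{\bu}M_\tau^m\bz$ to $\mathbf{0}$, where the required exclusivity fails. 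Conversely, the centers for which a $\pi'_{\bu}$-coincidence with large radius can actually be verified --- those furnished by \eqref{e:gcc3}, or those obtained by pinning $\pi'_{\bu}M_\tau^m\bz$ at a fixed exclusive point of $\cC_{\tau^\infty}$ --- are exponentially large in $m$ (since $\bz\in\bone^\perp$ and $M_\tau^{-m}$ expands the stable directions, their $\bv$-component grows like the inverse contraction rate), so your $C_1$ then grows with $m$ and the comparison ``$\pi'_{\bu}M_\tau^m$-image of the $C_1$-cube has diameter $O(\rho^m)$'' collapses. Also note that Proposition~\ref{p:gccBST} only provides the effective condition at \emph{some} level with a radius tied to balancedness; it does not hand you an arbitrary radius at every level, which is what your scheme needs.

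The missing observation is that the hypothesis is already quantified over all radii: the geometric coincidence condition \eqref{e:gcc3} gives, for \emph{every} $R>0$ and all sufficiently large $m$, a center and a letter for the \emph{unprojected} inclusion, so no detour through Proposition~\ref{p:gccBST}, the tiling, or a radius upgrade is needed. This is what the paper does: take $R=c_1C+c_2$ in \eqref{e:gcc3} and convert the unprojected ball into the $\pi'_{\bt}$-condition by measuring everything relative to the hyperplane $(\tr{\!M}_\tau^m\bone)^\perp=M_\tau^{-m}\bone^\perp$ rather than $\bone^\perp$: since $\tr{\!M}_\tau^m\bone/\|\tr{\!M}_\tau^m\bone\|$ tends to the positive vector $\bv$, this hyperplane is uniformly transversal to every $\bt\in\RR^d_{\ge0}\setminus\{\mathbf{0}\}$ (yielding $\|\bx\|_\infty\le c_1\|\pi'_{\bt}\bx\|_\infty$ on it), and the slab $0\le\langle\bone,\by\rangle<|\tau^m(j)|$ has width $O(1)$ across it in the coordinates $M_\tau^{-m}\by$, uniformly in $m$ and $\bt$. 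Your difficulty comes precisely from taking $\bone^\perp$ (and $\pi'_{\bu}$) as reference: after applying $M_\tau^{-m}$ the slab is not thin relative to $\bone^\perp$ --- the quantity $\langle\bone,M_\tau^{-m}\by\rangle$ is unbounded unless the center is constrained --- whereas it is uniformly thin relative to $M_\tau^{-m}\bone^\perp$. Replacing your $\pi'_{\bu}$-comparison by this hyperplane comparison, and invoking \eqref{e:gcc3} directly with a large $R$, turns your first step into a complete proof, essentially the paper's.
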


\begin{remark}
If we look at the definition of the effective version of geometric coincidence in \eqref{e:gcc4}, the lemma states that the inclusion in this definition still holds if we replace $\pi_{\bu^{(n)}}$ by an arbitrary projection~$\pi_{\bt}$ with some nonnegative vector~$\bt$. 
Indeed, because the elements $M_\tau^{-m} \by$ that are projected are close to a hyperplane that is ``sufficiently orthogonal'' to~$\bt$ and~$\bone$, this projection does not change these vectors too much.
\end{remark}

\begin{proof}
Since $\tau$ satisfies the geometric coincidence condition, there exist, for each $R > 0$ and sufficiently large $m \in\NN$, some $i \in \cA$ and  $\bz' \in M_\tau^{-m} \bone^\perp = (\tr{\!M}_\tau^m\bone)^\perp$, such that 
\begin{equation} \label{e:gcc}
\begin{split}
 \{\by \in \ZZ^d \,: & \ \|M_\tau^{-m} \by -  \bz'\|_\infty \le R,  \,0\le \langle\bone, \by\rangle < |\tau^m(j)|\} \\
\subset \{\bl(p) \,: & \ p\in \cA^*,\, p\, i \preceq \tau^m(j)\}
\end{split} \quad \mbox{for all}\ j \in \cA.
\end{equation}

Since $\tr{\!M}_\tau^m\bone/\|\tr{\!M}_\tau^m\bone\|$ converges to a dominant eigenvector of~$\tr{\!M}_\tau$ which is positive, there exists a constant $c_1 > 0$ such that $\|\bx\|_\infty \le c_1 \|\pi_{\bt} \bx\|_\infty$ for all $\bt \in \RR^d_{\ge 0} \setminus \{\mathbf{0}\}$, $\bx \in (\tr{\!M}_\tau^m\bone)^\perp$, $m \in \NN$.
Let $\tilde{\pi}_{\bt,m}$ denote the projection along~$\bt$ onto $(\tr{\!M}_\tau^m\bone)^\perp$.
There is another constant $c_2 > 0$ such that $\|\bx - \tilde{\pi}_{\bt,m} \bx\|_\infty \le c_2$ for all $\bt \in \RR^d_{\ge 0} \setminus \{\mathbf{0}\}$, $\bx \in \RR^d$ with $0 \le \langle \tr{\!M}_\tau^m\bone, \bx\rangle < \max_{j\in\cA} \langle \tr{\!M}_\tau^m\bone, \be_j\rangle = \max_{j\in\cA} |\tau^m(j)|$, $m \in \NN$.
To see this, note that $\langle \tr{\!M}_\tau^m\bone, \bx\rangle < \max_{j\in\cA} \langle \tr{\!M}_\tau^m\bone, \be_j\rangle$ says that the orthogonal distance between $\bx$ and $(\tr{\!M}_\tau^m\bone)^\perp$ is smaller than the maximum of the orthogonal distances between $\be_j$ and $(\tr{\!M}_\tau^m\bone)^\perp$. This implies that the same is true for the corresponding distances ``along $\bt$'', i.e., $\|\bx - \tilde{\pi}_{\bt,m} \bx\|_\infty \le \max_{j\in\cA} \|\be_j-\tilde{\pi}_{\bt,m} \be_j\|_\infty$ and we can take $c_2=\max_{m'\in\NN}\max_{j\in\cA} \|\be_j-\tilde{\pi}_{\bt,m'} \be_j\|_\infty$, which is finite because $\tr{\!M}_\tau^m\bone/\|\tr{\!M}_\tau^m\bone\|$ converges to a positive dominant eigenvector of~$\tr{\!M}_\tau$.
Therefore, we have 
\[
\|M_\tau^{-m} \by - \bz'\|_\infty \le \|\tilde{\pi}_{\bt,m} M_\tau^{-m} \by - \bz'\|_\infty + c_2 \le c_1 \|\pi_{\bt} (M_\tau^{-m} \by -  \bz') \|_\infty + c_2
\]
for all $\by \in \ZZ^d$, $\bz' \in (\tr{\!M}_\tau^m\bone)^\perp$ with $0\le \langle\bone,\by \rangle < \max_{j\in\cA} |\tau^m(j)|$.
Choosing $m=m_\tau(C)$ such that \eqref{e:gcc} holds for $R = c_1 C+c_2$ and some $\bz' \in \bone^\perp$, $i \in \cA$, we obtain that \eqref{e:gcct} holds with $\bz = \pi_{\bt}\bz'$. 
\end{proof}

Let $\tau$ be a unimodular Pisot substitution that satisfies geometric coincidence. We now prove geometric coincidence for directive sequences $\bsigma = (\sigma_n)$ containing a long block $(\sigma_n,\ldots,\sigma_{n+\ell-1})$ satisfying $\sigma_{[n,n+\ell)} = \tau^m$ followed by a tail $\Sigma^{n+\ell}\bsigma \in B_C$. Indeed, this constellation will allow us to apply Lemma~\ref{lem:tau} in order to fulfill the effective version of the geometric coincidence condition for $\Sigma^{n+\ell}\bsigma$. Thus $\Sigma^{n+\ell}\bsigma$ gives rise to tilings which will lead to the desired conclusion.

\begin{lemma} \label{lem:sadic7.9crit}
Let $\tau$ be a unimodular Pisot substitution that satisfies geometric coincidence.
Let $\bsigma = (\sigma_n)$ be a sequence satisfying  Property PRICE with $C > 0$ chosen in a way that there are $\ell,n \in \NN$ such that, for $m = m_\tau(C)$ as in Lemma~\ref{lem:tau}, we have $\sigma_{[n,n+\ell)} = \tau^m$ and $\Sigma^{n+\ell} \bsigma \in B_C$. 
Then $\cC_{\bsigma}$ forms a tiling of~$\bone^\perp$.
\end{lemma}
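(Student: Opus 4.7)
The plan is to verify the effective version of the geometric coincidence condition \eqref{e:gcc4} for a suitably shifted sequence and then conclude by Proposition~\ref{p:gccBST}. Concretely, set $\bsigma' = \Sigma^n\bsigma$, so that $\sigma'_{[0,\ell)} = \sigma_{[n,n+\ell)} = \tau^m$ and $\Sigma^\ell \bsigma' = \Sigma^{n+\ell}\bsigma \in B_C$. By the remark preceding the lemma (citing \cite[Lemma~5.10]{BST:19}), $\bsigma'$ inherits PRICE from $\bsigma$; in particular $\bsigma'$ is primitive and recurrent, so its generalized right eigenvector $\bu'$ exists and is strictly positive, and the vector $\bu'^{(\ell)} = M_{\sigma'_{[0,\ell)}}^{-1}\bu' = M_\tau^{-m}\bu'$, being the generalized right eigenvector of $\Sigma^\ell\bsigma'$, lies in $\RR_{\ge0}^d\setminus\{\mathbf{0}\}$.

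Next, apply Lemma~\ref{lem:tau} to the Pisot substitution~$\tau$ with the given constant $C$ and with the admissible choice $\bt = \bu'^{(\ell)}$. This produces $\bz\in\bone^\perp$ and $i\in\cA$ such that
\[
\big\{(\by,j)\in\ZZ^d\times\cA : \|\pi'_{\bu'^{(\ell)}} M_\tau^{-m}\by - \bz\|_\infty \le C,\ 0\le\langle\bone,\by\rangle<|\tau^m(j)|\big\}
\subset \{(\bl(p),j) : p\,i\preceq\tau^m(j)\}.
\]
Because $\sigma'_{[0,\ell)} = \tau^m$, hence $M_{\sigma'_{[0,\ell)}} = M_\tau^m$ and $|\sigma'_{[0,\ell)}(j)|=|\tau^m(j)|$, this is literally the inclusion \eqref{e:gcc4} written for~$\bsigma'$ at position~$\ell$. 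The required balance hypothesis $L_{\Sigma^\ell\bsigma'}$ is $C$-balanced is precisely the assumption $\Sigma^{n+\ell}\bsigma \in B_C$. Thus $\bsigma'$ satisfies the effective version of the geometric coincidence condition.

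Since $\bsigma'$ satisfies PRICE, Proposition~\ref{p:gccBST} applied to $\bsigma'$ gives the implication (\ref{i:gcc4})$\Rightarrow$(\ref{i:gcc1}), so $\cC_{\bsigma'} = \cC_{\Sigma^n\bsigma}$ forms a tiling of $\bone^\perp$. Now applying Proposition~\ref{p:gccBST} to the original sequence~$\bsigma$ (which also satisfies PRICE), the equivalence (\ref{i:gcc2})$\Leftrightarrow$(\ref{i:gcc1}) transfers the tiling property from $\cC_{\Sigma^n\bsigma}$ to $\cC_{\bsigma}$, which is the desired conclusion.

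I do not anticipate any real obstacle: every step is a direct application of a result stated earlier in the paper. The only point requiring care is matching the projection direction appearing in the effective geometric coincidence condition for $\bsigma'$ at step~$\ell$ with the vector $\bt$ fed into Lemma~\ref{lem:tau}; both coincide with $\bu'^{(\ell)} = M_\tau^{-m}\bu'$, which is nonnegative by virtue of being the generalized right eigenvector of the primitive recurrent tail $\Sigma^\ell\bsigma'$, so Lemma~\ref{lem:tau} is applicable as stated.
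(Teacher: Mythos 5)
Your proof is correct and follows essentially the same route as the paper: the paper also applies Lemma~\ref{lem:tau} with $\bt = M_{\sigma_{[n,n+\ell)}}^{-1}M_{\sigma_{[0,n)}}^{-1}\bu$, which is exactly your $\bu'^{(\ell)} = M_\tau^{-m}\bu'$, to verify the effective coincidence condition~\eqref{e:gcc4} for $\Sigma^n\bsigma$, and then invokes Proposition~\ref{p:gccBST} (including PRICE for $\Sigma^n\bsigma$ via \cite[Lemma~5.10]{BST:19}) to pass from the tiling by $\cC_{\Sigma^n\bsigma}$ to the tiling by $\cC_{\bsigma}$. Your extra remark on the nonnegativity of $\bt$ is a point the paper leaves implicit, but it is the same argument.
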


\begin{proof}
Let $\bu$ be a generalized right eigenvector of~$\bsigma$.
Then $\bu^{(n)} = M_{\sigma_{[0,n)}}^{-1} \bu$ is a generalized right eigenvector of~$\Sigma^n \bsigma$. 
Since $\bsigma$ satisfies Property PRICE, $\Sigma^n \bsigma$ also satisfies Property PRICE by \cite[Lemma~5.10]{BST:19}.  
We want to prove that $\Sigma^n \bsigma$ satisfies \eqref{e:gcc4}. 
To this end, we apply Lemma~\ref{lem:tau} to~$\tau$ and $\bt = \bu^{(n+\ell)}$. 
Since $\sigma_{[n,n+\ell)} = \tau^m$, this yields that
\[
\begin{split}
& \{\by \in \ZZ^d \,:\, \| \pi_{\bu^{(n+\ell)}} M_{\sigma_{[n,n+\ell)}}^{-1}\by - \bz \|_\infty  \le  C,  \,0\le \langle\bone,\by \rangle < |\sigma_{[n,n+\ell)}(j)|\} \\
& = \{\by \in \ZZ^d \,:\,  \| \pi_{\bt} M_\tau^{-m}\by - \bz  \|_\infty  \le  C,  \,0\le \langle\bone,\by \rangle < |\tau^m(j)|\} \\
& \subset \{\bl(p) \,:\, p\in \cA^*,\, p\, i \preceq \sigma_{[n,n+\ell)}(j)\}
\end{split} \quad \mbox{for all}\ j \in \cA.
\]
Thus all conditions of Proposition~\ref{p:gccBST}~(\ref{i:gcc4}), in particular \eqref{e:gcc4}, are satisfied by $\Sigma^n \bsigma$, hence, by Proposition~\ref{p:gccBST} each of the two collections $\cC_{\Sigma^n\bsigma}$ and~$\cC_{\bsigma}$ forms a tiling of~$\bone^\bot$.
\end{proof}

We are now in a position to prove Theorem~\ref{theo:main}. 
Indeed, we use the Poincar\'e Recurrence Theorem together with the ergodicity of $\nu$  in order to show that under the conditions of Theorem~\ref{theo:main}, Lemma~\ref{lem:sadic7.9crit} can be applied to almost all directive sequences $\bsigma \in D$.

\begin{proof}[Conclusion of the proof of Theorem~\ref{theo:main}]
According to the assumptions of Theorem~\ref{theo:main}, there is a sequence $(\tau_n) \in D$ with period~$k$ and positive range such that $\tau = \tau_{[0,k)}$ is a Pisot substitution and the substitutive dynamical system $(X_\tau,\Sigma)$ has purely discrete spectrum. Lemma~\ref{rem:spectrumEquiv} implies that $\tau$ satisfies the geometric coincidence condition. 
By Lemma~\ref{l:nuBC} and the positive range of $(\tau_n)$, there is $C \in \NN$ such that 
\[
\nu(D \cap B_C) > 1 - \inf_{n\in\NN} \nu(\Sigma^n [\tau_0,\dots,\tau_{n-1}]).
\]
This yields that $\nu(\Sigma^{n}[\tau_0,\dots,\tau_{n-1}] \cap B_C) > 0$ and, since $\nu \circ \Sigma^n \ll \nu$, $\nu([\tau_0,\dots,\tau_{n-1}]  \cap \Sigma^{-n}B_C) > 0$  for all $n \in \NN$.
Choose $m = m_\tau(C)$ as in Lemma~\ref{lem:tau}.  
By the Poincar\'e Recurrence Theorem and the ergodicity of~$\nu$,  for almost all sequences $\bsigma \in D$, there exists~$n$ such that $\Sigma^n \bsigma \in [\tau_0,\dots,\tau_{km-1}] \cap \Sigma^{-km} B_C$, which is equivalent to the conditions $\sigma_{[n,n+\ell)} = \tau^m$ and $\Sigma^{n+\ell} \bsigma \in B_C$ in the formulation of Lemma~\ref{lem:sadic7.9crit}. 
Thus, since Property PRICE holds for a.e.\ $\bsigma \in D$ by Proposition~\ref{prop:combcond}, Lemma~\ref{lem:sadic7.9crit} yields geometric coincidence for almost all $\bsigma \in D$. 
This implies that $\cC_{\bsigma}$ forms a tiling of~$\bone^\perp$. 
We may thus apply \cite[Proposition~8.5]{BST:19} to conclude that $(X_{\bsigma}, \Sigma, \mu)$ is conjugate to the translation by $\pi_{\bu}\be_i = \be_i-\bu$ on $\bone^\perp/\ZZ^d$ for all $i \in \{1,\ldots,d\}$, where $\bu$ is the generalized right eigenvector of~$\bsigma$ normalized by $\|\bu\|_1=1$.
Taking $i = d$ and omitting the $d$-th coordinate, we obtain that $(X_{\bsigma}, \Sigma, \mu)$ is conjugate to the translation by $-\pi'(\bu)=-\bu'$ on~$\TT^{d-1}$, thus also to the translation by~$\bu'$. 
In particular, $(X_{\bsigma}, \Sigma, \mu)$ has purely discrete measure-theoretic spectrum. It remains to prove that the shift $(X_{\bsigma},\Sigma)$ is a natural coding of~$R_{\bu'}$  w.r.t.\ the natural partition $\{-\cR'_{\bsigma}(1), \ldots, -\cR'_{\bsigma}(d)\}$. The required topological properties of the atoms of the natural partition are established in \cite[Theorem~3.1]{BST:19}. 
We then consider the action of the domain exchange from \cite[Proposition~8.4]{BST:19} on the pieces of the Rauzy fractal which gives $\cR_{\bsigma}(i)  + \be_i-\bu  \subset  \cR_{\bsigma}$, for $1 \le i \le d$.
This yields, after applying~$\pi'$, that $-\cR'_{\bsigma}(i)  -\be_i +\bu'  \subset -\cR'_{\bsigma}$ for $1 \le i < d$ and $-\cR'_{\bsigma}(d) +\bu'  \subset -\cR'_{\bsigma}$. Lastly, the fact that the intersection of cylinders from Definition~\ref{def:NC} consists always of a single point holds by \cite[Lemma~8.3]{BST:19}.
\end{proof}

\subsection{Proof of Theorem~\ref{theo:main2}} \label{sec:proof-theor-refth}
To prove Theorem~\ref{theo:main2}, we need to get rid of the condition on the existence of a periodic Pisot sequence with purely discrete spectrum present in Theorem~\ref{theo:main}. 
In other words, under the conditions of Theorem~\ref{theo:main2}, we have to provide an ``accelerated'' substitution with purely discrete spectrum (i.e., satisfying the geometric coincidence condition by Lemma~\ref{rem:spectrumEquiv}). 
This is the objective of Proposition~\ref{p:sigmatilde}, which, for any given unimodular Pisot matrix~$M$, provides a substitution with incidence matrix~$M^n$ (for some $n\ge 1$) having purely discrete spectrum. 

We start with two technical lemmas. 
Lemma~\ref{lem:balanced} recalls the classical connection between Pisot substitutions and balance; see Remark~\ref{rem:PeriodicPrice}. 
Moreover, Lemma~\ref{lem:word} recalls that  for any given integer vector~$\bx$ with nonnegative entries, there exists a word~$w$ with uniformly bounded balance (w.r.t.\ the direction of~$\bx$) whose abelianization satisfies~$\bl(w) = \bx$.

\begin{lemma}\label{lem:balanced}
Let $M$ be a unimodular Pisot matrix with dominant right eigenvector~$\bu$. 
There exists a constant $C > 0$ such that each substitution~$\sigma$ satisfying $M_\sigma = M^k$ for some $k \in \NN$ and
\begin{equation}\label{eq:prefixsigmabound2}
\max_{p\in\cA^* \,:\, p \preceq \sigma(i),\, i\in\cA} \|\pi_{\bu} \bl(p)\|_\infty < 2
\end{equation}
has $C$-balanced language~$L_\sigma$. 
\end{lemma}

\begin{proof}
Let $\sigma$ be a substitution satisfying the conditions indicated in the statement of the lemma. 
Let $n \in \NN$ be arbitrary but fixed and choose a prefix~$p$ of $\sigma^n(i)$ for some $i \in \cA$. 
Then we have $p = \sigma^{n-1}(p_{n-1}) \cdots \sigma(p_1) p_0$ for some prefixes $p_j$ of $\sigma(i_j)$, $i_j \in \cA$, with $\sigma(i_j) \in p_j i_{j-1} \cA^*$; thus 
\[
\bl(p) = M^{k(n-1)} \bl(p_{n-1}) + \cdots + M^k \bl(p_1) + \bl(p_0).
\]
Let $\bv$ be a dominant left eigenvector of~$M$, $\varrho < 1$ the maximal absolute value of the nondominant eigenvalues of~$M$ and $\tilde{\pi}_{\bu}$ the projection along~$\bu$ on~$\bv^\perp$. 
Then we have a constant $c_1 > 0$ such that $\|M^\ell \bx\|_\infty \le c_1 \varrho^\ell \|\bx\|_\infty$ for all $\ell \in \NN$, $\bx \in \bv^\perp$.
Thus we have $\|\tilde{\pi}_{\bu} M^\ell\bx\|_\infty = \|M^\ell \tilde{\pi}_{\bu} \bx\|_\infty \le c_1 \varrho^\ell \|\tilde{\pi}_{\bu} \bx\|_\infty$ for all $\bx \in \RR^d$, hence 
\begin{equation}\label{eq:prefixsigmaboundA}
\|\tilde{\pi}_{\bu} \bl(p)\|_\infty < \frac{c_1}{1-\varrho^k} \max_{q\in\cA^*:\, q \preceq \sigma(i),\, i\in\cA} \|\tilde{\pi}_{\bu} \bl(q)\|_\infty.
\end{equation}
There is a constant $c_2 > 0$ such that $\|\pi_{\bu} \bx\|_\infty \le c_2 \|\bx\|_\infty$ for all $\bx \in \bv^\perp$ and $\|\tilde{\pi}_{\bu} \bx\|_\infty \le c_2 \|\bx\|_\infty$ for all $\bx \in \bone^\perp$.
Thus \eqref{eq:prefixsigmaboundA} and \eqref{eq:prefixsigmabound2} yield
\[
\begin{split}
\|\pi_{\bu} \bl(p)\|_\infty &= \|\pi_{\bu} \tilde{\pi}_{\bu} \bl(p)\|_\infty \le c_2 \|\tilde{\pi}_{\bu} \bl(p)\|_\infty
< \frac{c_1c_2}{1-\varrho^k} \max_{q\in\cA^*:\, q \preceq \sigma(i),\, i\in\cA} \|\tilde{\pi}_{\bu} \bl(q)\|_\infty \\
&= \frac{c_1c_2}{1-\varrho^k} \max_{q\in\cA^*:\, q \preceq \sigma(i),\, i\in\cA} \|\tilde{\pi}_{\bu}\pi_{\bu} \bl(q)\|_\infty 
\le  \frac{c_1c_2^2}{1-\varrho^k} \max_{q\in\cA^*:\, q \preceq \sigma(i),\, i\in\cA} \|\pi_{\bu} \bl(q)\|_\infty 
< \frac{2c_1c_2^2}{1-\varrho^k}.
\end{split}
\]
If $v\in L_\sigma$, then $v$ is a factor of $\sigma^n(i)$ for some $n\in \NN$, $i\in \cA$. Thus there are two prefixes $p_1,p_2$ of $\sigma^n(i)$ such that $p_1v=p_2$ and, hence,
$\|\pi_{\bu} \bl(v)\|_\infty \le 
\|\pi_{\bu} \bl(p_1)\|_\infty+\|\pi_{\bu} \bl(p_2)\|_\infty < \frac{4c_1c_2^2}{1-\varrho^k}$. Moreover, for two factors $v_1,v_2$ with $|v_1| = |v_2|$, we have
\[
\| \bl(v_1)-\bl(v_2) \|_\infty = \|\pi_{\bu} \bl(v_1)-\pi_{\bu} \bl(v_2)\|_\infty \le \|\pi_{\bu} \bl(v_1)\|_\infty + \|\pi_{\bu} \bl(v_2)\|_\infty \le \frac{8c_1c_2^2}{1-\varrho^k},
\]
thus $L_\sigma$ is $C$-balanced with $C = \frac{8c_1c_2^2}{1-\varrho^k}$. 
\end{proof}

\begin{lemma} \label{lem:word}
Let $\bx \in \NN^d$. 
Then there exists a word $w \in \cA^*$ such that $\bl(w) = \bx$ and  $\|\pi_{\bx} \bl(p)\|_\infty \le 1-\frac{1}{2d-2}$ for $p\preceq w$. 
Moreover, $w$ starts with the letter corresponding to the largest coordinate of~$\bx$.
\end{lemma}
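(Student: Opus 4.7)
The plan is to construct $w$ by a greedy algorithm that tracks the projected deviation $\pi'_{\bx}\bl(p)$. Let $N = |\bx|_1$ and let $j_1 \in \cA$ be an index at which $\bx$ attains its maximum value. I set $w_1 = j_1$ and, inductively, given a prefix $p = w_1 \cdots w_k$ with $\bl(p) = \by$, I choose $w_{k+1} = i$ from the set of available letters $\{i \in \cA : y_i < x_i\}$ so as to minimize $\|\pi'_{\bx}(\by + \be_i)\|$, breaking ties by some fixed rule. This candidate set is nonempty for all $k < N$ since $\sum_{i\in\cA} y_i = k < N = \sum_{i\in\cA} x_i$, so the process produces a word $w$ of length $N$ with $\bl(w) = \bx$. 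That this greedy rule selects $j_1$ at the first step is consistent with the requirement on the initial letter: since $\pi'_{\bx}\be_i = \be_i - \bx/N$, the norm $\|\be_i - \bx/N\|$ is minimized at the index $i$ for which $\be_i$ is most aligned with the ray $\RR_+\bx$, namely $i = j_1$.

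The bound $\|\pi'_{\bx}\bl(p)\| \le 1 - \frac{1}{2d-2}$ is then established by induction on $|p|$. The base case $|p|=0$ is trivial because $\pi'_{\bx}(\mathbf{0}) = \mathbf{0}$, and for the inductive step I would exploit the identity $\pi'_{\bx}(\by + \be_i) = \pi'_{\bx}\by + (\be_i - \bx/N)$ together with the balance relation $\sum_{i\in\cA} x_i(\be_i - \bx/N) = \mathbf{0}$ in $\bone^\perp$. The central claim is that whenever $\|\pi'_{\bx}\by\| \le 1 - \frac{1}{2d-2}$, there is at least one admissible $i$ (with $y_i < x_i$) for which $\|\pi'_{\bx}\by + \be_i - \bx/N\| \le 1 - \frac{1}{2d-2}$ as well. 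Geometrically, the vectors $\{\be_i - \bx/N\}_{i\in\cA}$ form a balanced set of $d$ increments in the $(d-1)$-dimensional space $\bone^\perp$ with positive coefficients $x_i$ summing to zero, and together with the availability constraints they can be used to keep the deviation inside the prescribed ball.

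The main obstacle will be the sharp constant $\frac{1}{2d-2}$: a naive estimate only yields a bound close to~$1$, because each individual increment $\be_i - \bx/N$ can itself have norm close to~$1$. The improved constant comes from a careful quantitative analysis exploiting (i) the cancellation $\sum_{i\in\cA} x_i(\be_i - \bx/N) = \mathbf{0}$, which forces the long-run drift of the greedy process to vanish, and (ii) a pigeonhole-type argument showing that when $\|\pi'_{\bx}\by\|$ approaches the bound, at least one admissible move $\be_i - \bx/N$ must point sufficiently ``inward''. The factor $2(d-1)$ in the denominator reflects the dimension of $\bone^\perp$, and I expect the technical heart of the proof to be a case analysis according to which coordinates $y_i$ have already attained the value $x_i$: these are precisely the forbidden directions, and one must verify that the remaining admissible increments still cover $\bone^\perp$ finely enough to realize the claimed bound.
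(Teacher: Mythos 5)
The paper does not prove this lemma at all: it is exactly the ``chairman assignment'' theorem of Meijer and Tijdeman \cite{Meijer:73,Tijdeman:80}, and the proof in the paper is a one-line citation. Both the sharp constant $1-\frac{1}{2d-2}$ and the fact that the construction starts with the letter of largest weight are taken from Tijdeman's construction (the latter is invoked again in the proof of Proposition~\ref{p:sigmatilde}). So the benchmark for your proposal is whether it actually establishes Tijdeman's bound, not whether it matches the paper's text.

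As written, your argument has a genuine gap: the entire content of the lemma is concentrated in your ``central claim'' that whenever $\|\pi'_{\bx}\bl(p)\|\le 1-\frac{1}{2d-2}$ there is an admissible letter $i$ (with $|p|_i<x_i$) keeping the deviation in the same ball, and you do not prove it --- you only announce that you ``would exploit'' the balance relation and ``expect'' a case analysis to work. Moreover, the norm-ball condition alone is not an obviously self-sustaining invariant: writing $C=1-\frac{1}{2d-2}$ and $\bz=\pi'_{\bx}\bl(p)$, a step on letter $i$ stays in the ball iff $z_i\le C-1+x_i/N$ and $z_k\ge x_k/N-C$ for all $k\ne i$; if two coordinates ever satisfy $z_k<x_k/N-C$ simultaneously, \emph{no} letter is safe, and for $d\ge 3$ the bound $\|\bz\|_\infty\le C$ together with $\sum_k z_k=0$ does not exclude such configurations a priori. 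Ruling them out (and handling letters that are already saturated, $|p|_i=x_i$) requires a finer invariant or a global counting argument --- this is precisely the delicate part of Tijdeman's proof, and it is also why it is not clear that your norm-greedy rule attains the sharp constant rather than a bound close to $1$. (A weaker constant would in fact suffice for the application in Proposition~\ref{p:sigmatilde}, but not for the lemma as stated.) The observation about the first letter is fine for the sup or Euclidean norm, but by itself it does not repair the missing inductive step; to complete the proof you would either have to carry out the quantitative analysis you defer, or simply cite Tijdeman as the paper does.
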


\begin{proof}
This is proved in \cite{Meijer:73,Tijdeman:80}.
\end{proof}
 
The construction of the desired substitution is contained in the following proposition.
 
\begin{proposition}\label{p:sigmatilde}
Let $M$ be a nonnegative unimodular Pisot matrix. 
Then there exists a substitution~$\sigma$ with incidence matrix~$M_\sigma$ satisfying  $M_\sigma= M^n$ for some $n \in \NN$ such that the geometric coincidence condition holds. 
Moreover, we can choose $\sigma$ in a way that $\sigma(i) \prec \sigma(j)$ or $\sigma(j) \prec \sigma(i)$ for all $i,j \in \cA$. 
\end{proposition}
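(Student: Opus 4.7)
The plan is to construct $\sigma$ explicitly as a concatenation of words obtained from Lemma~\ref{lem:word}, to verify that $L_\sigma$ is $C$-balanced via Lemma~\ref{lem:balanced}, and finally to check the effective geometric coincidence condition \eqref{e:gcc4} via Proposition~\ref{p:gccBST}(\ref{i:gcc4}) applied to the periodic sequence $\sigma^\infty$.

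Since $M$ is a nonnegative Pisot matrix with dominant eigenvalue $\lambda>1$ and positive right eigenvector~$\bu$, each column satisfies $M^n\be_i=\lambda^n c_i\bu+O(\varrho^n)$ for positive constants $c_i$ and $\varrho<1$ the spectral radius of $M$ restricted to its non-dominant eigenspace. For $n$ sufficiently large, we may permute the alphabet as $(i_1,\ldots,i_d)$ so that $M^n\be_{i_1}\le\cdots\le M^n\be_{i_d}$ componentwise. Set $\bx_j=M^n\be_{i_j}-M^n\be_{i_{j-1}}\in\NN^d$ (with $\be_{i_0}:=\mathbf{0}$) and apply Lemma~\ref{lem:word} to obtain words $w'_j$ with $\bl(w'_j)=\bx_j$ and $\|\pi'_{\bx_j}\bl(q)\|\le 1-\frac{1}{2d-2}$ for all prefixes $q$ of $w'_j$. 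Define $\sigma(i_j)=w'_1w'_2\cdots w'_j$; then $M_\sigma=M^n$ and by construction $\sigma(i_1)\preceq\cdots\preceq\sigma(i_d)$, which gives the required prefix chain property.

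To apply Lemma~\ref{lem:balanced}, I verify the hypothesis $\|\pi'_\bu\bl(p)\|<2$ for every prefix $p$ of each $\sigma(i_k)$. Decomposing $p=w'_1\cdots w'_{k-1}q$ with $q$ a prefix of $w'_k$, we have $\bl(p)=M^n\be_{i_{k-1}}+\bl(q)$. Since $M^n\be_{i_{k-1}}$ is close to the $\bu$-direction, $\|\pi'_\bu M^n\be_{i_{k-1}}\|=O(\varrho^n)$; likewise $\bx_k=M^n(\be_{i_k}-\be_{i_{k-1}})$ is close to the $\bu$-direction (its leading term in the Jordan decomposition of $M^n$ is $\lambda^n(c_{i_k}-c_{i_{k-1}})\bu$), so $\|\pi'_\bu\bx_k\|=O(\varrho^n)$. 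Writing $\bl(q)=\alpha\bx_k+\by$ with $\by\in\bone^\perp$, the bound $\|\pi'_{\bx_k}\bl(q)\|\le 1-\frac{1}{2d-2}$ gives $\|\by\|\le 1-\frac{1}{2d-2}$, and since $\alpha\in[0,1]$, we obtain $\|\pi'_\bu\bl(p)\|<2$ for $n$ large enough. Lemma~\ref{lem:balanced} then produces a constant $C$ (depending only on~$M$) such that $L_\sigma$ is $C$-balanced.

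The main obstacle is verifying the geometric coincidence condition via \eqref{e:gcc4}. The prefix chain property passes to~$\sigma^m$, so $\sigma^m(i_1)\preceq\cdots\preceq\sigma^m(i_d)$ for every $m$; in particular, for any prefix $p$ of $\sigma^m(j)$ with $|p|<|\sigma^m(i_1)|$, the next letter coincides with the $(|p|+1)$-st letter of $\sigma^m(i_d)$, independently of~$j$. Fix $m$ large, pick a prefix $p_0$ of $\sigma^m(i_1)$ whose next letter is some $i\in\cA$, and set $\bz=\pi'_{\bu^{(m)}}M_\sigma^{-m}\bl(p_0)$. Using unimodularity of~$M$, the map $\pi'_{\bu^{(m)}}M_\sigma^{-m}$ sends the sublattice $\bone^\perp\cap\ZZ^d$ to a sublattice of $\bone^\perp$ of covolume $\lambda^m$, so for $m$ large enough, at each fixed height $h$ the tube $\{\by\in\ZZ^d:\|\pi'_{\bu^{(m)}}M_\sigma^{-m}\by-\bz\|_\infty\le C\}$ contains at most one lattice point. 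A volume-counting argument then bounds, independently of~$m$, the total number of prefix-abelianizations $\bl(p_h)$ falling inside the tube: since the tube constrained to heights $[0,|\sigma^m(i_d)|)$ has volume $O(C^{d-1})$, this count is $O(C^{d-1})$. The technical crux is to choose $p_0$ (among the $\Theta(\lambda^m)$ candidates in $\sigma^m(i_1)$ with next letter~$i$) so that $p_0$ is the unique prefix-abelianization in its tube; combined with the $C$-balance of $L_\sigma$ (which forces any lattice point in the tube to lie on the broken line of $\sigma^m(i_d)$), this makes every $(\by,j)$ satisfying the hypotheses of \eqref{e:gcc4} equal to $(\bl(p_0),j)$ with $p_0\,i\preceq\sigma^m(j)$, yielding the geometric coincidence condition via Proposition~\ref{p:gccBST}.
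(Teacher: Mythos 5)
Your construction of $\sigma$ (concatenating the words of Lemma~\ref{lem:word} between consecutive columns $M^n\be_{i_j}$) and your verification of the hypothesis of Lemma~\ref{lem:balanced} are fine, and they do yield $M_\sigma=M^n$, the prefix-chain property, and $C$-balance of $L_\sigma$ with $C$ depending only on~$M$. The genuine gap is in the last paragraph, i.e., precisely at the verification of the geometric coincidence condition. Condition \eqref{e:gcc4} requires that \emph{every} lattice point $\by$ in the tube $\{\by:\|\pi'_{\bu}M_\sigma^{-m}\by-\bz\|_\infty\le C,\ 0\le\langle\bone,\by\rangle<|\sigma^m(j)|\}$ be of the form $\bl(p)$ with $p\,i\preceq\sigma^m(j)$ for one common letter~$i$. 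Your argument that ``the $C$-balance of $L_\sigma$ forces any lattice point in the tube to lie on the broken line'' is exactly backwards: balance says the broken line stays within distance $C$ of the line $\RR\bu$, not that every lattice point near that line is an abelianized prefix --- the possible presence of near-line lattice points off the broken line is the whole difficulty that coincidence conditions are designed to handle. Likewise, choosing $p_0$ to be ``the unique prefix-abelianization in its tube'' does not help: since $M$ is unimodular, the lattice points in the tube are exactly $M_\sigma^{m}\by'$ for the finitely many $\by'\in\ZZ^d$ with $\|\pi'_{\bu}\by'-\bz\|_\infty\le C$ in the relevant height range, and this set typically has several elements; uniqueness of $p_0$ among prefix points does not make the remaining lattice points disappear, nor does it address the requirement that all tube points share the same next letter~$i$. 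So the crucial inclusion is asserted, not proved, and for your $\sigma$ (with only the $d$ break points $M^n\be_{i_1},\dots,M^n\be_{i_d}$) it will in general fail for the obvious choices of $\bz$.

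The paper closes exactly this gap by building the coincidence into the construction. Instead of decomposing only between consecutive columns, it takes the \emph{full} finite set $P$ of lattice points $\by$ with $\|\pi'_{\bu}\by\|\le C$ in the relevant height range, orders $P=\{\by_0,\dots,\by_L\}$ by $\langle\bu,\cdot\rangle$, and applies Lemma~\ref{lem:word} to each difference $\bx_\ell=M^n(\by_{\ell+1}-\by_\ell)$, setting $\sigma(j)=w_0\cdots w_{L_j-1}$ with $\by_{L_j}=\be_j$. Then, with $m=1$ and $\bz=\mathbf{0}$, the lattice points in the tube are precisely the break points $\bl(w_0\cdots w_\ell)=M^n\by_{\ell+1}$, so they are all prefix abelianizations by construction; and the extra feature of Tijdeman's construction recorded in Lemma~\ref{lem:word} (each $w_\ell$ starts with the letter of the largest coordinate of $\bx_\ell$, which for $n$ large is the same letter $i_0$ for all $\ell$) supplies the common next letter. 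If you want to repair your proof, you must enlarge your decomposition in the same way (make every element of $M^nP$ a break point and force a common first letter), rather than try to recover \eqref{e:gcc4} a posteriori from balance and a counting argument.
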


\begin{proof}
Let $\bu$ be a dominant right eigenvector of~$M$.
We construct a substitution~$\sigma$ using the set
\[
P = \{\by \in \ZZ^d \,:\, \|\pi_{\bu} \by\|_\infty \le C,\, 0 \le \langle \bone, M^n \by \rangle \le \max_{i\in\cA} \langle \bone, M^n \be_i \rangle\ \mbox{for some}\ n \in\NN\},
\]
with $C$ as in Lemma~\ref{lem:balanced}.
Note that $P$ is a finite set since $\langle \bone, M^n \by \rangle = \langle \tr{\!M}^n \bone, \by \rangle$ and $\bu \in \RR_+^d$.
Write $P = \{\by_\ell:\, 0 \le \ell \le L\}$ such that $0 = \langle \bu, \by_0\rangle < \langle \bu, \by_1\rangle < \cdots < \langle \bu, \by_L\rangle$; this is possible since $\bu$ has rationally independent coordinates.
Then for $n \in \NN$ large enough we have 
\begin{equation}\label{eq:le13}
\|\pi_{\bu} M^n \by\|_\infty \le \frac13 \qquad\mbox{for all $\by \in P$} 
\end{equation}
and $M^n (\by_{\ell+1} - \by_\ell)  \in \NN^d$ for all $0 \le \ell < L$. 
Let words~$w_\ell$ be given by Lemma~\ref{lem:word} with $\bx=\bx_\ell = M^n (\by_{\ell+1} - \by_\ell)$ for $0 \le \ell < L$, and let $L_j$, $j \in \cA$, be such that $\by_{L_j} = \be_j$. (Note that $\be_j \in P$ since $C \ge 1$.) 
Define the substitution $\sigma$ by $\sigma(j) = w_0 w_1 \cdots w_{L_j-1}$ for all $j \in \cA$.
Note that $\sigma(i)$ is a prefix of $\sigma(j)$ if and only if $\langle \bu, \be_i\rangle < \langle \bu, \be_j\rangle$. 

To show that $L_\sigma$ is $C$-balanced, consider $p \preceq \sigma(j)$ for some $j \in \cA$. 
Then $p = w_{0}\cdots w_{\ell-1} p'$ for some $0 \le \ell < L$, $p' \preceq w_\ell$. (Here, $w_{0}\cdots w_{\ell-1}$ is the empty word for $\ell=0$.)
This yields
\[
\begin{split}
\|\pi_{\bu} \bl(p)\|_\infty & = \| \pi_{\bu}\by_{\ell} + \pi_{\bu} \bl(p') \|_\infty  \le \| \pi_{\bu}\by_{\ell} \|_\infty + \| \pi_{\bu} \bl(p') \|_\infty \\
& \le \| \pi_{\bu}\by_{\ell} \|_\infty + \| \pi_{\bu}M^n (\by_{\ell+1} - \by_{\ell}) \|_\infty + \| \pi_{M^n (\by_{\ell+1} - \by_{\ell})} \bl(p') \|_\infty < 2,
\end{split}
\]
where the last inequality follows from \eqref{eq:le13} and Lemma~\ref{lem:word}.
Therefore, Lemma~\ref{lem:balanced} gives that $L_\sigma$ is $C$-balanced.
It remains to show that the constant sequence $(\sigma)$ satisfies the effective version of the geometric coincidence condition \eqref{e:gcc4}. 

By the construction of~$\sigma$, we have $M_\sigma = M^n$ and
\begin{equation}\label{eq:unionGcc}
\begin{split}
& \{\by \in \ZZ^d \,:\, \|\pi_{\bu}  M_\sigma^{-1} \by\|_\infty \le C,\, 0 \le \langle \bone, \by \rangle < |\sigma(j)|\} \\
& = \bigcup_{j'\in \cA}\{\bl(w_0 \cdots w_\ell) \,:\, 0\le \ell < L_{j'}-1 \} \subset \bigcup_{i\in\cA} \{\bl(p) \,:\, p \in \cA^*,\, p\, i \preceq \sigma(j)\}
\end{split}
\end{equation}
for all $j\in\cA$. 
Let $i_0 \in \cA$ be chosen in a way that $\langle \bu, \be_{i_0}\rangle=\max_{j\in\cA}\langle \bu, \be_{j}\rangle$.
Then the $i_0$-th coordinate of $\bx_\ell$ is the largest one for each $0 \le \ell < L$ if $n$ is chosen large enough.
Since we defined the words $w_\ell$ by Lemma~\ref{lem:word}, this means that $w_\ell$ starts with~$i_0$ for each $0 \le \ell < L$ if $n$ is chosen large enough, and 
we can sharpen the inclusion in \eqref{eq:unionGcc} to 
\[
\bigcup_{j'\in \cA}\{\bl(w_0\cdots w_\ell) \,:\, 0\le \ell < L_{j'} -1\} \subset  \{\bl(p) \,:\, p \in \cA^*,\, p\, i_0 \preceq \sigma(j)\}
\]
for all $j\in\cA$. Together with \eqref{eq:unionGcc}, this yields
\begin{gather*}
\{\by \in \ZZ^d :\, \|\pi_{\bu}  M_\sigma^{-1} \by\|_\infty \le C,\, 0 \le \langle \bone, \by \rangle < |\sigma(j)|\} 
\subset  \{\bl(p) \,:\, p \in \cA^*,\, p\, i_0 \preceq \sigma(j)\}
\end{gather*}
for all $j\in\cA$.
Therefore, $\sigma$ satisfies the effective version of the geometric coincidence condition, and, hence, by Proposition~\ref{p:gccBST}, also the geometric coincidence condition.
\end{proof}

\begin{remark}
To prove the main statement of Proposition~\ref{p:sigmatilde}, we could also have used the condition from \cite[Corollary~2]{Barge:16} to check geometric coincidence.\footnote{In \cite{Barge:16} the author deals with tiling flows, however, as we saw in the proof of Lemma~\ref{rem:spectrumEquiv}, this makes no difference, see also~\cite[Theorem~3.1]{CS:03}.}
This condition requires that the last letter of $\sigma(i)$ is equal for all $i \in \cA$ and the first letter of $\sigma(i)$ is different from the first letter of $\sigma(j)$ if $i\ne j$. 
If $M$ is a unimodular Pisot matrix, then it is also primitive and thus there is $n\in\NN$ such that $M^n$ is a positive matrix. By this positivity, there is clearly a substitution $\sigma$ with incidence matrix $M^n$ having this property. Because our proof is elementary and much shorter than the proof of \cite[Corollary~2]{Barge:16}, we decided to give a direct proof.
\end{remark}

We can now finish the proof of Theorem~\ref{theo:main2}. 
The idea is to  provide a suitable substitutive realization in the same flavor as the substitutive realizations associated with multidimensional continued fraction algorithms from Section~\ref{sec:s-adic-shifts}. 
Analogously to compositions of substitutions, in the sequel we will use the notation $M_{[k,n)} = M_k \cdots M_{n-1}$ for products of matrices.

\begin{proof}[Proof of Theorem~\ref{theo:main2}]
Let $(\fD,\Sigma,Z,\nu)$ be as in the statement of Theorem~\ref{theo:main2}. 
Then for some $k>0$ there is a sequence $(\tilde{M}_n) \in \fD$ with period $k$ and positive range such that $\tilde{M}_{[0,k)}$ is a Pisot matrix. 
Since $\tilde{M}_{[0,k)}$ and $\tilde{M}_{[i,i+k)}=\tilde{M}_{[0,i)}^{-1}\tilde{M}_{[0,k)}\tilde{M}_{[0,i)}$ are similar matrices, $\tilde{M}_{[i,i+k)}$ is a Pisot matrix for each $0 \le i < k$. 
By Proposition~\ref{p:sigmatilde}, there is a substitution~$\tau_i$ with incidence matrix $M_{\tau_i} = \tilde{M}_{[i,i+k)}$ satisfying the geometric coincidence condition (replace $k$ by $km$ for some $m \in \NN$ if necessary). 
We choose $\tau_i$ in a way that $\tau_i = \tau_j$ if $\tilde{M}_{[i,i+k)}=\tilde{M}_{[j,j+k)}$, $0\le i,j<k$. 

Choose a map $s: \cM_d^k \to \cS_d$ with the properties that 
\begin{itemize}
\item
the incidence matrix of $s(M_0,\dots,M_{k-1})$ is $M_{[0,k)}$ for all $(M_0,\dots,M_{k-1}) \in \cM_d^k$,
\item
$s(M_0,\dots,M_{k-1}) = s(M'_0,\dots,M'_{k-1})$ if $M_{[0,k)} = M'_{[0,k)}$,
\item
$s(M_0,\dots,M_{k-1}) = \tau_i$ if $M_{[0,k)} =\tilde{M}_{[i,i+k)}$ for some $0 \le i < k$.
\end{itemize}
Then the map
\begin{equation}\label{eq:bpsidef}
\bpsi:\, \fD \to \cS_d^\NN, \quad (M_n)_{n\in\NN} \mapsto \big(s(M_{kn},\dots,M_{kn+k-1})\big)_{n\in\NN}
\end{equation}
is well defined, and, setting $D=\bpsi(\fD)$ we have the commutative diagram
\[
\begin{tikzcd}
\fD \arrow[r, "\Sigma^k"]\arrow[d,"\bpsi"] & \fD \arrow[d, "\bpsi"] \\
D \arrow[r, "\Sigma"]& D
\end{tikzcd}
\]
The acceleration $\Sigma^k$ of $\Sigma$ may no longer be ergodic with respect to~$\nu$. 
Thus the system $ (D,\Sigma,\nu')$ may be nonergodic, with $\nu' = \nu \circ \bpsi^{-1}$. 
However, we will now show that $(D,\Sigma,\nu')$ can be partitioned into ergodic systems\footnote{These systems  correspond to sets of directive sequences
that may not be closed in~$D$. This is why we chose not to confine ourselves to closed  sets of directive sequences.} that satisfy the conditions of Theorem~\ref{theo:main}. 
Since all cylinders in~$D$ are measurable, $\nu'$~is a Borel probability measure on~$D$. 
Suppose that $(D,\Sigma,\nu')$ is not ergodic. 
Then there exists a $\Sigma$-invariant (up to measure zero) subset $\tilde{D} \subseteq D$ with $0 < \nu'(\tilde{D}) < 1$.
Then $\bpsi^{-1}(\tilde{D}) \subset \fD$ is $\Sigma^k$-invariant, hence $\bigcup_{i=0}^{k-1} \Sigma^{-i} \bpsi^{-1}(\tilde{D})$ is $\Sigma$-invariant and, by ergodicity of~$\nu$, equal to $\fD$ up to measure zero.
Therefore, we have $\nu'(\tilde{D}) = \nu(\bpsi^{-1}(\tilde{D}))  \ge 1/k$.
Since $D \setminus \tilde{D}$ is also $\Sigma$-invariant, we also have $\nu'(\tilde{D}) \le 1-1/k$. 
We repeat the argument until we have a measurable partition $\{D_1,\dots,D_\ell\}$ of~$D$, with $1 \le \ell \le k$, such that $\big(D_j,\Sigma,\frac{\nu'|_{D_j}}{\nu(D_j)}\big)$ is ergodic for all $1 \le j \le \ell$. 

Let now $j$ be fixed. 
We need to prove that, for some $0 \le i < k$, the constant (and hence periodic) Pisot sequence $(\tau_i)_{n\in\NN}$ has positive range in~$D_j$.
For all $0 \le i < k$, we have 
\begin{align}
\nu'\big(\Sigma^n[\underbrace{\tau_i,\ldots,\tau_i}_{n \hbox{ \scriptsize times}}] \cap D_j\big) & \ge
\nu\big(\Sigma^{kn} \big[\tilde{M}_i,\ldots,\tilde{M}_{i+kn-1}\big] \cap \bpsi^{-1}(D_j)\big) 
\nonumber\\
&= \nu\big(\Sigma^{-i} \Sigma^{kn} \big[\tilde{M}_i,\ldots,\tilde{M}_{i+kn-1}\big] \cap \Sigma^{-i}\bpsi^{-1}(D_j)\big) 
\label{eq:MtildeTau3}\\
& \ge \nu\big(  \Sigma^{kn} \big[\tilde{M}_0,\ldots,\tilde{M}_{i+kn-1}\big]\cap \Sigma^{-i} \bpsi^{-1}(D_j)\big).
\nonumber
\end{align}
Since
\[
\Sigma^i \bigcap_{n\in\NN} \Sigma^{kn}\big[\tilde{M}_0,\ldots,\tilde{M}_{i+kn-1}\big] = \bigcap_{n\in\NN} \Sigma^{kn+i}\big[\tilde{M}_0,\ldots,\tilde{M}_{i+kn-1}\big]
\]
and 
\begin{equation*}
 \nu\bigg(\bigcap_{n\in\NN} \Sigma^{kn+i}\big[\tilde{M}_0,\ldots,\tilde{M}_{i+kn-1}\big]\bigg)  = \inf_{n\in\NN} \nu\big(\Sigma^{kn+i}[\tilde{M}_0,\ldots,\tilde{M}_{i+kn-1}]\big) > 0
\end{equation*}
by the positive range of $(\tilde{M}_n)$, $\nu\circ\Sigma^i \ll \nu$ gives that 
\[
\nu\bigg(\bigcap_{n\in\NN} \Sigma^{kn}\big[\tilde{M}_0,\ldots,\tilde{M}_{i+kn-1}\big]\bigg) > 0.
\]
Therefore, by \eqref{eq:MtildeTau3} and because $\bigcup_{i=0}^{k-1}\Sigma^{-i} \bpsi^{-1}(D_j)=\fD$, there is $0 \le i < k$  such that 
\[
\inf_{n\in\NN} \nu'\big(\Sigma^n [\underbrace{\tau_i,\ldots,\tau_i}_{n \hbox{ \scriptsize times}}] \cap D_j\big) > 0.
\] 
Note that the constant sequence $(\tau_i)_{n\in\NN}$ may not be in~$D_j$, but the proof of Theorem~\ref{theo:main} also goes through for Pisot directive sequences with positive range that are not contained in~$D_j$ (but in the closure of~$D_j$ relative to~$D$).
Thus $(\tau_i)_{n\in\NN} \in D_j$ is a  periodic Pisot sequence having positive range in $\big(D_j,\Sigma,\frac{\nu'|_{D_j}}{\nu'(D_j)}\big)$ and purely discrete spectrum.
Since the cocycle~$Z$ satisfies the Pisot condition, the same is true for the cocycle $Z_j:\, D_j \to\cM_d$, $(\sigma_n) \mapsto \tr{\!}M_{\sigma_0}$.  
Summing up, we can apply Theorem~\ref{theo:main} to $\big(D_j,\Sigma,Z_j, \frac{\nu'|_{D_j}}{\nu'(D_j)}\big)$.
This proves the result.
\end{proof}

\subsection{Proofs of Theorems~\ref{theo:MCF} and~\ref{theo:MCF2}} \label{subsec:proofsMCF}
We now prove Theorems~\ref{theo:MCF} and~\ref{theo:MCF2} by reducing them to Theorems~\ref{theo:main} and~\ref{theo:main2} (see also Remark~\ref{rem:theo2}), respectively. 

\begin{proof}[Proof of Theorem~\ref{theo:MCF}] 
Recall that $(\Delta,T,A,\nu)$ is a positive $(d{-}1)$-dimensional continued fraction algorithm satisfying the Pisot condition and $\nu \circ T \ll \nu$, that $\bphi$ is a faithful substitutive realization of $(\Delta,T,A,\nu)$, and that there is a periodic Pisot point~$\bx_0$ such that $\bphi(\bx_0)$ has purely discrete spectrum and positive range  in $(\Delta,T,A,\nu)$.
Then we have $(\Delta,T,\nu) \overset{\bphi}{\cong} (\bphi(\Delta),\Sigma,\nu\circ\bphi^{-1})$, hence $\nu \circ \bphi^{-1}$ is an ergodic $\Sigma$-invariant Borel probability measure satisfying $\nu \circ \bphi^{-1} \circ \Sigma \ll \nu \circ \bphi^{-1}$, the linear cocycle $(\bphi(\Delta),\Sigma,Z,\nu\circ\bphi^{-1})$ defined by $Z((\sigma_n)_{n\in\NN}) = \tr{\!}M_{\sigma_0}$ satisfies the Pisot condition, and $\bphi(\bx_0)$ is a periodic Pisot sequence with purely discrete spectrum having positive range in $(\bphi(\Delta),\Sigma,\nu\circ\bphi^{-1})$.
Therefore, by Theorem~\ref{theo:main}, for $\nu$-almost all $\bx \in \Delta$, the $S$-adic dynamical system $(X_{\bphi(\bx)},\Sigma)$ is a natural coding of the minimal translation by $\pi'(\bu)$ on~$\TT^{d-1}$ with respect to the partition $\{-\cR'_{\bphi(\bx)}(i) \,:\, i \in \cA\}$ of the bounded fundamental domain $-\cR'_{\bphi(\bx)}$, where $\bu$ is the  generalized right eigenvector of~$\bphi(\bx)$ normalized by $\|\bu\|_1=1$.
Since $\bx$ is the generalized right eigenvector of~$\bphi(\bx)$ satisfying $\|\bx\|_1=1$, we have $\bx=\bu$, which proves Theorem~\ref{theo:MCF}.
\end{proof} 

Theorem~\ref{theo:MCF2} follows from Theorem~\ref{theo:main2} in the following way.

\begin{proof}[Proof of Theorem~\ref{theo:MCF2}] 
Recall that $(\Delta,T,A,\nu)$ is a positive $({d{-}1})$-dimensional continued fraction algorithm satisfying the Pisot condition and $\nu \circ T \ll \nu$, and that there is
a periodic Pisot point $\bx_0 \in \Delta$ having positive range in  $(\Delta,T,A,\nu)$. Define $\bseta: \Delta \to \cM_d^{\NN}$ by $\bx \mapsto (\tr{\!}A(T^n \bx))_{n\in\NN}$. 
Then we have $(\Delta,T,\nu) \overset{\bseta}{\cong} (\bseta(\Delta),\Sigma,\nu\circ\bseta^{-1})$, hence, $\nu \circ \bseta^{-1}$ is an ergodic $\Sigma$-invariant Borel probability measure satisfying $\nu \circ \bseta^{-1} \circ \Sigma \ll \nu \circ \bseta^{-1}$, the linear cocycle $(\bseta(\Delta),\Sigma,Z,\nu\circ\bseta^{-1})$ defined by $Z((M_n)_{n\in\NN}) = \tr{\!}M_0$ satisfies the Pisot condition, and $\bseta(\bx_0)$ has positive range  in $(\bseta(\Delta),\Sigma,\nu\circ\bseta^{-1})$.
Therefore, by Theorem~\ref{theo:main2}, there exists a positive integer~$k$ and a map $\bpsi: \bseta(\Delta) \to \cS_d^{\NN}$ (which we choose as in \eqref{eq:bpsidef}) satisfying $\bpsi \circ \Sigma^k = \Sigma \circ \bpsi$ such that, for $\nu$-almost all $\bx \in \Delta$, the $S$-adic dynamical system $(X_{\bpsi\circ\bseta(\bx)},\Sigma)$ is a natural coding of the minimal translation by $\pi'(\bx)$ on~$\TT^{d-1}$ with respect to a partition of a bounded fundamental domain.
Setting $\bphi = \bpsi \circ \bseta$, we obtain that the diagram
\[
\begin{tikzcd}
\Delta \arrow[r, "T^k"]\arrow[d,"\bseta"] \arrow[dd,swap,"\bphi",bend right=50] & \Delta \arrow[d, swap,"\bseta"] \arrow[dd,"\bphi",bend left=50] \\
\bseta(\Delta) \arrow[r, "\Sigma^k"]\arrow[d,"\bpsi"] & \bseta(\Delta) \arrow[d, swap,"\bpsi"] \\
\bphi(\Delta) \arrow[r, "\Sigma"]& \bphi(\Delta)
\end{tikzcd}
\]
commutes. 
Because we have chosen $\bpsi$ as in \eqref{eq:bpsidef}, $\bphi$ is a substitutive realization of $(\Delta,T^k,A,\nu)$ such that for $\nu$-almost all $\bx \in \Delta$ the $S$-adic dynamical system $(X_{\bphi(\bx)},\Sigma)$ is a natural coding of the minimal translation by $\pi'(\bx)$ on~$\TT^{d-1}$ with respect to the partition $\{-\cR'_{\bphi(\bx)}(i) \,:\, i \in \cA\}$ of the bounded fundamental domain $-\cR'_{\bphi(\bx)}$. 
This implies that $(X_{\bphi(\bx)},\Sigma)$ has purely discrete spectrum.
Since by construction, $\bx$~is a generalized right eigenvector of~$\bphi(\bx)$, the map $\bphi$ is injective, thus $(\Delta,T^k,\nu) \overset{\bphi}{\cong} (\bphi(\Delta),\Sigma,\nu\circ\bphi^{-1})$.
\end{proof} 

\subsection{Proof of Theorem \ref{t:nc} }\label{subsec:proofBRS}
We now establish the relation between a natural coding with $d$ atoms, bounded remainder sets, and Rauzy fractals asserted in Theorem~\ref{t:nc}. 
To this end, we need Lemma~\ref{l:strongconvergence} that states in a nutshell that   balance implies strong convergence. 
Like in Section~\ref{sec:cf}, strong convergence refers to the convergence of the column vectors $M_{[0,n)} \be_i $ towards multiples of the generalized right eigenvector~$\bu$, for a sequence   $\bsigma \in \cS_d^{\NN}$.
Lemma~\ref{l:strongconvergence} was proved in \cite[Proposition~4.3]{BST:19} with the additional assumption that $\bsigma$ is recurrent. 
We give a slightly simpler proof that does not require recurrence.
Recall that $\pi_{\bu}$ denotes the projection along~$\bu$ onto~$\bone^\perp$.

\begin{lemma} \label{l:strongconvergence}
Let $\bsigma \in \cS_d^{\NN}$. 
If $L_{\bsigma}$ is balanced and the generalized right eigenvector~$\bu$ of~$\bsigma$ has rationally independent coordinates, then $\lim_{n\to\infty} \pi_{\bu} M_{\sigma_{[0,n)}} \be_i = \mathbf{0}$ for all $i \in \cA$ and
\begin{equation} \label{e:Lsn}
\lim_{n\to\infty} \sup\big\{ \|\pi_{\mathbf{u}} M_{\sigma_{[0,n)}}\, \bl(w)\| \,:\,  w \in L_{\Sigma^n\bsigma}\big\} = 0.
\end{equation}
\end{lemma}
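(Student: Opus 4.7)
I would prove the lemma in two stages: first boundedness from balance, then convergence to~$0$ from rational independence.

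\emph{Boundedness.} A standard subadditivity argument shows that $C$-balance of $L_{\bsigma}$ implies the existence of letter frequencies $\boldsymbol{\mu}\in\RR^d_{\ge 0}$ with $\|\boldsymbol{\mu}\|_1 = 1$ and $\|\bl(w) - |w|\boldsymbol{\mu}\|_\infty \le C$ for every $w \in L_{\bsigma}$. To identify $\boldsymbol{\mu}$, observe that $\bl(\sigma_{[0,n)}(i))/|\sigma_{[0,n)}(i)|$ converges to $\boldsymbol{\mu}$ by the balance bound, and must also converge to $\bu/\|\bu\|_1$ because these normalized columns lie inside $\bigcap_n M_{[0,n)}\RR^d_+ = \RR_+\bu$; hence $\boldsymbol{\mu} = \bu/\|\bu\|_1$, and the balance estimate reads $\|\pi'_{\bu}\bl(w)\| \le C$ for all $w \in L_{\bsigma}$. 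Since every $w \in L_{\Sigma^n\bsigma}$ satisfies $\sigma_{[0,n)}(w) \in L_{\bsigma}$ (as a factor of some $\sigma_{[0,n+k)}(j)$), this yields the uniform estimate $\|\pi'_{\bu} M_{[0,n)}\bl(w)\| = \|\pi'_{\bu}\bl(\sigma_{[0,n)}(w))\| \le C$, bounding both $\pi'_{\bu} M_{[0,n)}\be_i$ and the supremum in~\eqref{e:Lsn}.

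\emph{Convergence to zero.} Set $K_n := \overline{\{\pi'_{\bu} M_{[0,n)}\bl(w) \,:\, w \in L_{\Sigma^n\bsigma}\}} \subset \bone^\perp$. For $w \in L_{\Sigma^{n+1}\bsigma}$ we have $\sigma_n(w) \in L_{\Sigma^n\bsigma}$ and $M_{[0,n+1)}\bl(w) = M_{[0,n)}\bl(\sigma_n(w))$, hence $K_{n+1} \subseteq K_n$. Thus $K_\infty := \bigcap_n K_n$ is a nonempty compact subset of $\bone^\perp$ containing~$0$, and the lemma reduces to $K_\infty = \{\mathbf{0}\}$. Suppose toward contradiction that $\by \in K_\infty \setminus \{\mathbf{0}\}$, and pick integer vectors $\bz_n = \bl(\sigma_{[0,n)}(w_n)) \in \ZZ^d$ with $\pi'_{\bu}\bz_n \to \by$; writing $L_n = |\sigma_{[0,n)}(w_n)|$, this says $\bz_n - L_n \boldsymbol{\mu} \to \by$ with $L_n \to \infty$. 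Rational independence of the coordinates of $\bu$ makes $\pi'_{\bu}\colon \ZZ^d \to \bone^\perp$ injective and forces $\ZZ^d \cap \RR\bu = \{\mathbf{0}\}$. Using the substitutive recursion $\bz_{n+1} - \bz_n \in M_{[0,n)}\ZZ^d$ together with $\pi'_{\bu}(\bz_{n+1} - \bz_n) \to \mathbf{0}$, one forces nonzero integer combinations to cluster onto the line $\RR\bu$; combined with the arithmetic rigidity above this contradicts $\by \ne \mathbf{0}$, so $K_\infty = \{\mathbf{0}\}$.

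\emph{Main obstacle.} The crux is the last Diophantine step. Balance alone only yields boundedness, while rational independence alone only yields injectivity of $\pi'_{\bu}$ on $\ZZ^d$; combining the two to exclude every nonzero accumulation point in~$K_\infty$ is the core work. Since we do not assume recurrence of~$\bsigma$ as in~\cite[Proposition~4.3]{BST:19}, we cannot replicate early configurations at later scales to force cancellation; instead we must exploit only the decreasing filtration~$(K_n)$ together with the rigidity provided by $\ZZ^d \cap \RR\bu = \{\mathbf{0}\}$. Making this final rigidity argument precise, without invoking recurrence, is where the simplification announced in the paper lives and where the principal technical difficulty resides.
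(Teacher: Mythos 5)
Your first stage (boundedness) and your reduction are fine: balance gives a uniform bound $\|\pi'_{\bu}\bl(v)\|\le C$ on $L_{\bsigma}$, the sets $K_n$ are indeed nested compacta because $\sigma_n(w)\in L_{\Sigma^n\bsigma}$ for $w\in L_{\Sigma^{n+1}\bsigma}$ and $M_{[0,n+1)}\bl(w)=M_{[0,n)}\bl(\sigma_n(w))$, and $\bigcap_n K_n=\{\mathbf{0}\}$ would give \eqref{e:Lsn} (and the statement about $\be_i$, since each letter lies in every $L_{\Sigma^n\bsigma}$). But the heart of the lemma is exactly the step you leave open, and the sketch you give for it does not have the right shape to close it. From $\by\in K_\infty\setminus\{\mathbf{0}\}$ you extract $\bz_n=\bl(\sigma_{[0,n)}(w_n))$ with $\pi'_{\bu}\bz_n\to\by$ and note $\bz_{n+1}-\bz_n\in M_{[0,n)}\ZZ^d$ with $\pi'_{\bu}(\bz_{n+1}-\bz_n)\to\mathbf{0}$; however, the existence of integer vectors $\bv_n$ with $\pi'_{\bu}M_{[0,n)}\bv_n\to\mathbf{0}$ is not in tension with anything --- it is precisely the conclusion of the lemma (for $\bv_n=\be_i$), so no contradiction with $\by\ne\mathbf{0}$ can be extracted from it, and injectivity of $\pi'_{\bu}$ on $\ZZ^d$ plus $\ZZ^d\cap\RR\bu=\{\mathbf{0}\}$ alone cannot rule out a nonzero accumulation point: these facts are insensitive to which integer points actually arise as abelianized images $M_{[0,n)}\bl(w)$, which is where the contraction must come from. (The only case your argument genuinely handles is the degenerate one $\bz_{n+1}=\bz_n$ eventually, where $\bz\in\bigcap_m M_{[0,m)}\RR_+^d=\RR_+\bu$ forces $\bz=\mathbf{0}$.)

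What is actually needed, and what the paper supplies, is combinatorial input beyond balance and irrationality of~$\bu$: irrationality first yields primitivity of $\bsigma$; one then works with a limit word $\omega^{(n)}$ of $\Sigma^n\bsigma$ and its return words $Q_n$ to a letter, and uses positivity of $M_{[k,n)}$ together with the nested prefix sets $P_k^{(i_k)}\subseteq P$ of the limit word to show $\|\pi'_{\bu}M_{[0,n)}\bl(w)\|\le 2\varepsilon_k$ for all $w\in Q_n$, where $\varepsilon_k\to 0$ comes from exhausting the bounded set $\pi'_{\bu}\bl(P)$. Rational independence enters to guarantee that $\bl(Q_n)$ spans $\RR^d$ (no hyperplane $\bv^\perp$ with $\bv\in\ZZ^d$ can contain it), and a further combinatorial manipulation (decomposing factors as $v_1a_1\cdots v_\ell a_\ell$ with $v_j\in Q_n$ and distinct letters $a_j$) produces a basis of $\RR^d$ consisting of $\{0,1\}$-vectors on which $\pi'_{\bu}M_{[0,n)}$ is small; since there are only finitely many such bases, one basis works for infinitely many $n$, which gives $\pi'_{\bu}M_{[0,n)}\be_i\to\mathbf{0}$ and then \eqref{e:Lsn} by the same decomposition. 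None of this machinery (return words, positivity of blocks, the $\{0,1\}$-basis finiteness trick) appears in your proposal, and you acknowledge the final rigidity step is unresolved; as it stands the proposal reduces the lemma correctly but does not prove it, and the specific contradiction you aim for cannot work without an argument that quantitatively contracts the lattice directions transverse to $\bu$.
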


\begin{proof}
Assume that $\bsigma = (\sigma_n)_{n\in\NN} \in \cS_d^{\NN}$ has balanced language~$L_{\bsigma}$ and a generalized right eigenvector~$\bu$ with rationally independent coordinates.
We first show that $\bsigma$ is a primitive sequence of substitutions. 
Suppose that there exists $k \in \NN$ such that $M_{\sigma_{[k,n)}}$ is not positive for all $n > k$. 
Then there exist coordinates $i,j$ such that the $(i,j)$-element of $M_{\sigma_{[k,n)}}$ is $0$ for infinitely many~$n$, i.e., $M_{\sigma_{[k,n)}} \be_j \in \be_i^\perp$.
Since the cones $M_{\sigma_{[k,n)}} \RR_{\ge0}^d$ form a nested sequence of nonempty compact sets, their intersection is nonempty,  and we obtain that $ \be_i^\perp \cap \bigcap_{n\in\NN} M_{\sigma_{[k,n)}} \RR_{\ge0}^d  \ne \{\mathbf{0}\}$, thus $M_{\sigma_{[0,k)}} (\be_i^\perp) \cap \bigcap_{n\in\NN} M_{\sigma_{[0,n)}} \RR_{\ge0}^d  \ne \{\mathbf{0}\}$,  which implies that $\bu \in M_{\sigma_{[0,k)}} (\be_i^\perp)$, contradicting that $\bu$ has rationally independent coordinates.
Therefore, $\bsigma$ is primitive.

Choose a sequence $(i_n)_{n\in\NN} \in \cA^{\NN}$ such that $i_n \preceq \sigma_n(i_{n+1})$ for all $n \in \NN$, and let $\omega^{(n)}$ be such that $\sigma_{[n,\ell)}(i_\ell) \prec \omega^{(n)}$ for all $\ell > n$, i.e., $\omega^{(n)}$~is a so-called \emph{limit sequence} of $\Sigma^n \bsigma$. 
Set 
\[
P = \{w \in \cA^* \,:\, w \prec \omega^{(0)}\} \quad \mbox{and} \quad
P_n = \{w \in \cA^* \,:\, w \prec \sigma_{[0,n)}(i_n)\} \quad (j \in \cA,\, n \in \NN). 
\]
Since $\bsigma$ is balanced, the set $\pi_{\bu} \bl(P)$ is bounded by  Proposition~\ref{prop:balance}.
From $P_0 \subseteq P_1 \subseteq \cdots \subseteq \bigcup_{n\in\NN} P_n = P$, we obtain that there is a sequence of positive numbers $(\varepsilon_n)_{n\in\NN}$ with $\lim_{n\to\infty} \varepsilon_n = 0$ such that 
\begin{equation} \label{e:epsn}
\|\bx\| \le \varepsilon_n \quad \mbox{for all}\ \bx \in \bone^\perp \ \mbox{satisfying} \quad \bx + \pi_{\bu} \bl\big(P_n\big) \subseteq \pi_{\bu} \bl(P). 
\end{equation}

We can now show that $\pi_{\bu} M_{\sigma_{[0,n)}} \bl(Q_n)$ is small, where
\[
Q_n = \{w \in \cA^* \,:\, \mbox{$pj \prec \omega^{(n)}$ and $pwj \prec \omega^{(n)}$ for some $p \in \cA^*$, $j \in \cA$}\}
\]
is the set of return words in~$\omega^{(n)}$ to some letter.
More precisely, we have 
\begin{equation} \label{eq:epsK}
\|\pi_{\bu} M_{\sigma_{[0,n)}} \bl(w)\| \le 2\varepsilon_k \ \mbox{for all $w \in Q_n$, $k < n$, provided that $M_{\sigma_{[k,n)}}$ is a positive matrix}. 
\end{equation}
To prove this, let $w\in  Q_n$.
If $M_{\sigma_{[k,n)}}$ is a positive matrix (which holds for sufficiently large $n$ by the primitivity of~$\bsigma$) and $j \in \cA$, then there exists $v \in \cA^*$ with $v\, i_k \preceq \sigma_{[k,n)}(j)$. 
Because $w\in Q_n$, we have some $p \in \cA^*$ such that
\[
\sigma_{[0,n)}(p) \sigma_{[0,k)}(v) u \preceq \sigma_{[0,n)}(p) \sigma_{[0,k)}(v\, i_k) \preceq \sigma_{[0,n)}(p\,j) \prec \sigma_{[0,n)}(\omega^{(n)}) = \omega^{(0)} \quad \mbox{for all}\ u \in P_k,
\]
and the same holds when we replace $p$ by~$pw$, thus
\[
\pi_{\bu}\bl\big(\sigma_{[0,n)}(p) \sigma_{[0,k)}(v)\big) + \pi_{\bu} \bl\big(P_k\big) \subseteq \pi_{\bu} \bl(P) \ \mbox{and} \ \pi_{\bu}\bl\big(\sigma_{[0,n)}(pw) \sigma_{[0,k)}(v)) + \pi_{\bu} \bl\big(P_k\big) \subseteq \pi_{\bu} \bl(P).
\]
From \eqref{e:epsn}, we obtain that
\[
\|\pi_{\bu} M_{\sigma_{[0,n)}} \bl(w)\| \le \big\|\pi_{\bu}\bl\big(\sigma_{[0,n)}(p) \sigma_{[0,k)}(v)\big)\big\| + \big\|\pi_{\bu}\bl\big(\sigma_{[0,n)}(pw) \sigma_{[0,k)}(v))\big\| \le 2\varepsilon_k.
\]

Next we show that, for each $n \in \NN$, the Minkowski sum 
\begin{equation}\label{eq:MinkowskiBasis}
\bl(Q_n)-\sum_{j=1}^d\bl(Q_n) \ \mbox{contains a basis of $\RR^d$ with vectors in $\{0,1\}^d$}.
\end{equation} 
First note that $\bl(Q_n)$ contains a basis of~$\RR^d$ by the rational indepence of~$\bu$ and the balance of~$L_{\bsigma}$. 
If this was not the case then, since $\bl(Q_n) \subset \ZZ^d$, there would be $\bv^\perp \in \ZZ^d$ with $\bl(Q_n) \subset \bv^\perp$. 
However, such~$\bv$ cannot exist because $Q_n$ contains arbitrarily long factors of $\omega^{(n)}$, hence $M_{\sigma_{[0,n)}} \bl(Q_n)$ contains arbitrarily large vectors with bounded distance from~$\RR \bu$ (by the balance of~$L_{\bsigma}$), which implies that $\bu \in M_{\sigma_{[0,n)}} \bv^\perp$, contradicting that $\bu$ is rationally independent. Thus we may choose words $w_i \in Q_n$ such that $\{\bl(w_i): 1 \le i \le d\}$ forms a basis of~$\RR^d$. 
If $\bl(w_i) \in \{0,1\}^d$ for all~$i$, then we have found a basis of the required form because $\mathbf{0} \in \bl(Q_n)$. 
Otherwise note that each  nonempty factor~$w$ of $\omega^{(n)}$ can be written as
\begin{equation} \label{e:wdecomposition}
w = v_1 a_1 v_2 a_2 \cdots v_\ell a_\ell \quad \mbox{with}\ 1 \le \ell \le d,\, v_j \in Q_n,\,  a_j \in \cA \ \mbox{for all}\ 1 \le j \le \ell,\, a_j \ne a_k\ \mbox{if}\ j \ne k.
\end{equation}
Indeed, let $a_1$ be the first letter of~$w$ and $v_1$ the longest (possibly empty) word such that $v_1 a_1 \preceq w$; then $v_1 \in Q_n$ and $(v_1 a_1)^{-1} w$ has no occurrence of~$a_1$; if $w \ne v_1 a_1$, then let $a_2 \in \cA$ be the first letter of $(v_1 a_1)^{-1} w$ and $v_2$ the longest word such that $v_2 a_2 \preceq (v_1 a_1)^{-1} w$; repeat this procedure until $(v_1a_1\cdots v_\ell a_\ell)^{-1} w$ (which has no occurrences of $a_1,\dots,a_\ell$) is the empty word. 
Now, if $\bl(w_i) \notin \{0,1\}^d$ and $w_i = v_1 a_1 v_2 a_2 \cdots v_\ell a_\ell$, then we can replace $w_i$ by the shorter word~$v_j$ for some~$j$ or, when all $\bl(v_j)$ are in the span of the other basis vectors, we replace $\bl(w_i)$ by $\bl(w_i)-\sum_{j=1}^\ell\bl(v_j)$ without losing the basis property. 
Since $\bl(w_i)-\sum_{j=1}^\ell\bl(v_j) = \bl(a_1\cdots a_\ell) \in \{0,1\}^d$ and the replacement by a shorter word can happen only finitely many times, this proves \eqref{eq:MinkowskiBasis}. 

From \eqref{eq:epsK} and \eqref{eq:MinkowskiBasis} we see that, for each $n \in \NN$, there is a basis of $\RR^d$ with vectors $\bx \in \{0,1\}^d$ satisfying $\|\pi_{\bu} M_{\sigma_{[0,n)}} \bx\| \le 2(d{+}1)\varepsilon_k$ for all $k < n$ such that $M_{\sigma_{[k,n)}}$ is positive. 
In particular, we have the same basis for infinitely many~$n$, and obtain that $\lim_{n\to\infty} \pi_{\bu} M_{\sigma_{[0,n)}} \be_i = \mathbf{0}$ for all~$i \in \cA$. 

Finally, let $w \in L_{\Sigma^n\bsigma}$. 
By primitivity, $w$~is a factor of~$\omega^{(n)}$. 
Writing $w$ as in~\eqref{e:wdecomposition}, we obtain that $\|\pi_{\bu} M_{\sigma_{[0,n)}} \bl(w)\| \le 2d\varepsilon_k + \sum_{i=1}^d \|\pi_{\bu} M_{\sigma_{[0,n)}} \be_i\|$ for all $k < n$ such that $M_{\sigma_{[k,n)}}$ is positive. 
This proves the lemma.
\end{proof}

Before we start with the core part of the proof of Theorem~\ref{t:nc}, we need the following variant of a result of Chevallier~\cite{Chevallier}.

\begin{lemma}[{cf.\ \cite[Theorems~A and~B]{Chevallier}}]\label{lem:ncChev}
Let $(X,\Sigma)$ be a bounded natural coding of $(\TT^d,R_\bt)$ w.r.t.\ a natural partition $\{\cF_1,\dots,\cF_h\}$ of a fundamental domain $\cF$. Then there is a continuous surjective map $\chi: X \to \cF$ and a one-to-one coding map $\Phi$ defined a.e.\ on~$\cF$ that satisfies $\chi \circ \Phi(\bx) = \bx$ for a.e.\ $\bx \in \cF$. 
Furthermore, the shift  $(X, \Sigma)$ is minimal and uniquely ergodic, 
and $(\TT^d,R_\bt)$ is measure-theoretically isomorphic to $(X, \Sigma)$. Thus $(X, \Sigma)$ has purely discrete measure-theoretic spectrum.
\end{lemma}

\begin{proof}[Sketch of the proof]\label{rem:che}
The proof of this lemma is very similar to the proofs of {\cite[Theorems~A and~B]{Chevallier}} (which are valid for $\TT^d$ according to the remark after their statement), and we will refer to these proofs in the present sketch. Also observe that the aperiodicity condition of {\cite[Theorems~A and~B]{Chevallier}} is used at the beginning of the proof of \cite[Theorem~A]{Chevallier} just to ensure that for each $(i_0i_1\dots)\in X$ the set $\bigcap_{n\in\NN}\overline{\bigcap_{k=0}^n \tilde{R}_\bt^{-k} \mathring{\cF}_{i_k}}$ is either empty or a singleton. This is true by assumption in our setting. 

Define $\chi: X \to \cF$ by $(i_0i_1\cdots)\mapsto \bigcap_{n\in\NN}\overline{\bigcap_{k=0}^n \tilde{R}_\bt^{-k} \mathring{\cF}_{i_k}}$. This is well defined because the intersection is exactly one point by the definition of a natural coding. Continuity of $\chi$ is proved in the same way as in the proof of  {\cite[Theorem~A]{Chevallier}} (in this part of the proof, we need the natural coding to be bounded), surjectivity follows because $\chi(X)$ is dense in $\cF$ and compact. Thus $\chi(X)=\cF$ and $\cF$ is bounded. Clearly, also $\cF_1,\ldots, \cF_h$ are bounded. We can also define a map $\Phi$ for a.e. $\bx\in \cF$, more precisely, for every $\bx \in \mathcal{G}$ with
\[
\mathcal{G}=\cF \setminus \bigcup_{n\in\NN}\bigcup_{i=1}^h \tilde R_\bt^{-n}\partial \cF_i,
\]
 by associating with $\bx$ the natural coding of its orbit under the action~$\tilde{R}_\bt$ w.r.t.\ the natural partition $\{\cF_1,\dots,\cF_h\}$. Now $\chi \circ \Phi(\bx) = \bx$ for each $\bx \in \mathcal{G}$ follows from the definition of a natural coding.

Define $\chi': X \to \TT^d$ by $\chi'=\chi \pmod{\TT^d}$. Then $\chi'$ is obviously continuous and surjective. Following the proof of {\cite[Theorem~A]{Chevallier}}, we can show that $(\TT^d,R_\bt)$ and $(\cF, \tilde R_\bt)$ are topological factors of $(X, \Sigma)$, in particular, $R_\bt \circ\chi'=\chi' \circ\Sigma$ and $\tilde R_\bt \circ\chi=\chi \circ\Sigma$.

The proof of minimality of $(X, \Sigma)$ deviates a bit from Chevallier's proof, and we provide the details. Fix $\omega\in X$. We want to prove that the orbit of $\omega$ is dense in $X$. Let $\omega'=(i_0i_1 \cdots )$ be an arbitrary element of the set
$\Phi(\mathcal{G})$ (which is dense in $X$) and let $U=\left[i_0, \ldots, i_n\right]$ be a neighborhood of~$\omega'$ (for some $n\in \NN$). The open set $\mathcal{V}=\bigcap_{k=0}^{n} \tilde R_{\bt}^{-i_k} \mathring{\cF}_{i_k}$ is nonempty because $\bx=\chi(\omega')$ is in~$\mathcal{V}$. Since $R_{\bt}$ is minimal, there exists an integer $m \geq 0$ such that $R_{\bt}^{m}  \circ \chi(\omega)$ belongs to~$\mathcal{V}$.
But $\tilde R_\bt \circ\chi=\chi \circ\Sigma$ and, hence, $\chi \circ\Sigma^m(\omega)$ belongs to $\mathcal{V}$. Since $\{\cF_1,\dots,\cF_h\}$ is a natural partition, for each $i \leq k$, $\mathring{\cF}_{i}\cap \cF_{j}=\emptyset$. 
By the definition of~$\chi$, we have $\chi^{-1}(\mathcal{V}) \subset U$, and therefore $\Sigma^{m}(\omega) \in U$. So all the elements of $Z$ are limit points of the sequence $(\Sigma^{m}(\omega))_{m \geq 0}$, which shows that $(X, \Sigma)$ is minimal.

The remaining assertions are of a measure theoretic nature. Thus for the proofs of these assertions it is immaterial that the partition $\{\cF_1,\dots, \cF_h\}$ is measure theoretic. They follow by rephrasing the proof of {\cite[Theorem~B]{Chevallier}} {\it verbatim}.
\end{proof}

\begin{proof}[Proof of Theorem~\ref{t:nc}]
Let $(X,\Sigma)$ be the natural coding of the minimal translation~$R_{\bt}$ on~$\TT^{d-1}$ w.r.t.\ the natural partition $\{\cF_1,\ldots, \cF_d\}$ of the bounded fundamental domain~$\cF$. 
We consider the associated exchange of domains~$\tilde{R}_{\bt}$ defined on~$\cF$; see Section \ref{sec:natur-codings-bound}.
Let $\bt_i$ be such that $\tilde{R}_{\bt}(\bx) = \bx + \bt_i$ on~$\cF_i$ (note that $\bt_i-\bt \in \ZZ^d$), and let $\bu = (u_1,\dots,u_d)$ with $u_i = \lambda(\cF_i)$, where  $\lambda$ denotes the Lebesgue measure. 
Then we have $\sum_{i=1}^d u_i = 1$.
Since $\cF$ is bounded and $(\cF,\tilde{R}_{\bt},\lambda|_\cF)$ is ergodic, we have for almost all $\bx \in \cF$, by the Birkhoff Ergodic Theorem,
\[
\sum_{i=1}^d u_i \bt_i = \sum_{i=1}^d \bt_i \int_{\cF_i} \mathrm{d}\lambda = \lim_{n\to\infty} \frac{1}{n} \sum_{k=0}^{n-1} \big(\tilde{R}_{\bt}^{k+1}(\bx) - \tilde{R}_{\bt}^k(\bx)\big) = \lim_{n\to\infty} \frac{1}{n} \big(\tilde{R}_{\bt}^n(\bx) - \bx\big) = \mathbf{0}.
\]

Define a matrix $N \in \RR^{(d-1)\times d}$ by $N \be_i = \bt_i$, i.e., the columns of~$N$ are the vectors~$\bt_i$. 
Then we have $N \bu = \mathbf{0}$ and, by the minimality of~$\tilde{R}_{\bt}$, the vectors~$\bt_i$ span~$\RR^{d-1}$, thus the kernel of~$N$ is~$\RR \bu$.
Hence we have $\|\bx\|_\infty \le c\, \|N \bx\|_\infty$ for all $\bx \in \bone^\perp$, with $1/c = \min\{\|N \bx\|_\infty \,:\, \bx\in \bone^\perp, \|\bx\|_\infty = 1\} > 0$. If $w$ is in the language of~$X$, then $N \bl(w) = \sum_{i=1}^d |w|_i \bt_i  = \tilde{R}_{\bt}^{|w|}(\bx) - \bx$ for some $\bx \in \cF$, thus $\|N \bl(w) \|_\infty \le \mathrm{diam}(\cF)$, where $\mathrm{diam}(\cF)$ denotes the diameter of~$\cF$.
Hence, we have 
\[
|w|_i - |w|\, u_i \le \|\bl(w) - |w|\, \bu\|_\infty \le c\, \|N\,(\bl(w) - |w|\, \bu)\|_\infty =  c\, \|N\bl(w) \|_\infty \le c\, \mathrm{diam}(\cF).
\]
Therefore, $\cF_i$~is a bounded remainder set for all $1 \le i \le d$, and the language of~$X$ is $(2\, c\, \mathrm{diam}(\cF))$-balanced.

Minimality of~$R_{\bt}$ implies total irrationality of~$\bt$. 
We will show that this in turn implies that the vector $\bu = (\lambda(\cF_1),\dots,\lambda(\cF_d))$ has rationally independent coordinates. 
Indeed, suppose on the contrary that $\langle \bz, \bu\rangle = 0$ for some $\bz \in \ZZ^d \setminus \{\mathbf{0}\}$. 
Consider the $d\times d$ matrix $\tilde{N}$ that is obtained from~$N$ by subtracting~$\bt$ from each column and adding the row $(1,\dots,1)$ at the bottom. 
Because $\bt_i - \bt \in \ZZ^d$, the matrix~$\tilde{N}$ is an integer matrix. 
Moreover, since $N \bu = 0$ and $\|\bu\|_1=1$, we have $\tilde{N} \bu = \binom{-\bt}{1}$. 
If $\det \tilde{N} \neq 0$, then we have $\tr{\!}\tilde{N} \by = \bz$ for some $\by \in \QQ^d \setminus \{\mathbf{0}\}$; if $\det \tilde{N} = 0$, then we have $\tr{\!}\tilde{N} \by = \mathbf{0}$ for some $\by \in \ZZ^d \setminus \{\mathbf{0}\}$. 
In both cases, we have $0 = \langle \tr{\!}\tilde{N} \by, \bu\rangle = \langle \by,  \binom{-\bt}{1} \rangle$, contradicting the total irrationality of~$\bt$.

Assume now that $X = X_{\bsigma}$ for some sequence of substitutions $\bsigma \in \cS_d^{\NN}$. 
Because $\cF_1,\dots,\cF_d$ are bounded remainder sets with measures $u_1,\ldots,u_d$, $X_{\bsigma}$~has uniform letter frequencies. 
Thus \cite[Theorem~5.7]{Berthe-Delecroix} implies that $\bu = (u_1,\ldots,u_d)$ is the (rationally independent) generalized right eigenvector of~$\bsigma$ normalized by $\|\bu\|_1=1$ (moreover, $\bsigma$ is primitive; see the first part of the proof of Lemma~\ref{l:strongconvergence}). 
Let $\omega^{(0)} \in X_{\bsigma}$ be as in the proof of Lemma~\ref{l:strongconvergence}, and write $\omega^{(0)} = \omega_0 \omega_1 \cdots$ with $\omega_n \in \cA$. 
Then there is some $\bz \in \cF$ such that $R_{\bt}^n(\bz) \in \cF_{\omega_n}$ for all $n \in \NN$.
Define the affine map $H: \RR^d \to \RR^{d-1}$ by $H(\bx) = \bz + N \bx$. 
Then, because $\RR \bu$ is in the kernel of~$N$, we have $H(\bx) = H(\pi_{\bu} \bx)$, in particular $H(\pi_{\bu} \be_i) = \bz + \bt_i$. 
By minimality, we have
\begin{equation}\label{eq:Fi}
\cF_i = \overline{\big\{\bz + N \bl(p) \,:\, p \in \cA^*,\, p\, i \prec \omega^{(0)}\big\}} \subseteq H(\cR_{\bsigma}(i)) \quad \mbox{for all}\ i \in \cA.
\end{equation}
On the other hand, if $p\, i \preceq \sigma_{[0,n)}(j)$ for infinitely many $(n,j) \in \NN \times \cA$, then for all these~$n$ there are words $w_n \in L_{\Sigma^n\bsigma}$ such that $\sigma_{[0,n)}(w_n)\, p\, i \prec \omega^{(0)}$, which implies $H\big(M_{\sigma_{[0,n)}} \bl(w_n) + \bl(p)\big) \in \cF_i$ for infinitely many~$n$ and, by Lemma~\ref{l:strongconvergence}, $H(\bl(p)) \in \cF_i$. Hence, we have $H(\cR_{\bsigma}(i)) \subseteq \cF_i$, thus $H(\cR_{\bsigma}(i)) = \cF_i$.
This means that $(\cF, \tilde{R}_\bt)$ is the domain exchange $H(\cR_{\bsigma}(i)) \mapsto H(\cR_{\bsigma}(i)) + H(\pi_{\bu} \be_i)$. 
Therefore, $(\cF, \tilde{R}_\bt)$ is conjugate to the domain exchange $\cR'_{\bsigma}(i) \mapsto \cR'_{\bsigma}(i) + \be_i' - \bu'$, and $(X_{\bsigma},\Sigma)$ is a natural coding of~$R_{\bu'}$ w.r.t.\ the natural partition $\{-\cR'_{\bsigma}(i) \,:\, 1 \le i \le d\}$, by the same arguments as in the proof of Theorem~\ref{theo:main}.

Assume now that the directive sequence~$\bsigma$ is left proper. 
Then by \cite[Lemma~3.2]{BCDLPP:19} the shift $(X_{\bsigma},\Sigma)$ can be represented as $(X_{\bsigma'},\Sigma)$, where $\bsigma'$ is proper (and still unimodular). 
Like~$\bsigma$, also $\bsigma'$ is primitive; see again the first part of the proof of Lemma~\ref{l:strongconvergence}. 
From \cite[Corollary~5.5]{BCDLPP:19}, we gain that if a primitive unimodular proper $S$-adic shift $(X_{\bsigma},\Sigma)$ is balanced for letters, then it is also balanced for words. 
Hence, cylinders associated with factors are also bounded remainder sets, by Proposition~\ref{prop:balance}.
\end{proof}

\section{Examples}\label{sec:ex}
In this section, we show that our theory can easily be applied to well-known multidimensional continued fraction algorithms, in particular to the Jacobi--Perron, Brun,  (Cassaigne--)Selmer and Arnoux--Rauzy algorithms. 
For dimension $d=3$, corresponding results for the Brun and the Arnoux--Rauzy algorithms were already given in \cite{BST:19},  and for the Cassaigne--Selmer
algorithm in~\cite{Fogg:20}. 
Using our new theory, the conditions we need to check are easier to verify than the ones in \cite{BST:19,Fogg:20}. 
This even allows us to treat the Arnoux--Rauzy algorithm in arbitrary dimension (see Section~\ref{subsec:AR}), the (multiplicative) Jacobi--Perron algorithm ($d=3$) in Section~\ref{subsec:JP} and the Brun algorithm for $d=4$ in Section~\ref{subsec:Brun}.
We start with the Cassaigne--Selmer algorithm ($d=3$) in Section~\ref{sec:exSelmer}, for which we can also prove more general results than the ones in~\cite{Fogg:20}.

Save for the Arnoux--Rauzy algorithm, we focus on algorithms with dimension $d \in \{3,4\}$. 
For higher dimensions, the main problem is to prove the Pisot condition (see Definition~\ref{d:Pisot}). 
Usually, heavy computer calculations are needed to prove that the second Lyapunov exponent of an algorithm is negative; see \cite{berth2021second} for the Selmer algorithm with $d=4$. 
Moreover, somewhat surprising numerical experiments from \cite{berth2021second} indicate that the second Lyapunov exponent is positive for most of the classical continued fraction algorithms if the dimension is beyond a certain threshold. 
In other words, the Pisot condition seems to be violated in these cases, and our results cannot be applied.
For instance, the Brun and Jacobi--Perron algorithms seem to have positive second Lyapunov exponent in dimension $d \ge 10$, contrarily to what was expected e.g.\ in \cite{Lagarias:93,HK00}. 
For the Selmer algorithm, the Pisot condition seems to be violated already in dimension $d \geq 5$.

\subsection{Balanced pair algorithm}\label{subsec:ba}

Before studying individual continued fraction algorithms, we recall an algorithm that can be used to check whether a substitution has purely discrete spectrum. (For each continued fraction algorithm, we have to do this for one substitution associated with a periodic point.) 
The \emph{balanced pair algorithm} was introduced by Livshits \cite{Livshits:87,Livshits:92}  and was inspired by   the notion of coincidence for non-constant length substitutions  such as   considered  for instance in  \cite{Michel:78}; see also \cite[Section~3]{SS:02}, \cite[Section~17]{Barge-Kwapisz:06} or \cite[Section~5.8]{CANTBST}.
This algorithm is usually simpler than checking geometric coincidence.

Let $\sigma$ be a unimodular Pisot substitution.
A~\emph{balanced pair} is a pair $(v_1,v_2) \in \cA^*\times \cA^*$ with $\bl(v_1) = \bl(v_2)$. 
It is called \emph{irreducible} if no proper prefixes of~$v_1$ and~$v_2$ give rise to a balanced pair. 
Each balanced pair can be decomposed into irreducible balanced pairs in an obvious way.
The balanced pair algorithm for a substitution~$\sigma$ on the alphabet $\cA = \{1,\dots,d\}$ starts with $I_0 = \{(ij,ji) \,:\, i,j \in \cA,\, i\ne j\}$. 
Given~$I_k$ for some $k\in \NN$, the set $I_{k+1}$ is defined recursively by the set of all irreducible balanced pairs occurring in a decomposition of a balanced pair $(\sigma(v_1),\sigma(v_2))$ with $(v_1,v_2) \in I_k$. 
We say that the balanced pair algorithm \emph{terminates} if for some $k \in \NN$ the set $I_k \setminus (I_{0} \cup \dots \cup I_{k-1}) = \emptyset$ and if each $(v_1,v_2)\in \bigcup_{j=0}^k I_j$ eventually contains a \emph{coincidence}, i.e., there is a pair of the form $(i,i) \in \cA \times \cA$  that occurs in $(\sigma^j(v_1),\sigma^j(v_2))$ for some $j \in \NN$. 

According to \cite[Theorem~5.8.8]{CANTBST}, the balanced pair algorithm terminates if and only if $\cC_{(\sigma)}$ forms a tiling of~$\bone^\perp$. (More precisely, the theorem states that a certain collection of tiles forms a tiling of~$\bv^\perp$, where $\bv$ is a left eigenvector of~$M_\sigma$, but this is equivalent to $\cC_{(\sigma)}$ being a tiling of~$\bone^\perp$; see the proof of Proposition~\ref{p:gccBST}.)
By Proposition~\ref{p:gccBST} and Lemma~\ref{rem:spectrumEquiv}, this is equivalent to $\sigma$ having purely discrete spectrum.
For a direct proof that a slightly different version of the balanced pair algorithm implies purely discrete spectrum, see \cite[Theorem~3.1]{SS:02}.

\begin{proposition}[{cf.\ \cite[Theorem~5.8.8]{CANTBST}}] \label{p:bpa}
A~unimodular Pisot irreducible substitution $\sigma$ has purely discrete spectrum if and only if the balanced pair algorithm starting with $I_0$ terminates. 
\end{proposition}

\subsection{The Cassaigne--Selmer algorithm} \label{sec:exSelmer}
In 2015, Cassaigne announced a $2$-dimensional continued fraction algorithm that was first studied in \cite{CLL:17}, and in more detail in \cite{CLL:21}. 
This algorithm is called \emph{Cassaigne--Selmer algorithm} because it is measurably conjugate to a semi-sorted version of the $2$-dimensional Selmer algorithm (with the conjugation given by a linear map, see~\cite[Proposition~11.1]{CLL:21}); Selmer's algorithm goes back to~\cite{Selmer:61} (see also \cite[Section~6]{Lagarias:93}) and is conjugate on the absorbing simplex to M\"onkemeyer's algorithm~\cite{Monkemeyer:54} (see \cite{Panti:08}). 
Cassaigne's representation of this algorithm is remarkable  because it admits a set of substitutions that is particularly relevant from a symbolic point of view. 
As shown in \cite{CLL:17}, the $S$-adic dynamical systems defined in terms of these substitutions have factor complexity $2n+1$ (see \eqref{eq:factorcx}) and, as underlined in \cite{BCDLPP:19},  belong to the family of so-called \emph{dendric subshifts}.   Dendric subshifts have   the striking property that  the sets of return words all have the same cardinality for every factor   (they even generate the free group),    which,  among other properties,  provides
 a simple expression for  their dimension group.

Let $\Delta =\{(x_1,x_2,x_3)\in [0,1]^3 : x_1+x_2+x_3=1\}$. 
Using the matrices
\[
C_1 = \begin{pmatrix}1&1&0\\0&0&1\\0&1&0\end{pmatrix} \qquad \mbox{and} \qquad C_2 = \begin{pmatrix}0&1&0\\1&0&0\\0&1&1\end{pmatrix},
\]
we define the matrix valued function
\[
A_\sC:\, \Delta \to \mathrm{GL}(3,\ZZ), \quad \bx \mapsto \begin{cases}\tr{C}_1 & \mbox{if}\ \bx \in \Delta_1 := \{(x_1,x_2,x_3) \in \Delta:\, x_1\ge x_3\}, \\ \tr{C}_2 & \mbox{if}\ \bx \in \Delta_2: = \{(x_1,x_2,x_3) \in \Delta:\, x_1<x_3\}.\end{cases}
\]
Then $T_\sC$ is given by 
\[
T_\sC:\, \Delta \to \Delta, \quad (x_1,x_2,x_3) \mapsto \begin{cases}(\frac{x_1-x_3}{x_1+x_2}, \frac{x_3}{x_1+x_2}, \frac{x_2}{x_1+x_2}) & \mbox{if}\ x_1 \ge x_3, \\ 
(\frac{x_2}{x_2+x_3}, \frac{x_1}{x_2+x_3}, \frac{x_3-x_1}{x_2+x_3}) & \mbox{if}\ x_1 < x_3,\end{cases}
\]
and $(\Delta,T_\sC,A_\sC)$ is called \emph{Cassaigne--Selmer algorithm}.
In \cite[Proposition~22]{AL18}, it is proved that the density of the absolutely continous invariant probability measure~$\nu_\sC$ of~$T_\sC$ equals $\frac{12}{\pi^2(1-x_1)(1-x_3)}$. 
Following~\cite{CLL:17}, we consider the \emph{Cassaigne--Selmer substitutions} 
\begin{equation}\label{eq:cassaigneSubs}
\gamma_1 : \begin{cases} 1 \mapsto 1  \\ 2 \mapsto 13 \\ 3 \mapsto 2 \end{cases} \qquad\qquad
\gamma_2 : \begin{cases} 1 \mapsto 2  \\ 2 \mapsto 13 \\ 3 \mapsto 3 \end{cases} \end{equation}
The corresponding faithful substitution selection is defined by $\varphi_\sC(\bx) = \gamma_j$ if $\bx \in \Delta_j$.
By Definition~\ref{d:realization}, the map
\begin{equation}\label{eq:subrelSel}
\bphi_\sC:\, \Delta \to \{\gamma_1,\gamma_2\}^{\NN}, \quad \bx \mapsto (\varphi_\sC(T^n\bx))_{n\in\NN},
\end{equation}
is a faithful substitutive realization of $(\Delta, T_\sC, A_\sC)$. 
We have $T_\sC(\Delta_1) = T_\sC(\Delta_2) = \Delta$, thus the algorithm satisfies the finite range property and each $\bx \in \Delta$ has  positive range (in the sense of Definition~\ref{def:pr}). 
Moreover, $\bphi_\sC(\Delta) = \{\gamma_1,\gamma_2\}^\NN$ (up to a set of measure zero).
According to \cite[Section~6]{Lagarias:93} and \cite[Chapter~7]{SCHWEIGER}, $T_\sC$~is $\nu_\sC$-almost everywhere weakly convergent, $\{\Delta_1,\Delta_2\}$ is a generating (Markov) partition  for~$T_\sC$, and, hence, one has 
\[
(\Delta,T_\sC,\nu_\sC) \overset{\bphi_\sC}{\cong} (\{\gamma_1,\gamma_2\}^{\NN},\Sigma,\nu_\sC\circ\bphi_\sC^{-1}).
\]
The linear cocycle~$A_\sC$ is $\log$-integrable since  the Cassaigne--Selmer algorithm is additive, with $A_\sC$ taking only $2$ values. 
By \cite[Theorem 1]{Schweiger:04}  and \cite[Theorem 5.1]{berth2021second} (see also \cite[Section 6]{Lagarias:93}), we know that $(\Delta,T_\sC,A_\sC,\nu_\sC)$ satisfies the Pisot condition. 
Moreover, since $\nu_\sC$ is a Borel probability measure which is equivalent to the Lebesgue measure and $T_\sC$ maps open sets to open sets, we have $\nu_\sC \circ T_\sC \ll \nu_\sC$. 

To apply Theorem~\ref{theo:MCF}, we have to find a periodic Pisot point $\bx \in \Delta$ (see Definition~\ref{d:Pisotpoint}) such that $\bphi_\sC(\bx)$ has purely discrete spectrum. To this end, consider 
\begin{equation}\label{eq:stau}
\tau = \gamma_1 \circ \gamma_2:\, \begin{cases} 1 \mapsto 13  \\ 2 \mapsto 12 \\ 3 \mapsto 2 \end{cases}
\end{equation}
and let $\bx \in \Delta$ be the dominant right eigenvector of~$M_\tau$. 
Then we have $\bphi_\sC(\bx) = (\gamma_1,\gamma_2)^\infty$. 
Since $M_\tau$ is a (unimodular) Pisot matrix, we conclude that $\bx$ is a periodic Pisot point, which has positive range by the above considerations.
It only remains to prove that the substitutive dynamical system $(X_\tau,\Sigma)$ has purely discrete spectrum.

For the balanced pair algorithm, we start with $(12,21) \xrightarrow{\tau} (1312,1213)$, which splits into the irreducible pairs $(1,1)$, a coincidence, and $(312,213)$. 
Moreover, $(13,31) \xrightarrow{\tau} (132,213)$ does not split and $(23,32) \xrightarrow{\tau} (122,212)$ splits into $(12,21)$ and the coincidence $(2,2)$. 
Thus 
\[
I_1=\{(1,1),(2,2),(12,21),(312,213),(132,213)\}. 
\]
We have to go on with the new pairs $(1,1), (2,2), (312,213), (132,213)$ occurring in~$I_1$. 
While coincidences yield only coincidences again, we get the pairs $(312,213) \xrightarrow{\tau} (21312,12132)$ and $(132,213) \xrightarrow{\tau} (13212,12132)$. Splitting these yields the new pair $(321,213)$. 
Summing up, the set~$I_2$ contains the new pairs $(3,3)$ and $(321,213)$. 
We only have to check the one which is not a coincidence, getting $(321,213) \xrightarrow{\tau} (21213,12132)$. 
This gives (up to switching the order of the pair) no new pairs in~$I_3$. 
Since all occurring pairs eventually end up in coincidences, the balanced pair algorithm terminates for~$\tau$ and, hence, $(X_\tau,\Sigma)$ has purely discrete spectrum by Proposition~\ref{p:bpa}.

Note that $\tau^2$ and thus the periodic directive sequence $(\gamma_1,\gamma_2)^\infty$ is proper. 
Hence, combining Theorem~\ref{theo:MCF} with Theorem~\ref{t:nc}, we obtain the following result.
(Recall that $\bx' = \pi'(\bx)$ for the projection $\pi'$ defined in~(\ref{e:defpi}); the
corresponding projections of the subtiles, $\cR'_{\bsigma}(w)$, $w\in\cA^*$, are defined in \eqref{eq:Rprime}.)

\begin{theorem}\label{theo:c}
Let $(\Delta,T_\sC,A_\sC,\nu_\sC)$ be the Cassaigne--Selmer algorithm, with substitutive realization~$\bphi_\sC$ defined in \eqref{eq:subrelSel}.
Then $(\Delta,T_\sC,\nu_\sC)\overset{\bphi_\sC}{\cong} (\{\gamma_1,\gamma_2\}^\NN,\Sigma,\nu_\sC\circ\bphi_\sC^{-1})$, and for $\nu_\sC$-almost all $\bx \in \Delta$ the following assertions hold. 
\begin{itemize}
\item[(i)] 
The shift $X_{\bphi_\sC(\bx)}$ is a natural coding of the toral translation $R_{\bx'}$ w.r.t.\ the natural partition $\{-\cR'_{\bsigma}(i) \,:\, i \in \cA\}$.
\item[(ii)] 
The $S$-adic dynamical system $(X_{\bphi_\sC(\bx)},\Sigma) \cong (\TT^2,R_{\bx'})$ has purely discrete spectrum.
\item[(iii)] 
The set $-\cR'_{\bsigma}(w)$ is a bounded remainder set for~$R_{\bx'}$ for each $w \in \cA^*$.
\end{itemize}
\end{theorem}

According to \cite[Theorem~B]{CLL:21}, the system $X_{\bphi_\sC(\bx)}$ has factor complexity $2n+1$ provided that $\bphi_\sC(\bx)$ is primitive. 
Thus Theorem~\ref{theo:c} has the following consequence; see Remark~\ref{rem:allRotations} and the fact that $\nu_\sC$ is equivalent to the Lebesgue measure.

\begin{corollary}\label{cor:CS}
For (Lebesgue) almost all $\bt \in \TT^2$, there exists a minimal subshift $X \subset \{1,2,3\}^{\NN}$ with factor complexity $2n+1$ and language balanced for factors such that $(X,\Sigma)$ is a natural coding of the toral translation~$R_{\bt}$.
\end{corollary}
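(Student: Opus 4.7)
The plan is to bundle together Theorem~\ref{theo:c}, the known factor complexity of Cassaigne--Selmer subshifts, and Remark~\ref{rem:allRotations} (which converts statements on the simplex $\pi(\Delta)$ into statements about all of $\TT^2$). First I would apply Theorem~\ref{theo:c} to the Cassaigne--Selmer algorithm $(\Delta,T_\sC,A_\sC,\nu_\sC)$: for $\nu_\sC$-almost every $\bx \in \Delta$ the subshift $X_{\bphi(\bx)} \subset \{1,2,3\}^\NN$ is a minimal natural coding of $R_{\pi(\bx)}$ on $\TT^2$. Since $\nu_\sC$ has density $\frac{12}{\pi^2(1-x_1)(1-x_3)}$, which is bounded below on compact subsets of the interior of~$\Delta$, this set of $\bx$ is also of full Lebesgue measure in $\Delta$, and hence its image under~$\pi$ is of full Lebesgue measure in the triangle $\pi(\Delta) = \{(t_1,t_2)\in [0,1]^2 : t_1+t_2 \le 1\}$.

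Next I would verify the two symbolic properties asserted in the corollary. The factor complexity $2n+1$ is the structural property of Cassaigne--Selmer subshifts established in \cite{CLL:17} (and recalled in the paragraph preceding Section~\ref{sec:exSelmer}); it depends only on the fact that the directive sequence takes values in $\{\gamma_1,\gamma_2\}$, and so it holds for every $X_{\bphi(\bx)}$. Balance on factors is provided by Theorem~\ref{theo:c}(iii): each $-\pi\,\cR_{\bphi(\bx)}(w)$ is a bounded remainder set for $R_{\pi(\bx)}$, and via the argument recorded in the proof of Proposition~\ref{prop:main}(iii) (invoking \cite[Corollary~5.5]{BCDLPP:19}), this is equivalent to $L_{\bphi(\bx)}$ being balanced on all factors. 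The necessary properness input is the observation that $\tau^2 = (\gamma_1\gamma_2)^2$ is left proper (a direct computation shows that $\tau^2(1), \tau^2(2), \tau^2(3)$ all begin with the letter~$1$), so $\bphi(\bx) = (\gamma_1,\gamma_2)^\infty$ is left proper in the required sense.

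Finally I would extend from $\pi(\Delta)$ to $\TT^2$ via Remark~\ref{rem:allRotations}. The map $(t_1,t_2) \mapsto (t_1, t_1+t_2)$ belongs to $\mathrm{GL}(2,\ZZ)$ and carries $\pi(\Delta)$ onto $\{0 \le t_1 \le t_2 \le 1\}$; combining this with the finitely many coordinate permutations covers $\TT^2$ up to a null set. Each such $M \in \mathrm{GL}(2,\ZZ)$ induces a measure-preserving toral automorphism that conjugates $R_{\pi(\bx)}$ to $R_{M\pi(\bx)}$, sending Lebesgue null sets to Lebesgue null sets; the subshift $X_{\bphi(\bx)}$ is unchanged by this conjugation (only the coding partition is transported), so the complexity $2n+1$ and balance on factors survive. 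Assembling the full-measure sets obtained from each of these finitely many conjugacies yields the desired conclusion for almost every $\bt \in \TT^2$.

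The only nontrivial point in this plan is the transfer through the $\mathrm{GL}(2,\ZZ)$-conjugacies, but since these act by measurable isomorphisms of~$\TT^2$ that do not touch the symbolic side, this reduces to checking that the definition of natural coding is preserved under composing the continuous factor map $\chi$ with a toral automorphism and pushing the partition forward; this is routine.
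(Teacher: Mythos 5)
Your proposal is correct and follows essentially the same route as the paper, which obtains the corollary directly from Theorem~\ref{theo:c} together with the complexity result of \cite{CLL:17} and the $\mathrm{GL}(2,\ZZ)$-conjugation argument of Remark~\ref{rem:allRotations}; your additional details (equivalence of $\nu_\sC$ with Lebesgue measure, left properness of $(\gamma_1\gamma_2)^2$ feeding into \cite[Corollary~5.5]{BCDLPP:19} for balance on factors, and the transport of the coding partition under a toral automorphism) simply make explicit what the paper leaves implicit. The only nitpick is that bounded remainder property of the sets $\pi(-\cR_{\bsigma}(w))$ is a consequence of, not literally equivalent to, balancedness on factors, but since you derive the latter from the properness argument this does not affect the proof.
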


This result is optimal in the sense that, according to \cite{BB:13}, we cannot reach a smaller factor complexity for a natural coding of a two-dimensional translation.
The asserted balance for words means that all $\cF_{i_0} \cap R_{\bt}^{-1} \cF_{i_1} \cap \cdots \cap R_{\bt}^{-n} \cF_{i_n}$ are bounded remainder sets of~$R_{\bt}$, with the notation of Theorem~\ref{t:nc}.
We mention that  the dimension group of~$X$ can be completely described: 
It is isomorphic to $(\ZZ^3, \{\bx \in \ZZ^3: \langle \bx, \bu\rangle > 0\} \cup \{ \mathbf{0}\}, \bone)$, where $\bu$ stands for the associated generalized right eigenvector which is normalized by $\|\bu\|_1=1$; see \cite{BCDLPP:19} for more on this topic. 
All this extends many properties of Sturmian sequences to sequences on $3$-letter alphabets.
 
The Selmer algorithm also exists in higher dimensions; see e.g.~\cite{BFK:15,BFK:19}. However, to be able to extend the previous results to higher dimensions, two problems  occur: firstly, one has to find a suitable substitutive realization leading to $S$-adic dynamical systems of factor complexity $(d{-}1)n+1$; secondly, as mentioned above, the second Lyapunov exponent seems to be negative only for $d \le 4$ \cite{berth2021second}.

\subsection{The Arnoux--Rauzy algorithm}\label{subsec:AR}
In this section, we apply our results to the Arnoux--Rauzy algorithm in arbitrary dimension $d \ge 3$. Like the Cassaigne--Selmer algorithm (with $d=3$), the Arnoux--Rauzy algorithm generates symbolic dynamical systems that have factor complexity $(d{-}1)n+1$ and belong to the family of dendric subshifts. 

Define the set of \emph{Arnoux--Rauzy substitutions} over the alphabet $\cA = \{1,\dots,d\}$ by
\begin{equation}\label{eq:AR}
\alpha_i:\ i \mapsto i,\ j \mapsto ij\ \mbox{for}\ j \in \cA \setminus \{i\}\qquad (i\in\cA).
\end{equation}
Let 
\[
\Delta_i = \bigg\{(x_1,\dots,x_d) \in [0,1]^d \,:\, x_i \ge \sum_{j\ne i} x_j,\, \sum_{i=1}^d x_i = 1\bigg\}.
\]
Using the transposed incidence matrices of~$\alpha_i$, we define the matrix valued function
\[
A_\sAR:\, \bigcup_{i\in\cA} \Delta_i \to \mathrm{GL}(d,\ZZ), \quad \bx \mapsto \tr{\!M}_{\alpha_i}\quad \mbox{if}\ \bx \in \Delta_i,
\]
which gives that
\[
T_\sAR(x_1,\dots,x_d) = \bigg(\frac{x_1}{x_i},\dots, \frac{x_{i-1}}{x_i}, \frac{x_i-\sum_{j\ne i}x_j}{x_i}, \frac{x_{i+1}}{x_i},\dots, \frac{x_d}{x_i}\bigg) \quad \mbox{if}\ \bx \in \Delta_i.
\]
We have $T_\sAR(\Delta_i) = \{\bx \in [0,1]^d : \|\bx\|_1 = 1\} $ for all $i \in \cA$, thus the image of~$T_\sAR$ need not be contained in $\bigcup_{i\in\cA} \Delta_i$. 
For this reason, we have to restrict the domain of~$T_\sAR$ to the $d$-dimensional \emph{Rauzy simplex}, which is defined by
\[
\Delta_\sAR = \bigg\{\bx \in [0,1]^d \,:\; \|\bx\|_1 = 1\ \mbox{and}\ T_\sAR^n(\bx) \in \bigcup_{i\in\cA} \Delta_i\ \mbox{for all}\ n \in\NN\bigg\}.
\]
The Rauzy simplex is defined in a way that $T_\sAR(\Delta_\sAR) = \Delta_\sAR$. 
The algorithm $(\Delta_\sAR,T_\sAR,A_\sAR)$ is called \emph{Arnoux--Rauzy algorithm} and goes back to \cite{Arnoux-Rauzy:91}.
The Rauzy simplex has zero Lebesgue measure by \cite[Section~7]{Arnoux-Starosta:13}.
We consider $T_\sAR$-invariant probability measures~$\nu$ of $(\Delta_\sAR, T_\sAR)$  satisfying $\nu \circ T \ll \nu$. (The latter condition is satisfied for instance for Borel probability measures~$\nu$ w.r.t.\ the subspace topology on $\Delta_\sAR$; see e.g.\ \cite{AvilaHubSkripbis}.)  
The map~$\varphi_\sAR$ defined by $\varphi_\sAR(\bx) = \alpha_i$ when $\bx \in \Delta_i$ is a faithful substitution selection. 
We have $T_\sAR(\Delta_\sAR \cap \Delta_i) =\Delta_\sAR$, thus the algorithm satisfies the finite range property and each $\bx \in\Delta$ has positive range (in the sense of Definition~\ref{def:pr}). 
The associated substitutive realization
\begin{equation}\label{eq:subrelAR}
\bphi_\sAR:\, \Delta \to \{\alpha_1,\dots,\alpha_d\}^{\NN}, \quad \bx \mapsto (\varphi_\sAR(T^n\bx))_{n\in\NN}
\end{equation}
thus satisfies $\bphi_\sAR(\Delta_\sAR) = \{\alpha_1,\dots,\alpha_d\}^\NN$ (up to a set of measure zero). 

By \cite{AD13}, we know that the second Lyapunov exponent of the fully subtractive algorithm is negative. 
Here, we show the same for the Arnoux--Rauzy algorithm, which is closely related to the fully subtractive algorithm. 

\begin{proposition}
Let $(\Delta_\sAR,T_\sAR,A_\sAR,\nu)$ be the Arnoux--Rauzy algorithm for $d \ge 2$, where $\nu$ is an ergodic invariant probability measure with support~$\Delta_\sAR$. 
Then the Lyapunov exponents satisfy $\vartheta_1(A_\sAR) > 0 > \vartheta_2(A_\sAR)$.
\end{proposition}

\begin{proof}
Since the matrices are unimodular, it suffices to show that $\vartheta_2(A_\sAR) < 0$.
We show that the restriction of $\|A_\sAR^{(n)}(\bx)\|$ to $\bx^\perp$ is exponentially shrinking for a.e.\ $\bx \in \Delta_\sAR$.
Indeed, define a sequence $(\tilde{M}_n(\bx))$ of $\RR^{d\times d}$-matrices as in \cite{Delecroix-Hejda-Steiner}, i.e., if $A_\sAR(T_\sAR^n(\bx)) = \tr{\!M}_{\alpha_i}$, then $\tr{\!\tilde{M}}_n(\bx)$ is given by subtracting $A_\sAR^{(n)}(\bx)\, \bone / \|A_\sAR^{(n)}(\bx)\, \bone\|_\infty$ from the $i$-th column of $\tr{\!M}_{\alpha_i}$.
Then
\[
\tilde{M}_0(\bx) \cdots \tilde{M}_{n-1}(\bx)\, \by = \tr{\!A}_\sAR^{(n)}(\bx)\, \by \quad \mbox{for all}\ \by \in \tr{\!A}_\sAR^{(n)}(\bx)^{-1}\, \bone^\perp,
\]
$\|\tilde{M}_n(\bx)\|_\infty \le 1$ for all $\bx \in \Delta_\sAR$, $n \in \NN$, and
\begin{equation} \label{e:tildeM}
\|\tilde{M}_k(\bx) \cdots \tilde{M}_{\ell-1}(\bx)\|_\infty < \frac{2^h-d}{2^h-1}
\end{equation}
if all matrices $\tilde{M}_k(\bx) \cdots \tilde{M}_{\ell-1}(\bx)$ and $\tilde{M}_{n-h+1}(\bx) \cdots \tilde{M}_n(\bx)$,  with $k \le n < \ell$, are primitive; see \cite[Lemma~6]{Delecroix-Hejda-Steiner}.
For $\bv \in \bx^\perp$ and $i \in \cA$, we have
\begin{align*}
\langle \be_i, A_\sAR^{(n)}(\bx)\, \bv \rangle & = \langle \tr{\!A}_\sAR^{(n)}(\bx)\, \be_i, \bv \rangle = \langle \pi_\bx \tr{\!A}_\sAR^{(n)}(\bx)\, \be_i, \bv \rangle = \langle \tr{\!A}_\sAR^{(n)}(\bx)\, \pi^{(n)}_\bx \be_i, \bv \rangle \\
& = \langle \tilde{M}_0(\bx) \cdots \tilde{M}_{n-1}(\bx)\, \pi^{(n)}_\bx \be_i, \bv \rangle, 
\end{align*}
where $\pi^{(n)}_\bx$ denotes the projection along $T_\sAR^n(\bx)$ on $\tr{\!A}_\sAR^{(n)}(\bx)^{-1}\, \bone^\perp$.

Let $\tilde{T}$ be the induced map of $T_\sAR$ on the cylinder $\tilde{\Delta} = \bphi_\sAR([\alpha_2,\dots,\alpha_d,\alpha_1,\alpha_2,\dots,\alpha_d])$, and $\tilde{A}$ the induced cocyle  (so that we can apply \eqref{e:tildeM} with $h=d=\ell-k$).
Then there exists $c > 0$ such that $\|\pi_\bx^{(n)} \be_i\|_\infty \le c$ for all $\bx \in \Delta_\sAR$ with $T_\sAR^n(\bx) \in \tilde{\Delta}$, thus 
\[
\|A_\sAR^{(n)}(\bx)\, \bv\|_\infty \le c\, d\, \|\tilde{M}_0(\bx) \cdots \tilde{M}_{n-1}(\bx)\|_\infty \|\bv\|_\infty.
\]
We have thus 
\[
\|\tilde{A}^{(n)}(\bx)\, \bv\|_\infty \le c\, d\, \bigg(\frac{2^d-d}{2^d-1}\bigg)^n \|\bv\|_\infty \quad \mbox{for all}\ \bx \in \tilde{\Delta},\, \bv \in \bx^\perp. 
\]
which implies that the second Lyapunov exponent of $\tilde{A}$ and thus of $A$ is negative; see e.g.  \cite[Section 4.4.1]{Viana}.
\end{proof}

By induction on~$d$, we can show that 
\[
\alpha_1 \circ \alpha_2 \circ \cdots \circ \alpha_d = \tilde{\alpha}^d, \quad \mbox{with} \quad \tilde{\alpha}(i) = 1(i\!+\!1)\ \mbox{for}\ 1 \le i < d,\ \tilde{\alpha}(d) = 1.
\]
The substitution $\tilde{\alpha}$ is the $d$-bonacci substitution; the characteristic polynomial of the incidence matrix $M_{\tilde{\alpha}}$ of~$\tilde{\alpha}$ is $x^d-x^{d-1}-\cdots-x-1$. 
Thus the dominant right eigenvector $\bx \in \Delta_\sAR$ of $M_{\tilde{\alpha}}$ is a periodic Pisot point. 
It has, like all points of $\Delta_\sAR$, positive range. 
It is well known that $\tilde{\alpha}$ has purely discrete spectrum; see e.g.\ \cite[Theorem~1.2 and Example~3.1]{Ito-Rao:06}, which is based on \cite{Frougny-Solomyak:92}, or \cite[Corollary~4.3]{Barge:16}. (It is also not difficult to show that the balanced pair algorithm terminates for~$\tilde{\alpha}$.) Moreover, $\tilde{\alpha}$~is clearly left proper. 
Thus, combining again Theorem~\ref{theo:MCF} with Theorem~\ref{t:nc} and using the results on factor complexity from \cite[p.~209]{Arnoux-Rauzy:91} and \cite[Theorem~III.8]{RZ:00}, we obtain the following result (parts of which were proved for $d=3$ in \cite{BST:19}). 

\begin{theorem}\label{theo:ar}
Let $(\Delta_\sAR,T_\sAR,A_\sAR,\nu)$ be the Arnoux--Rauzy algorithm for $d \ge 2$, where $\nu$ is an ergodic invariant probability measure with support~$\Delta_\sAR$, and let $\bphi_\sAR$ be as in \eqref{eq:subrelAR}.
Then we have $(\Delta_\sAR,T_\sAR,\nu)\overset{\bphi_\sAR}{\cong} (\{\alpha_1,\dots,\alpha_d\}^\NN,\Sigma,\nu\circ\bphi_\sAR^{-1})$, and for $\nu$-almost all $\bx\in \Delta_\sAR$ the following assertions hold. 
\begin{itemize}
\item[(i)] 
The shift~$X_{\bphi_\sAR(\bx)}$ is a natural coding of the toral translation~$R_{\bx'}$ w.r.t.\ the natural partition $\{-\cR'_{\bsigma}(i) \,:\, i \in \cA\}$.
\item[(ii)] 
The $S$-adic dynamical system $(X_{\bphi_\sAR(\bx)},\Sigma) \cong (\TT^{d-1},R_{\bx'})$ has purely discrete spectrum.
\item[(iii)] 
The set $- \cR'_{\bsigma}(w)$ is a bounded remainder set for~$R_{\bx'}$ for each $w \in \cA^*$.
\item[(iv)] 
The shift $X_{\bphi_\sAR(\bx)}$ has factor complexity  $(d-1)n+1$ and is balanced for words. 
\end{itemize}
\end{theorem}

Note that  Arnoux--Rauzy shifts are also dendric and their dimension group has a similar description as the one given in the previous section for the Cassaigne--Selmer shifts; see \cite{BCDLPP:19}.

\subsection{The Jacobi--Perron algorithm}\label{subsec:JP}
One of the most famous multidimensional continued fraction algorithms is the Jacobi--Perron algorithm; see e.g. \cite[Section~2]{Lagarias:93} or ~\cite[Chapter~4]{SCHWEIGER}. 
We want to apply our theory to the case $d = 3$. 
In this case, the Jacobi--Perron algorithm is defined on the set $\Delta  = \{(x_1,x_2,x_3) \in \RR_+^3 \,:\, x_1+x_2+x_3 = 1,\, x_1 \le x_3,\, x_2 \le x_3\}$.
Let $\cL = \{(a,b) \in \NN^2 \,:\, 0 \le a \le b \neq 0\}$, and for $(a,b) \in \cL$ define the matrices 
\[
J_{a,b}= \begin{pmatrix}0&1&0\\0&0&1 \\1&a&b\end{pmatrix}
\]
and the sets $\Delta_{a,b} = \{(x_1,x_2,x_3) \in \Delta \,:\, a x_1 \le x_2 < (a+1) x_1\ \mbox{and}\ b x_1 \le x_3 < (b+1)x_1\}$.
Then $\mathcal{U}_{\sJP} = \{\Delta_{a,b} \,:\, (a,b) \in \cL\}$ forms a partition of~$\Delta$. We can thus define the matrix valued function
\[
A_{\sJP} :\, \Delta \to \mathrm{GL}(3,\ZZ), \quad \bx \mapsto J_{a,b} \quad \mbox{if}\ \bx \in \Delta_{a,b}.
\]
This function is used to define the piecewise linear function~$T_\sJP$ according to \eqref{eq:Tdef}, which yields
\[
T_\sJP(x_1,x_2,x_3) = \bigg(\frac{x_2-ax_1}{1-(a+b)x_1}, \frac{x_3-bx_1}{1-(a+b)x_1}, \frac{x_1}{1-(a+b)x_1}\bigg) \quad \mbox{if}\ \bx \in \Delta(a,b).
\]
The  algorithm $(\Delta,T_{\sJP},A_{\sJP})$ is called ($2$-dimensional) \emph{Jacobi--Perron algorithm} and goes back to Jacobi's posthumously published work~\cite{Heine1868}. 
Note that, contrary to the Cassaigne--Selmer algorithm, this algorithm is multiplicative (its linear cocycle~$A_{\sJP}$  produces infinitely many different matrices). 
It is known from \cite{SchweigerJP:90} that the invariant measure $\nu_{\sJP}$ of $T_{\sJP}$ is equivalent to the Lebesgue measure on~$\Delta$ and, hence, has full support and 
satisfies $\nu_{\sJP}\circ T\ll \nu_{\sJP}$.
However, there is no known simple expression for the density of~$\nu_{\sJP}$; for more on this subject, see \cite{Broise:96}. 
A~cylinder
\begin{align*}
\Delta_{(a_0,b_0),\dots, (a_{n-1},b_{n-1})} 
& = \big\{\bx\in \Delta \,:\, (A_\sJP(T^0\bx),\dots,A_\sJP(T^{n-1}\bx)) = (J_{a_0,b_0},\dots,J_{a_{n-1},b_{n-1}})\big\} \\
& = \bigcap_{k=0}^{n-1} T_\sJP^{-k}(\Delta_{a_k,b_k})
\end{align*}
is nonempty if and only if the pairs $(a_0,b_0),\dots,(a_{n-1},b_{n-1}) \in \cL$ satisfy the \emph{admissibility condition} 
\begin{equation}\label{eq:admJP}
0 \leq a_k \leq b_k \neq 0, \quad \mbox{and if $a_k = b_k$ then $a_{k+1}=0$}
\end{equation}
for all $0 \le k < n$; see \cite[Section~4.1]{SCHWEIGER}.
This implies that the Jacobi--Perron algorithm satisfies the finite range property. 
In other words, this admissibility condition is a sofic condition that can be recognized by a finite graph.
It is proved in \cite[p.~322]{Lagarias:93} that the cocycle~$A_\sJP$ is $\log$-integrable (which is nontrivial in this case because~$A_\sJP$ has infinite range). 
Thus, because $\nu_\sJP$ has full support, each $\bx \in \Delta$ has positive range. 
The fact that $A_\sJP$ satisfies the Pisot condition is proved in \cite[Chapter~16]{SCHWEIGER}.
Following~\cite{Berthe:16}, we define the \emph{Jacobi--Perron substitutions} 
\begin{equation}\label{eq:JPSubs}
\iota_{a,b} : \begin{cases} 1 \mapsto 2  \\ 2 \mapsto 3 \\ 3 \mapsto 12^a3^b \end{cases} \qquad \big((a,b) \in \cL\big).
\end{equation}
Then $\tr{\!}J_{a,b}$ is the incidence matrix of~$\iota_{a,b}$ for each pair $(a,b) \in \cL$. 
Define the substitution selection~$\varphi_\sJP$ on~$\Delta$ by setting $\varphi_\sJP(\bx) = \iota_{a,b}$ if $\bx \in \Delta_{a,b}$.
The associated faithful substitutive realization~$\bphi_\sJP$ yields 
\[
(\Delta_\sJP,T_\sJP,\nu_\sJP) \overset{\bphi_\sJP}{\cong} (D_\sJP,\Sigma,\nu_\sJP\circ\bphi_\sJP^{-1}),
\]
where $D_\sJP$ is the set of all directive sequences $(\iota_{a_k,b_k})$ satisfying the admissibility condition~\eqref{eq:admJP} for all $k \in \NN$.
This isomorphy is due to the fact that the set $\{\Delta_{a,b} : (a,b)\in  \cL\}$ is a generating (Markov) partition for~$T_\sJP$, which yields weak convergence; see \cite[Section~5]{Lagarias:93}.

To apply Theorem~\ref{theo:MCF}, it remains to establish  that there exists a periodic Pisot point $\bx \in \Delta$ for which $\bphi_\sJP(\bx)$ has purely discrete spectrum.
This assertion is easily checked.
Indeed, $\iota_{0,1}$ is a unimodular Pisot substitution (see also \cite{DFP:04} for relations between the Jacobi--Perron algorithm and Pisot numbers) and $(\iota_{0,1})^\infty \in D_\sJP$ is admissible. 
Moreover, using for instance the balanced pair algorithm (as we did in Section~\ref{sec:exSelmer} for another substitution), one easily checks that $\iota_{0,1}$ has purely discrete spectrum.  This implies that the right eigenvector $\bx \in \Delta$ of the incidence matrix of~$\sigma$ is a periodic Pisot point with $\bphi_\sJP(\bx)$ having purely discrete spectrum.
Thus, all the conditions of Theorem~\ref{theo:MCF} are satisfied and, because of right properness of all directive sequences, we arrive together with Theorem~\ref{t:nc} at the following result. 

\begin{theorem}\label{theo:jp}
Let $(\Delta,T_\sJP,A_\sJP,\nu_\sJP)$ be the $2$-dimensional Jacobi--Perron algorithm, and let $\bphi_\sJP$ be as above.
Then we have $(\Delta,T_\sJP,\nu_\sJP) \overset{\bphi_\sJP}{\cong} (D_\sJP,\Sigma,\nu_\sJP\circ\bphi_\sJP^{-1})$, and for $\nu_\sJP$-a.e.\ $\bx \in \Delta$ the following assertions hold. 
\begin{itemize}
\item[(i)] 
The shift $X_{\bphi_\sJP(\bx)}$ is a natural coding of the toral translation~$R_{\bx'}$ w.r.t.\ the natural partition $\{-\cR'_{\bsigma}(i) \,:\, i \in \cA\}$.
\item[(ii)] 
The $S$-adic dynamical system $(X_{\bphi_\sJP(\bx)},\Sigma) \cong (\TT^2,R_{\bx'})$ has purely discrete spectrum.
\item[(iii)] 
The set $-\cR'_{\bsigma}(w)$ is a bounded remainder set for~$R_{\bx'}$ for each $w \in \cA^*$.
\item[(iv)] 
The shift~$X_{\bphi_\sJP(\bx)}$ is balanced for words.
\end{itemize}
\end{theorem}

\subsection{The Brun algorithm}\label{subsec:Brun}
The case $d=3$ of the Brun algorithm is treated in \cite{BST:19}. 
Here, we consider the unordered version of the Brun algorithm, as defined  in \cite{Delecroix-Hejda-Steiner}, with special emphasis on the case $d=4$. 
We start with the definition of the algorithm for arbitrary $d \ge 3$.  
For this algorithm, we have $\Delta = \{\bx \in [0,1]^d \,:\, \|\bx\|_1 = 1\}$, and the set of \emph{Brun substitutions} over~$\cA$ is defined by
\begin{equation}\label{eq:subsBrun}
\beta_{i,j}:\, j \mapsto ij,\ k\mapsto k\ \mbox{for}\ k \in \cA \setminus \{j\}.
\end{equation}
(We emphasize that in \cite{Berthe-Fernique:11} the authors deal with other substitutions related to this algorithm.)
Let 
\[
\Delta_{i,j} = \big\{(x_1,\dots,x_d) \in \Delta \,:\, x_i \ge x_j \ge x_k\ \mbox{for all}\ k \in \cA \setminus \{i,j\}\big\}.
\]
Using the transposed incidence matrices of~$\beta_{i,j}$, we define the matrix valued function
\[
A_\sB:\, \Delta \to \mathrm{GL}(d,\ZZ), \quad \bx \mapsto \tr{\!M}_{\beta_{i,j}}\quad \mbox{if}\ \bx \in \Delta_{i,j},
\]
which yields
\[
T_\sB(x_1,\dots,x_d) = \bigg(\frac{x_1}{1-x_j}, \dots, \frac{x_{i-1}}{1-x_j}, \frac{x_i-x_j}{1-x_j}, \frac{x_{i+1}}{1-x_j} , \dots, \frac{x_d}{1-x_j}\bigg) \quad \mbox{if}\ \bx \in \Delta_{i,j}.
\]
The algorithm $(\Delta,T_\sB,A_\sB)$ is called (unordered) \emph{Brun algorithm}. 
It goes back to~\cite{Brun19,Brun20,BRUN}. 
The faithful substitution selection corresponding to the substitutions in \eqref{eq:subsBrun} is defined by $\varphi_\sB(\bx) = \beta_{i,j}$ if $\bx \in \Delta_{i,j}$.
As indicated in \cite{Delecroix-Hejda-Steiner}, the directive sequences $\bsigma = (\sigma_n)$ that are generated by this algorithm are characterized by the \emph{admissibility condition}
\begin{equation} \label{eq:Brun:admis}
\begin{split}
(\sigma_n,\sigma_{n+1}) \in \big\{(\beta_{i,j},\beta_{i,j}) \,: & \ i \in \cA,\, j \in \cA \setminus \{i\}\big\} \\ 
\cup\, \big\{(\beta_{i,j},\beta_{j,k}) \,: & \ i \in \cA,\, j \in \cA \setminus \{i\},\, k \in \cA \setminus \{j\}\big\}
\end{split}
\quad \mbox{for all}\ n \in \NN.
\end{equation}
This is again a sofic condition that can be recognized by a finite graph. 
For the faithful substitutive realization~$\bphi_\sB$ associated with~$\varphi_\sB$, we have thus $\bphi_\sB(\Delta) = D_\sB$ for a sofic shift~$D_\sB$, and the algorithm satisfies the finite range property. 
The Brun algorithm has an ergodic  invariant  probability measure~$\nu_\sB$ that is equivalent to the Lebesgue measure; see e.g.~\cite[Proposition~28]{AL18}.
Then each $\bx\in\Delta$ has positive range. 
Moreover, as $T_\sB$ maps open sets to open sets, we have $\nu_\sB\circ T\ll\nu_\sB$. 

We now confine ourselves to the case $d=4$. 
The linear cocycle~$A_\sB$ is $\log$-integrable since $A_\sB$ takes only $12$ values. 
By Schratzberger \cite{Schratzberger:01}, we know that $(\Delta,T_\sB,A_\sB,\nu_\sB)$ satisfies the Pisot condition; see also \cite{HK00,Hardcastle:02}, where an acceleration of Brun's algorithm is considered. This implies that $\{\Delta_{i,j} \,:\, i\neq j\}$ is a generating  partition for~$T_\sB$ and that $T_\sB$ is weakly convergent, hence,
$(\Delta,T_\sB,\nu_\sB) \overset{\bphi_\sB}{\cong} (D_\sB,\Sigma,\nu_\sB\circ\bphi_\sB^{-1})$.

To apply Theorem~\ref{theo:MCF}, we have to find a periodic Pisot point $\bx \in \Delta$ such that $\bphi_\sB(\bx)$ has purely discrete spectrum. To this end, consider 
\begin{equation*}
\tau = \beta_{12}\circ\beta_{23}\circ\beta_{34}\circ\beta_{41}:\, \begin{cases}1 \mapsto 12341 \\ 2 \mapsto 12 \\ 3 \mapsto 123 \\ 4 \mapsto 1234\end{cases}
\end{equation*}
and let $\bx \in \Delta$ be the dominant right eigenvector of~$M_\tau$. 
Then $\bphi_\sB(\bx) = (\beta_{12},\beta_{23},\beta_{34},\beta_{41})^\infty \in D_\sB$ is an admissible sequence. 
Since $M_\tau$ is a Pisot matrix, we conclude that $\bx$ is a periodic Pisot point, which  has positive range by the above considerations. 
Again using the balanced pair algorithm, one can show that $\tau$ has purely discrete spectrum.
Since $\tau$ is left proper, $\bphi_\sB(\bx)$ is also left proper for $\nu_\sB$-a.e.\ $\bx \in \Delta$. 
Combining Theorem~\ref{theo:MCF} with Theorem~\ref{t:nc}, we thus obtain the following result. 

\begin{theorem}\label{theo:b}
Let $(\Delta,T_\sB,A_\sB,\nu_\sB)$ be the Brun algorithm with $d=4$, and let $\bphi_\sB$ be as above. 
Then $(\Delta,T_\sB,\nu_\sB)\overset{\bphi_\sB}{\cong} (D_\sB,\Sigma,\nu_\sB\circ\bphi_\sB^{-1})$, and for $\nu_\sB$-almost all $\bx\in \Delta$, the following assertions hold. 
\begin{itemize}
\item[(i)] 
The shift~$X_{\bphi_\sB(\bx)}$ is a natural coding of the toral translation~$R_{\bx'}$ w.r.t.\ the natural partition $\{-\cR'_{\bsigma}(i)\,:\, i\in\cA\}$.
\item[(ii)] 
The $S$-adic dynamical system $(X_{\bphi_\sB(\bx)},\Sigma) \cong (\TT^3,R_{\bx'})$ has purely discrete spectrum.
\item[(iii)] 
The set $-\cR'_{\bsigma}(w)$ is a bounded remainder set for~$R_{\bx'}$ for each $w \in \cA^*$. 
\item[(iv)] 
The shift~$X_{\bphi_\sB(\bx)}$  is balanced for words.
\end{itemize}
\end{theorem}

Note that  this result gives a natural coding for (Lebesgue) a.a.\ points of~$\TT^3$ in terms of ``Brun $S$-adic sequences'' by Remark~\ref{rem:allRotations}, and by recalling that the ergodic invariant measure~$\nu_\sB$ of the Brun algorithm is equivalent to Lebesgue measure.

\begin{corollary}\label{cor:Br}
For (Lebesgue) almost all $\bt \in \TT^3$, there exists a minimal subshift $X \subset \{1,2,3,4\}^{\NN}$ with language balanced for factors such that $(X,\Sigma)$ is a natural coding of the toral translation~$R_{\bt}$.
\end{corollary}

\section{Acknowledgements}
We are indebted to the two anonymous referees whose comments helped us to improve the exposition of the present paper.

\bibliographystyle{amsalpha}
\bibliography{sadic5bis}

\newcommand{\etalchar}[1]{$^{#1}$}
\providecommand{\bysame}{\leavevmode\hbox to3em{\hrulefill}\thinspace}
\providecommand{\MR}{\relax\ifhmode\unskip\space\fi MR }
\providecommand{\MRhref}[2]{%
  \href{http://www.ams.org/mathscinet-getitem?mr=#1}{#2}
}
\providecommand{\href}[2]{#2}
\begin{thebibliography}{BCBD{\etalchar{+}}21}

\bibitem[ABB{\etalchar{+}}15]{AkiBBLS}
S.~Akiyama, M.~Barge, V.~Berth{\'e}, J.-Y. Lee, and A.~Siegel, \emph{On the
  {P}isot substitution conjecture}, Mathematics of aperiodic order, Progress in
  mathematics, vol. 309, Birkh{\"a}user, Basel, 2015, pp.~33--72.

\bibitem[ABK{\etalchar{+}}21]{ABKST}
P.~Arnoux, V.~Berth{\'e}, D.~H. Kim, W.~Steiner, and J.~M. Thuswaldner,
  \emph{Higher-dimensional three-distance theorems and continued fractions}, in
  preparation, 2021.

\bibitem[ABM{\etalchar{+}}21]{sadic3}
P.~Arnoux, V.~Berth{\'e}, M.~Minervino, W.~Steiner, and J.~M. Thuswaldner,
  \emph{Nonstationary {M}arkov partitions, flows on homogeneous spaces, and
  generalized continued fractions}, in preparation, 2021.

\bibitem[AD19]{AD13}
A.~Avila and V.~Delecroix, \emph{Some monoids of {P}isot matrices}, New trends
  in one-dimensional dynamics, Springer Proc. Math. Stat., vol. 285, Springer,
  Cham, 2019, pp.~21--30.

\bibitem[Ada03]{Adamczewski:03}
B.~Adamczewski, \emph{Balances for fixed points of primitive substitutions},
  Theoret. Comput. Sci. \textbf{307} (2003), no.~1, 47--75.

\bibitem[Ada04a]{Adamczewski:04}
\bysame, \emph{R\'{e}partition des suites {$(n\alpha)_{n\in\Bbb N}$} et
  substitutions}, Acta Arith. \textbf{112} (2004), no.~1, 1--22.

\bibitem[Ada04b]{Adamdis}
\bysame, \emph{Symbolic discrepancy and self-similar dynamics}, Ann. Inst.
  Fourier (Grenoble) \textbf{54} (2004), no.~7, 2201--2234 (2005).

\bibitem[AF01]{Arnoux-Fisher:01}
P.~Arnoux and A.~M. Fisher, \emph{The scenery flow for geometric structures on
  the torus: the linear setting}, Chinese Ann. Math. Ser. B \textbf{22} (2001),
  no.~4, 427--470.

\bibitem[AF05]{Arnoux-Fisher:05}
\bysame, \emph{Anosov families, renormalization and non-stationary subshifts},
  Ergodic Theory Dynam. Systems \textbf{25} (2005), no.~3, 661--709.

\bibitem[AGH00]{AltmanGH:00}
E.~Altman, B.~Gaujal, and A.~Hordijk, \emph{Balanced sequences and optimal
  routing}, J. ACM \textbf{47} (2000), no.~4, 752--775.

\bibitem[AHS16]{AvilaHubSkripbis}
A.~Avila, P.~Hubert, and A.~Skripchenko, \emph{Diffusion for chaotic plane
  sections of $3$-periodic plane surfaces}, Invent. Math. \textbf{206} (2016),
  109--146.

\bibitem[AI01]{Arnoux-Ito:01}
P.~Arnoux and S.~Ito, \emph{Pisot substitutions and {R}auzy fractals}, Bull.
  Belg. Math. Soc. Simon Stevin \textbf{8} (2001), no.~2, 181--207,
  Journ{\'e}es Montoises d'Informatique Th{\'e}orique (Marne-la-Vall{\'e}e,
  2000).

\bibitem[AL11]{AL11}
S.~Akiyama and J.-Y. Lee, \emph{Algorithm for determining pure pointedness of
  self-affine tilings}, Adv. Math. \textbf{226} (2011), no.~4, 2855--2883.

\bibitem[AL18]{AL18}
P.~Arnoux and S.~Labb\'{e}, \emph{On some symmetric multidimensional continued
  fraction algorithms}, Ergodic Theory Dynam. Systems \textbf{38} (2018),
  no.~5, 1601--1626.

\bibitem[AMS14]{Arnoux-Mizutani-Sellami}
P.~Arnoux, M.~Mizutani, and T.~Sellami, \emph{Random product of substitutions
  with the same incidence matrix}, Theoret. Comput. Sci. \textbf{543} (2014),
  68--78.

\bibitem[And21a]{AndrieuPhD}
M.~Andrieu, \emph{Exceptional trajectories in the symbolic dynamics of
  multidimensional continued fraction algorithms}, Ph.D. thesis, Aix-Marseille
  Universit\'e, 2021.

\bibitem[And21b]{Andrieu:20}
M.~Andrieu, \emph{A {R}auzy fractal unbounded in all directions of the plane},
  C. R. Math. Acad. Sci. Paris \textbf{359} (2021), no.~4, 399--407.

\bibitem[AR91]{Arnoux-Rauzy:91}
P.~Arnoux and G.~Rauzy, \emph{Repr\'esentation g\'eom\'etrique de suites de
  complexit\'e {$2n+1$}}, Bull. Soc. Math. France \textbf{119} (1991), no.~2,
  199--215.

\bibitem[Arn98]{Arnold:98}
L.~Arnold, \emph{Random dynamical systems}, Springer Monographs in Mathematics,
  Springer-Verlag, Berlin, 1998.

\bibitem[AS13]{Arnoux-Starosta:13}
P.~Arnoux and \v{S}. Starosta, \emph{The {R}auzy gasket}, Further developments
  in fractals and related fields, Trends Math., Birkh\"{a}user/Springer, New
  York, 2013, pp.~1--23.

\bibitem[BAG01]{BAG:01}
A.~Broise-Alamichel and Y.~Guivarc'h, \emph{Exposants caract\'{e}ristiques de
  l'algorithme de {J}acobi-{P}erron et de la transformation associ\'{e}e}, Ann.
  Inst. Fourier (Grenoble) \textbf{51} (2001), no.~3, 565--686.

\bibitem[Bar16]{Barge:16}
M.~Barge, \emph{Pure discrete spectrum for a class of one-dimensional
  substitution tiling systems}, Discrete Contin. Dyn. Syst. \textbf{36} (2016),
  no.~3, 1159--1173.

\bibitem[Bar18]{Barge15}
\bysame, \emph{The {P}isot conjecture for {$\beta$}-substitutions}, Ergodic
  Theory Dynam. Systems \textbf{38} (2018), 444--472.

\bibitem[BB13]{BB:13}
N.~B\'{e}daride and J.-F. Bertazzon, \emph{Minoration of the complexity
  function associated to a translation on the torus}, Monatsh. Math.
  \textbf{171} (2013), no.~3-4, 291--304.

\bibitem[BBJS15]{BBJS14}
V.~Berth{\'e}, J.~Bourdon, T.~Jolivet, and A.~Siegel, \emph{A combinatorial
  approach to products of {P}isot substitutions}, Ergodic Theory Dynam. Systems
  (2015), 1--38.

\bibitem[BC04]{BraunerCrama:04}
N.~Brauner and Y.~Crama, \emph{The maximum deviation just-in-time scheduling
  problem}, Discrete Appl. Math. \textbf{134} (2004), no.~1-3, 25--50.

\bibitem[BCBD{\etalchar{+}}21]{BCDLPP:19}
V.~Berth\'{e}, P.~Cecchi~Bernales, F.~Durand, J.~Leroy, D.~Perrin, and
  S.~Petite, \emph{On the dimension group of unimodular {$\mathcal{S}$}-adic
  subshifts}, Monatsh. Math. \textbf{194} (2021), no.~4, 687--717.

\bibitem[BD14]{Berthe-Delecroix}
V.~Berth{\'e} and V.~Delecroix, \emph{Beyond substitutive dynamical systems:
  {$S$}-adic expansions}, RIMS Lecture note `K{\^o}ky{\^u}roku Bessatsu'
  \textbf{B46} (2014), 81--123.

\bibitem[Ber71]{Bernstein:71}
L.~Bernstein, \emph{The {J}acobi-{P}erron algorithm---{I}ts theory and
  application}, Lecture Notes in Mathematics, Vol. 207, Springer-Verlag,
  Berlin-New York, 1971.

\bibitem[Ber16]{Berthe:16}
V.~Berth\'{e}, \emph{{$S$}-adic expansions related to continued fractions},
  Natural extension of arithmetic algorithms and {S}-adic system, RIMS
  K\^{o}ky\^{u}roku Bessatsu, B58, Res. Inst. Math. Sci. (RIMS), Kyoto, 2016,
  pp.~61--84.

\bibitem[BF11]{Berthe-Fernique:11}
V.~Berth{\'e} and T.~Fernique, \emph{Brun expansions of stepped surfaces},
  Discrete Math. \textbf{311} (2011), no.~7, 521--543.

\bibitem[BFK15]{BFK:15}
H.~Bruin, R.~Fokkink, and C.~Kraaikamp, \emph{The convergence of the
  generalised {S}elmer algorithm}, Israel J. Math. \textbf{209} (2015), no.~2,
  803--823.

\bibitem[BFK19]{BFK:19}
\bysame, \emph{Erratum to: ``{T}he convergence of the generalised {S}elmer
  algorithm''}, Israel J. Math. \textbf{231} (2019), no.~1, 505.

\bibitem[BG13]{BaakeGrimm}
M.~Baake and U.~Grimm, \emph{Aperiodic order. {V}ol. 1}, Encyclopedia of
  Mathematics and its Applications, vol. 149, Cambridge University Press,
  Cambridge, 2013.

\bibitem[BG20]{baake2019fourier}
\bysame, \emph{Fourier transform of {R}auzy fractals and point spectrum of 1{D}
  {P}isot inflation tilings}, Doc. Math. \textbf{25} (2020), 2303--2337.

\bibitem[BJ08]{BraunerJost:08}
N.~Brauner and V.~Jost, \emph{Small deviations, {JIT} sequencing and symmetric
  case of {F}raenkel's conjecture}, Discrete Math. \textbf{308} (2008), no.~11,
  2319--2324.

\bibitem[BK06]{Barge-Kwapisz:06}
M.~Barge and J.~Kwapisz, \emph{Geometric theory of unimodular {P}isot
  substitutions}, Amer. J. Math. \textbf{128} (2006), no.~5, 1219--1282.

\bibitem[BK18]{BK:18}
V.~Berth\'{e} and D.~H. Kim, \emph{Some constructions for the
  higher-dimensional three-distance theorem}, Acta Arith. \textbf{184} (2018),
  no.~4, 385--411.

\bibitem[Bow78]{Bow78}
R.~Bowen, \emph{Markov partitions are not smooth}, Proc. Amer. Math. Soc.
  \textbf{71} (1978), no.~1, 130--132.

\bibitem[BR10]{CANT}
V.~Berth{\'e} and M.~Rigo (eds.), \emph{Combinatorics, automata and number
  theory}, Encyclopedia of Mathematics and its Applications, vol. 135,
  Cambridge University Press, Cambridge, 2010.

\bibitem[Bre81]{BRENTJES}
A.~J. Brentjes, \emph{Multidimensional continued fraction algorithms},
  Mathematical Centre Tracts, vol. 145, Mathematisch Centrum, Amsterdam, 1981.

\bibitem[Bro96]{Broise:96}
A.~Broise, \emph{Fractions continues multidimensionnelles et lois stables},
  Bull. Soc. Math. France \textbf{124} (1996), no.~1, 97--139.

\bibitem[Bru19]{Brun19}
V.~Brun, \emph{En generalisation av kjedebr{\o}ken {I}}, Skr. Vidensk.-Selsk.
  Christiana Math.-Nat. Kl. (1919), no.~6, 1--29.

\bibitem[Bru20]{Brun20}
\bysame, \emph{En generalisation av kjedebr{\o}ken {II}}, Skr. Vidensk.-Selsk.
  Christiana Math.-Nat. Kl. (1920), no.~6, 1--24.

\bibitem[Bru58]{BRUN}
\bysame, \emph{Algorithmes euclidiens pour trois et quatre nombres},
  Treizi\`eme congr\`es des math\`ematiciens scandinaves, tenu \`a {H}elsinki
  18-23 ao\^ut 1957, Mercators Tryckeri, Helsinki, 1958, pp.~45--64.

\bibitem[BS05]{Berthe-Siegel:05}
V.~Berth{\'e} and A.~Siegel, \emph{Tilings associated with beta-numeration and
  substitutions}, Integers \textbf{5} (2005), no.~3, A2, 46.

\bibitem[BS18]{BufSol18}
A.~I. Bufetov and B.~Solomyak, \emph{On ergodic averages for parabolic product
  flows}, Bull. Soc. Math. France \textbf{146} (2018), no.~4, 675--690.

\bibitem[BST10]{CANTBST}
V.~Berth{\'e}, A.~Siegel, and J.~M. Thuswaldner, \emph{Substitutions, {R}auzy
  fractals, and tilings}, Combinatorics, Automata and Number Theory,
  Encyclopedia of Mathematics and its Applications, vol. 135, Cambridge
  University Press, 2010.

\bibitem[BST19]{BST:19}
V.~Berth\'{e}, W.~Steiner, and J.~M. Thuswaldner, \emph{Geometry, dynamics, and
  arithmetic of {$S$}-adic shifts}, Ann. Inst. Fourier (Grenoble) \textbf{69}
  (2019), no.~3, 1347--1409.

\bibitem[BST21]{berth2021second}
\bysame, \emph{On the second {L}yapunov exponent of some multidimensional
  continued fraction algorithms}, Math. Comp. \textbf{90} (2021), no.~328,
  883--905.

\bibitem[Buf14]{Bufetov:14b}
A.~I. Bufetov, \emph{Finitely-additive measures on the asymptotic foliations of
  a {M}arkov compactum}, Mosc. Math. J. \textbf{14} (2014), 205--224, 426.

\bibitem[CDFG20]{CDFG}
J.~Chaika, D.~Damanik, J.~Fillman, and P.~Gohlke, \emph{Zero measure spectrum
  for multi-frequency {S}chr{\"o}dinger operators}, arXiv:2009.11946, preprint,
  2020.

\bibitem[CFZ00]{Cassaigne-Ferenczi-Zamboni:00}
J.~Cassaigne, S.~Ferenczi, and L.~Q. Zamboni, \emph{Imbalances in
  {A}rnoux-{R}auzy sequences}, Ann. Inst. Fourier (Grenoble) \textbf{50}
  (2000), no.~4, 1265--1276.

\bibitem[Che09]{Chevallier}
N.~Chevallier, \emph{Coding of a translation of the two-dimensional torus},
  Monatsh. Math. \textbf{157} (2009), no.~2, 101--130.

\bibitem[CLL17]{CLL:17}
J.~Cassaigne, S.~Labb\'{e}, and J.~Leroy, \emph{A set of sequences of
  complexity {$2n+1$}}, Combinatorics on words, Lecture Notes in Comput. Sci.,
  vol. 10432, Springer, Cham, 2017, pp.~144--156.

\bibitem[CLL21]{CLL:21}
J.~Cassaigne, S.~Labb{\'e}, and J.~Leroy, \emph{Almost everywhere balanced
  sequences of complexity $2n+1$}, arXiv:2102.10093, preprint, 2021.

\bibitem[CN13]{CN:13}
J.~Chaika and A.~Nogueira, \emph{Classical homogeneous multidimensional
  continued fraction algorithms are ergodic}, arXiv:1302.5008, preprint, 2013.

\bibitem[CS01]{CS:01}
V.~Canterini and A.~Siegel, \emph{Geometric representation of substitutions of
  {P}isot type}, Trans. Amer. Math. Soc. \textbf{353} (2001), no.~12,
  5121--5144.

\bibitem[CS03]{CS:03}
A.~Clark and L.~Sadun, \emph{When size matters: subshifts and their related
  tiling spaces}, Ergodic Theory Dynam. Systems \textbf{23} (2003), no.~4,
  1043--1057.

\bibitem[Dek78]{Dekking:1977}
F.~M. Dekking, \emph{The spectrum of dynamical systems arising from
  substitutions of constant length}, Z. Wahrscheinlichkeitstheorie und Verw.
  Gebiete \textbf{41} (1977/78), no.~3, 221--239.

\bibitem[DFPLR04]{DFP:04}
E.~Dubois, A.~Farhane, and R.~Paysant-Le~Roux, \emph{The {J}acobi-{P}erron
  algorithm and {P}isot numbers}, Acta Arith. \textbf{111} (2004), no.~3,
  269--275.

\bibitem[DHS99]{DHS:99}
F.~Durand, B.~Host, and C.~Skau, \emph{Substitutional dynamical systems,
  {B}ratteli diagrams and dimension groups}, Ergodic Theory Dynam. Systems
  \textbf{19} (1999), no.~4, 953--993.

\bibitem[DHS13]{Delecroix-Hejda-Steiner}
V.~Delecroix, T.~Hejda, and W.~Steiner, \emph{Balancedness of {A}rnoux-{R}auzy
  and {B}run words}, WORDS, Lecture Notes in Computer Science, vol. 8079,
  Springer, 2013, pp.~119--131.

\bibitem[Dur03]{Durand:00b}
F.~Durand, \emph{Corrigendum and addendum to: ``{L}inearly recurrent subshifts
  have a finite number of non-periodic subshift factors'' [{E}rgodic {T}heory
  {D}ynam. {S}ystems {\bf 20} (2000), 1061--1078]}, Ergodic Theory Dynam.
  Systems \textbf{23} (2003), 663--669.

\bibitem[Fer92]{Ferenczi92}
S.~Ferenczi, \emph{Bounded remainder sets}, Acta Arith. \textbf{61} (1992),
  no.~4, 319--326.

\bibitem[FN20]{Fogg:20}
N.~Pytheas Fogg and C.~No{\^u}s, \emph{Symbolic coding of linear complexity for
  generic translations of the torus, using continued fractions},
  arXiv:2005.12229, preprint, 2020.

\bibitem[Fog02]{Fog02}
N.~Pytheas Fogg, \emph{Substitutions in dynamics, arithmetics and
  combinatorics}, Lecture Notes in Mathematics, vol. 1794, Springer-Verlag,
  Berlin, 2002, Edited by V.\ Berth\'e, S.\ Ferenczi, C.\ Mauduit and A.\
  Siegel.

\bibitem[Fou20]{Fougeron:20}
C.~Fougeron, \emph{Dynamical properties of simplicial systems and continued
  fraction algorithms}, arXiv:2001.01367, preprint, 2020.

\bibitem[Fra73]{Fraenkel:73}
A.~S. Fraenkel, \emph{Complementing and exactly covering sequences}, J.
  Combinatorial Theory Ser. A \textbf{14} (1973), 8--20.

\bibitem[FS92]{Frougny-Solomyak:92}
C.~Frougny and B.~Solomyak, \emph{Finite beta-expansions}, Ergodic Theory
  Dynam. Systems \textbf{12} (1992), no.~4, 713--723.

\bibitem[Fur60]{Furstenberg:60}
H.~Furstenberg, \emph{Stationary processes and prediction theory}, Annals of
  Mathematics Studies, No. 44, Princeton University Press, Princeton, N.J.,
  1960.

\bibitem[GL15]{Grepstad-Lev}
S.~Grepstad and N.~Lev, \emph{Sets of bounded discrepancy for multi-dimensional
  irrational rotation}, Geom. Funct. Anal. \textbf{25} (2015), 87--133.

\bibitem[Har02]{Hardcastle:02}
D.~M. Hardcastle, \emph{The three-dimensional {G}auss algorithm is strongly
  convergent almost everywhere}, Experiment. Math. \textbf{11} (2002), no.~1,
  131--141.

\bibitem[HJ68]{Heine1868}
E.~Heine and C.G.J. Jacobi, \emph{{A}llgemeine {T}heorie der
  kettenbruch{\"{a}}hnlichen {A}lgorithmen, in welchen jede {Z}ahl aus drei
  vorhergehenden gebildet wird.}, J. Reine Angew. Math. \textbf{69} (1868),
  29--64.

\bibitem[HK00]{HK00}
D.~M. Hardcastle and K.~Khanin, \emph{On almost everywhere strong convergence
  of multi-dimensional continued fraction algorithms}, Ergodic Theory Dynam.
  Systems \textbf{20} (2000), no.~6, 1711--1733.

\bibitem[HKK17]{HKK:17}
A.~Haynes, M.~Kelly, and H.~Koivusalo, \emph{Constructing bounded remainder
  sets and cut-and-project sets which are bounded distance to lattices, {II}},
  Indag. Math. (N.S.) \textbf{28} (2017), 138--144.

\bibitem[HM20]{HM:20}
A.~Haynes and J.~Marklof, \emph{Higher dimensional {S}teinhaus and {S}later
  problems via homogeneous dynamics}, Ann. Sci. \'{E}c. Norm. Sup\'{e}r. (4)
  \textbf{53} (2020), no.~2, 537--557.

\bibitem[HM21]{HM:20bis}
\bysame, \emph{A five distance theorem for {K}ronecker sequences}, Int. Math.
  Res. Not. IMRN (2021), rnab205, to appear.

\bibitem[HR21]{HR:20}
A.~Haynes and J.~J. Ramirez, \emph{Higher-dimensional gap theorems for the
  maximum metric}, Int. J. Number Theory \textbf{17} (2021), no.~7, 1665--1670.

\bibitem[HS03]{HolSol:03}
M.~Hollander and B.~Solomyak, \emph{Two-symbol {P}isot substitutions have pure
  discrete spectrum}, Ergodic Theory Dynam. Systems \textbf{23} (2003), no.~2,
  533--540.

\bibitem[IR06]{Ito-Rao:06}
S.~Ito and H.~Rao, \emph{Atomic surfaces, tilings and coincidence. {I}.
  {I}rreducible case}, Israel J. Math. \textbf{153} (2006), 129--155.

\bibitem[IY87]{ItoYuri:87}
S.~Ito and M.~Yuri, \emph{Number theoretical transformations with finite range
  structure and their ergodic properties}, Tokyo J. Math. \textbf{10} (1987),
  no.~1, 1--32.

\bibitem[KLDM06]{KLM:07}
K.~Khanin, J.~Lopes~Dias, and J.~Marklof, \emph{Renormalization of
  multidimensional {H}amiltonian flows}, Nonlinearity \textbf{19} (2006),
  no.~12, 2727--2753.

\bibitem[Lag93]{Lagarias:93}
J.~C. Lagarias, \emph{The quality of the {D}iophantine approximations found by
  the {J}acobi-{P}erron algorithm and related algorithms}, Monatsh. Math.
  \textbf{115} (1993), no.~4, 299--328.

\bibitem[Lag94]{Lagarias:94}
\bysame, \emph{Geodesic multidimensional continued fractions}, Proc. London
  Math. Soc. (3) \textbf{69} (1994), no.~3, 464--488.

\bibitem[Lia87]{Liardet:87}
P.~Liardet, \emph{Regularities of distribution}, Compositio Math. \textbf{61}
  (1987), no.~3, 267--293.

\bibitem[Liv87]{Livshits:87}
A.~N. Livshits, \emph{On the spectra of adic transformations of {M}arkov
  compact sets}, Uspekhi Mat. Nauk \textbf{42} (1987), no.~3(255), 189--190.

\bibitem[Liv92]{Livshits:92}
\bysame, \emph{Some examples of adic transformations and automorphisms of
  substitutions}, vol.~11, 1992, Selected translations, pp.~83--104.

\bibitem[MA20]{mercat2018characterization}
P.~Mercat and S.~Akiyama, \emph{Yet another characterization of the {P}isot
  substitution conjecture}, Substitution and tiling dynamics: introduction to
  self-inducing structures, Lecture Notes in Math., vol. 2273, Springer, Cham,
  2020, pp.~397--448.

\bibitem[Mei73]{Meijer:73}
H.~G. Meijer, \emph{On a distribution problem in finite sets}, Nederl. Akad.
  Wetensch. Proc. Ser. A {\bf 76}=Indag. Math. \textbf{35} (1973), 9--17.

\bibitem[MH40]{MorseHedlund:40}
M.~Morse and G.~A. Hedlund, \emph{Symbolic dynamics {II}. {S}turmian
  trajectories}, Amer. J. Math. \textbf{62} (1940), 1--42.

\bibitem[Mic78]{Michel:78}
P.~Michel, \emph{Coincidence values and spectra of substitutions}, Z.
  Wahrscheinlichkeitstheorie und Verw. Gebiete \textbf{42} (1978), no.~3,
  205--227.

\bibitem[M{\"{o}}n54]{Monkemeyer:54}
R.~M{\"{o}}nkemeyer, \emph{\"{U}ber {F}areynetze in {$n$} {D}imensionen}, Math.
  Nachr. \textbf{11} (1954), 321--344.

\bibitem[Pan08]{Panti:08}
G.~Panti, \emph{Multidimensional continued fractions and a {M}inkowski
  function}, Monatsh. Math. \textbf{154} (2008), no.~3, 247--264.

\bibitem[Per07]{Perron:07}
O.~Perron, \emph{Grundlagen f\"ur eine {T}heorie des {J}acobischen
  {K}ettenbruchalgorithmus}, Math. Ann. \textbf{64} (1907), no.~1, 1--76.

\bibitem[Que10]{Queffelec:10}
M.~Queff{\'e}lec, \emph{Substitution dynamical systems---spectral analysis},
  second ed., Lecture Notes in Mathematics, vol. 1294, Springer-Verlag, Berlin,
  2010.

\bibitem[Rau82]{Rauzy:82}
G.~Rauzy, \emph{Nombres alg\'ebriques et substitutions}, Bull. Soc. Math.
  France \textbf{110} (1982), no.~2, 147--178.

\bibitem[Rau84]{Rauzy:84}
\bysame, \emph{Ensembles \`a restes born\'es}, Seminar on number theory,
  1983--1984 ({T}alence, 1983/1984), Univ. Bordeaux I, Talence, 1984, pp.~Exp.
  No. 24, 12.

\bibitem[RK01]{Rosenfeld}
A.~Rosenfeld and R.~Klette, \emph{Digital straightness}, Electronic Notes in
  Theoretical Computer Science \textbf{46} (2001), 1 -- 32, IWCIA 2001, 8th
  International Workshop on Combinatorial Image Analysis.

\bibitem[RZ00]{RZ:00}
R.~N. Risley and L.~Q. Zamboni, \emph{A generalization of {S}turmian sequences:
  combinatorial structure and transcendence}, Acta Arith. \textbf{95} (2000),
  no.~2, 167--184.

\bibitem[Sch73]{Schweiger:73}
F.~Schweiger, \emph{The metrical theory of {J}acobi-{P}erron algorithm},
  Lecture Notes in Mathematics, Vol. 334, Springer-Verlag, Berlin-New York,
  1973.

\bibitem[Sch74]{Schmidt:74}
W.~M. Schmidt, \emph{Irregularities of distribution. {VIII}}, Trans. Amer.
  Math. Soc. \textbf{198} (1974), 1--22.

\bibitem[Sch90]{SchweigerJP:90}
F.~Schweiger, \emph{On the invariant measure for {J}acobi-{P}erron algorithm},
  Math. Pannon. \textbf{1} (1990), no.~2, 91--106.

\bibitem[Sch00]{SCHWEIGER}
\bysame, \emph{Multidimensional continued fractions}, Oxford Science
  Publications, Oxford University Press, Oxford, 2000.

\bibitem[Sch01]{Schratzberger:01}
B.~R. Schratzberger, \emph{The quality of approximation of {B}run's algorithm
  in three dimensions}, Monatsh. Math. \textbf{134} (2001), no.~2, 143--157.

\bibitem[Sch04]{Schweiger:04}
F.~Schweiger, \emph{Ergodic and {D}iophantine properties of algorithms of
  {S}elmer type}, Acta Arith. \textbf{114} (2004), no.~2, 99--111.

\bibitem[Sel61]{Selmer:61}
E.~S. Selmer, \emph{Continued fractions in several dimensions}, Nordisk Nat.
  Tidskr. \textbf{9} (1961), 37--43, 95.

\bibitem[Sol97]{Solomyak:97}
B.~Solomyak, \emph{Dynamics of self-similar tilings}, Ergodic Theory Dynam.
  Systems \textbf{17} (1997), no.~3, 695--738.

\bibitem[SS02]{SS:02}
V.~F. Sirvent and B.~Solomyak, \emph{Pure discrete spectrum for one-dimensional
  substitution systems of {P}isot type}, Canad. Math. Bull. \textbf{45} (2002),
  no.~4, 697--710, Dedicated to Robert V. Moody.

\bibitem[ST09]{ST09}
A.~Siegel and J.~M. Thuswaldner, \emph{Topological properties of {R}auzy
  fractals}, M\'em. Soc. Math. Fr. (N.S.) (2009), no.~118, 140 pp.

\bibitem[SW02]{SirventWang02}
V.~F. Sirvent and Y.~Wang, \emph{Self-affine tiling via substitution dynamical
  systems and {R}auzy fractals}, Pacific J. Math. \textbf{206} (2002), no.~2,
  465--485.

\bibitem[Thu89]{Thu89}
W.~Thurston, \emph{Groups, tilings, and finite state automata}, AMS Colloquium
  lecture notes, 1989, Unpublished manuscript.

\bibitem[Thu20]{thuswaldner2019boldsymbolsadic}
J.~M. Thuswaldner, \emph{${S}$-adic sequences. {A} bridge between dynamics,
  arithmetic, and geometry}, Substitution and Tiling Dynamics: Introduction to
  Self-inducing Structures (S.~Akiyama and P.~Arnoux, eds.), Lecture Notes in
  Mathematics, vol. 2273, Springer, Cham, 2020, pp.~97--191.

\bibitem[Tij80]{Tijdeman:80}
R.~Tijdeman, \emph{The chairman assignment problem}, Discrete Math. \textbf{32}
  (1980), no.~3, 323--330.

\bibitem[Tij00]{Tijdeman:2000}
\bysame, \emph{Fraenkel's conjecture for six sequences}, Discrete Math.
  \textbf{222} (2000), no.~1-3, 223--234.

\bibitem[Via14]{Viana}
M.~Viana, \emph{Lectures on {L}yapunov exponents}, Cambridge Studies in
  Advanced Mathematics, vol. 145, Cambridge University Press, Cambridge, 2014.

\bibitem[Wal82]{Walters:82}
P.~Walters, \emph{An introduction to ergodic theory}, Graduate Texts in
  Mathematics, vol.~79, Springer-Verlag, New York, 1982.

\bibitem[Yoc06]{Yoccoz}
J.-C. Yoccoz, \emph{Continued fraction algorithms for interval exchange maps:
  an introduction}, Frontiers in number theory, physics, and geometry. {I},
  Springer, Berlin, 2006, pp.~401--435.

\bibitem[Yur95]{Yuri:95}
M.~Yuri, \emph{Multi-dimensional maps with infinite invariant measures and
  countable state sofic shifts}, Indag. Math. (N.S.) \textbf{6} (1995), no.~3,
  355--383.

\end{thebibliography}
\end{document}